\newcounter{commentcounter}
\renewcommand*{\backref}[1]{}
\renewcommand*{\backrefalt}[4]
{
    \ifcase #1
        No citation in the text.
    \or
        Cited on page #2.
    \else
        Cited on pages #2.
    \fi
}
\newtheorem{thm}{Theorem}[section]
\newtheorem{lemma}[thm]{Lemma}
\newtheorem{corollary}[thm]{Corollary}
\newtheorem{prop}[thm]{Proposition}
\newtheorem{question}[thm]{Question}
\newtheorem{thmx}{Theorem}
\newtheorem{corx}[thmx]{Corollary}
\theoremstyle{definition}
\newtheorem{defn}[thm]{Definition}
\newtheorem{remark}[thm]{Remark}
\theoremstyle{plain}
    \newtheoremstyle{TheoremNum}
        {\topsep}{\topsep} 
        {\itshape} 
        {-0.25cm} 
        {\bfseries} 
        {.} 
        { }  
        {\thmname{#1}\thmnote{ \bfseries #3}}
    \theoremstyle{TheoremNum}
    \newtheorem{duplicate}{}
\newcommand*{\claimproofname}{My proof}
\DeclareMathOperator{\aster}{\text{\LARGE{\textasteriskcentered}}}
\DeclareMathOperator{\Aut}{\mathrm{Aut}}
\DeclareMathOperator{\Out}{\mathrm{Out}}
\DeclareMathOperator{\Fix}{\mathrm{Fix}}
\DeclareMathOperator{\ord}{\mathrm{ord}}
\newcommand{\calc}{{\mathcal{C}}}
\newcommand{\calp}{{\mathcal{P}}}
\newcommand{\GL}{\mathrm{GL}}
\DeclareMathOperator{\id}{id}
\newcommand{\onto}{\twoheadrightarrow}
\newcommand{\TAP}{\mathsf{TAP}}
\newcommand{\rank}{\mathrm{rank}}
\newcommand{\cd}{\mathrm{cd}}
\newcommand{\MC}{\mathrm{MC}}
\DeclareMathOperator{\Char}{\mathrm{Char}}
\def\Z{\mathbb{Z}}
\newcommand{\cgrZ}{\widehat{\Z}\llbracket t^{\widehat{\Z}}\rrbracket}
\newcommand{\NN}{\mathbb{N}}
\newcommand{\ZZ}{\mathbb{Z}}
\newcommand{\RR}{\mathbb{R}}
\newcommand{\QQ}{\mathbb{Q}}
\newcommand{\FF}{\mathbb{F}}
\tikzstyle{blackNode}=[fill=black, draw=black, shape=circle]
\title[On profinite rigidity amongst free-by-cyclic groups]{On profinite rigidity amongst free-by-cyclic groups I: the generic case}
\author{Sam Hughes}
\email{sam.hughes.maths@gmail.com; hughes@math.uni-bonn.de}
\address[S.~Hughes (current)]{Rheinische Friedrich-Wilhelms-Universit\"at Bonn, Mathematical Institute, Endenicher Allee 60, 53115 Bonn, Germany}
\author{Monika Kudlinska}
\email{mak74@cam.ac.uk}
\address[M. Kudlinska (current)]{Emmanuel College, St Andrew's Street, Cambridge CB2 3AP, UK}
\address[S. Hughes and M. Kudlinska (former)]{Mathematical Institute, Andrew Wiles Building, Observatory Quarter, University of Oxford, Oxford OX2 6GG, UK}
\date{\today}
\subjclass[2020]{20E36; 20E18; 20E26 (primary) 20J05; 20J06; 57M07; 20F67; 20F65 (secondary)}
\keywords{Profinite rigidity; free-by-cyclic groups.}
\begin{document}
\begin{abstract}
We prove that amongst the class of free-by-cyclic groups, Gromov hyperbolicity is an invariant of the profinite completion. We show that whenever $G$ is a free-by-cyclic group with first Betti number equal to one, and $H$ is a free-by-cyclic group which is profinitely isomorphic to $G$, the ranks of the fibres and the characteristic polynomials associated to the monodromies of $G$ and $H$ are equal. We further show that for hyperbolic free-by-cyclic groups with first Betti number equal to one, the stretch factors of the associated monodromy and its inverse is an invariant of the profinite completion. We deduce that irreducible free-by-cyclic groups with first Betti number equal to one are almost profinitely rigid amongst irreducible free-by-cyclic groups.  We use this to prove that generic free-by-cyclic groups are almost profinitely rigid amongst free-by-cyclic groups. We also show similar results for \{universal Coxeter\}-by-cyclic groups.
\end{abstract}

\maketitle

\section{Introduction}
Two finitely generated groups $G$ and $H$ are said to be \emph{profinitely isomorphic} if they share the same isomorphism types of finite quotient groups.  It is a classical result that if two groups are profinitely isomorphic then they have the same profinite completion \cite{DixonFormanekPolandRibes1982}.  For a class $\calc$ of finitely generated residually finite groups, a group $G\in\calc$ is \emph{profinitely rigid in $\calc$} if any group $H$ in $\calc$ profinitely isomorphic to $G$ is in fact isomorphic to $G$.  Similarly, we say $G$ is \emph{almost profinitely rigid in $\calc$} if there are at most finitely isomorphism types of groups $H$ in $\mathcal{C}$ profinitely isomorphic to $G$.

There exists a large body of work investigating profinite rigidity of $3$-manifold groups.  For example, deep work of Bridson--McReynolds--Reid--Spitler shows that there are hyperbolic $3$-manifolds which are profinitely rigid amongst all finitely generated residually finite groups \cite{BridsonMcReynoldsReidSpitler2020}, with more examples constructed in \cite{CheethamWest2022} and \cite{BridsonReid2022}.  On the other hand, there exist Anosov torus bundles and periodic closed surface bundles with non-isomorphic but profinitely isomorphic fundamental groups \cite{Stebe1972,Funar2013,Hempel2014}.

Significant progress has been made on the problem of profinite rigidity \emph{within} the class of 3-manifolds. A key step in showing that various classes and properties of 3-manifolds are invariants of the profinite completion is to establish the profinite invariance of fibring. In this vein, and in order to deduce results about the profinite completion of knot groups, Bridson--Reid studied profinite invariants of compact 3-manifolds with boundary and first Betti number equal to one, in particular showing that fibring and the rank of the fibre is a profinite invariant of such 3-manifolds \cite{BridsonReid2020}. At the same time, Boileau--Friedl tackled the problem of profinite invariants of knot groups by showing that fibring is an invariant of 3-manifolds whose profinite completions are related by a particular type of isomorphism, called a \emph{regular isomorphism} \cite{BoileauFriedl2020}. Finally, Jaikin-Zapirain showed that being fibred is a profinite invariant of all $3$-manifold groups \cite{Jaikin2020}, and this was generalised to all LERF groups in \cite{HughesKielak2022}.

Another crucial element is the work Wilton--Zalesskii on profinite detection of Thurston geometries \cite{WiltonZalesskii2017} and of Wilkes and Wilton--Zalesskii on profinite invariance of various decompositions of 3-manifolds \cite{Wilkes2018,Wilkes2018b,WiltonZalesskii2019}. The case of Seifert fibred manifolds was entirely solved by Wilkes, who proved that these are almost profinitely rigid in the class of all 3-manifold groups \cite{Wilkes2017}.  Graph manifolds have received much attention too \cite{WiltonZalesskii2010,Wilkes2018b,Wilkes2019}. Most recently, Liu proved the spectacular theorem that finite volume hyperbolic $3$-manifold groups are almost profinitely rigid \cite{Liu2023}.  Other results have also been obtained, e.g. \cite{BridsonReidWilton2017,WiltonZalesskii2017b,BoileauFriedl2020,Zalesskii2022,Liu2023b}.

We say a group $G$ is \emph{free-by-cyclic} if it contains a normal subgroup $N \trianglelefteq G$ which is isomorphic to a non-trivial free group of finite rank $F_n$, and such that $G/ N \cong \Z$. We will almost always think of a free-by-cyclic group as a pair $(G, \varphi)$, where $\varphi \in \mathrm{Hom}(G; \Z)$ is an epimorphism which gives rise to a short exact sequence
\[1 \to F_n \to G \xrightarrow{\varphi}  \Z \to 1.\]
Since any such short exact sequence splits, one can realise a free-by-cyclic group as the semi-direct product $G \cong F_n \rtimes_{\Phi} \Z$, for some $\Phi \in \mathrm{Out}(F_n)$ which we refer to as the \emph{monodromy} of the splitting. Conversely, given a semi-direct splitting $G \cong F_n \rtimes_{\Phi} \Z$ there's an associated character $\varphi \colon G \to \Z$ which maps the normal free factor to zero, and the stable letter (with respect to any choice of representative of $\Phi$) to the generator 1 of $\Z$. We call this the \emph{induced character} of the splitting $F_n \rtimes_{\Phi} \Z$.

Free-by-cyclic groups form a well-studied class which has been shown to exhibit many key properties; these include residual finiteness \cite{Baumslag1971}, quadratic isoperimetric inequality \cite{BridsonGroves2010}, and the property of being large \cite{Button2013}. Further, it is known that hyperbolic free-by-cyclic groups are cubulable \cite{HagenWise2015} and thus virtually compact special in the sense of Haglund--Wise \cite{HaglundWise2008}, and more generally that all free-by-cyclic groups which do not virtually split as a direct product admit non-elementary acylindrical actions on hyperbolic spaces \cite{GenevoisHorbez2021}. Despite this, there are still many open questions in this area, most notably on the subject of rigidity, even when one considers only rigidity amongst the class of free-by-cyclic groups.

Our goal in writing this paper is to investigate profinite rigidity amongst free-by-cyclic groups. The study of profinite invariants of free-by-cyclic groups saw its inception in the work of Bridson--Reid  \cite{BridsonReid2020}. Although the aim of their work was to prove results about fibred knot complements, their methods apply more generally and are later used by Bridson--Reid--Wilton \cite{BridsonReidWilton2017} to show profinite rigidity amongst the groups of the form $F_2 \rtimes \Z$.  

Whilst we draw inspiration from the results in the $3$-manifold setting, the problem for free-by-cyclic groups is significantly more subtle. This stems in part from the lack of a sufficient $\mathrm{Out}(F_n)$-analogue of the Nielsen--Thurston decomposition for homeomorphisms of finite-type surfaces. One artefact of this is that we frequently have to restrict our attention to the class of \emph{irreducible} free-by-cyclic groups, that is free-by-cyclic groups which admit irreducible monodromy. Recall that an outer automorphism $\Phi \in \mathrm{Out}(F_n)$ is \emph{irreducible} if there does not exist a free splitting \mbox{$F_n=A_1 \ast \ldots \ast A_k \ast B$,} where $A_1 \ast \ldots \ast A_k$ is non-trivial, and such that $\Phi$ permutes the conjugacy classes of the factors $A_i$. By the work of Mutanguha \cite{Mutanguha2021}, for any two realisations of $G$ as a free-by-cyclic group, $G \cong F_n \rtimes_{\Phi} \Z \cong F_m \rtimes_{\Psi} \Z$, the monodromy $\Phi$ is irreducible if and only if $\Psi$ is. 

Our first result is analogous to Liu's theorem with the additional hypotheses that $b_1(G)=1$ and restricting to the class of irreducible free-by-cyclic groups.  The first hypothesis is due to the fact that we do not have a method to establish $\widehat{\Z}$-regularity (see \Cref{sec:regularity} for a definition) without an analogous result to the main theorems in \cite{FriedlVidussi2008,FriedlVidussi2011annals} --- this is one of the main technical steps in Jaikin-Zapirain's and Liu's results.  The second hypothesis arises since, although we can show that hyperbolicity of free-by-cyclic groups is a profinite invariant, we are currently unable to show the same holds true for irreducibility.

\medskip
\begin{duplicate}[\Cref{thmx:Irr}]
    Let $G$ be an irreducible free-by-cyclic group.  If $b_1(G)=1$, then $G$ is almost profinitely rigid amongst irreducible free-by-cyclic groups.
\end{duplicate}
\medskip

\subsection{Profinite invariants}
The next theorem is somewhat more technical.  We will not include definitions of the invariants in the introduction, but many of them will be familiar to experts and they are scattered throughout the paper.  Note that the result actually holds in the more general setting of a $\widehat{\Z}$-regular isomorphism (the specific results stated throughout the paper comprising \Cref{thmx:invariants} are stated in this generality, in fact we provide a restatement of \Cref{thmx:invariants} later in the text in this generality). 

We point out the general fact that the first Betti number of any finitely generated discrete group is an invariant of its profinite completion.

\medskip
\begin{duplicate}[\Cref{thmx:invariants}]
    Let $G=F\rtimes_{\Phi} \Z$ be a free-by-cyclic group with induced character $\varphi \colon G \to \Z$. If $b_1(G)=1$, then the following properties are determined by the profinite completion $\widehat{G}$ of $G$: \begin{enumerate}
        \item the rank of $F$;
        \item the homological stretch factors $\{\nu^+_G,\nu^-_G\}$;
        \item the characteristic polynomials $\{\Char{\Phi^+},\Char{\Phi^-}\}$ of the action of $\Phi$ on $H_1(F;\QQ)$;
        \item for each representation $\rho\colon G\to \GL(n, \QQ)$ factoring through a finite quotient, the twisted Alexander polynomials $\{\Delta^{\varphi,\rho}_n,\Delta^{-\varphi,\rho}_n\}$ and the twisted Reidemeister torsions $\{\tau^{\varphi,\rho},\tau^{-\varphi,\rho}\}$ over $\QQ$.
    \end{enumerate}
    Moreover, if $G$ is conjugacy separable, (e.g. if $G$ is hyperbolic), then $\widehat G$ also determines the Nielsen numbers and the homotopical stretch factors $\{\lambda^+_G,\lambda^-_G\}$.
\end{duplicate}
\medskip

We note that our Theorem~B(1) was already known by the work of Bridson--Reid \cite[Lemma 3.1]{BridsonReid2020}. 

The reason for obtaining a set of invariants corresponding to $\Phi$ and $\Phi^{-1}$ is that the dynamics of $\Phi$ and $\Phi^{-1}$ can be different.  Indeed, this is somewhat a feature of free-by-cyclic groups rather than a bug.  A large technical hurdle in this work was overcoming this phenomenon which cannot occur for $3$-manifolds.

We also obtain a complete geometric picture \`a la Wilton--Zalesskii in the case of hyperbolic free-by-cyclic groups.

\medskip
\begin{duplicate}[\Cref{thmx:hyperbolicity}]
     Let $G_A$ and $G_B$ be profinitely isomorphic free-by-cyclic groups. Then $G_A$ is Gromov hyperbolic if and only if $G_B$ is Gromov hyperbolic.
\end{duplicate}
\medskip

\subsection{Almost profinite rigidity and applications}
We will now explain how to apply \Cref{thmx:Irr}, \Cref{thmx:invariants}, and \Cref{thmx:hyperbolicity} to various classes of free-by-cyclic groups to obtain strong profinite rigidity phenomena.

\subsubsection{Super irreducible free-by-cyclic groups}
We say that a free-by-cyclic group $G$ is \emph{super irreducible}, if $G \cong F_n \rtimes_{\Phi} \Z$ and the integral matrix $M\colon H_1(F_n;\QQ)\to H_1(F_n;\QQ)$ representing the action of $\Phi$ on $H_1(F_n;\QQ)$ satisfies the property that no positive power of $M$ maps a proper subspace of $H_1(F_n;\QQ)$ into itself.  Note that this immediately implies $b_1(G)=1$ because 
\[ H_1(G; \QQ) \cong \left( H_1(F_n; \QQ) / \mathrm{Im}(M - \mathrm{Id}) \right)\oplus \QQ ,\]
and since $M$ is super irreducible, $\ker (M- \mathrm{Id}) = \{0\}$. Super irreducibility also implies $G$ is irreducible by \cite[Theorem~2.5]{GerstenStallings1991}. 

An example of a super irreducible free-by-cyclic group is  whenever the characteristic polynomial of $M$ is a \emph{Pisot--Vijayaraghavan polynomial}, namely, it is monic, it has exactly one root (counted with multiplicity) with absolute value strictly greater than one, and all other roots have absolute value strictly less than one \cite{GerstenStallings1991}.

\medskip
\begin{duplicate}[\Cref{corx:PV}]
Let $G$ be a super irreducible free-by-cyclic group.  Then every free-by-cyclic group profinitely isomorphic to $G$ is super irreducible.  In particular, $G$ is almost profinitely rigid amongst free-by-cyclic groups.
\end{duplicate}
\medskip

\subsubsection{Random free-by-cyclic groups}

Fix $n \geq 2$ and let $S$ be a finite generating set of $\mathrm{Out}(F_n)$. For any $l \geq 1$, define $\mathcal{H}_{l,n}$ to be the set of all free-by-cyclic groups 
$G$ which admit a splitting $G \cong F_n \rtimes_{\Phi} \Z$, where $\Phi$ can be expressed as a word of length at most $l$ in $S$. We say that for a \emph{random} free-by-cyclic group the property $P$ holds \emph{asymptotically almost surely}, or that a \emph{generic free-by-cyclic group satisfies property $P$}, if 
\[ \frac{\#\{G \in \mathcal{H}_{l,n} \mid G \text{ satisfies property }P\} }{\#\mathcal{H}_{l,n}} \to 1 \text{ as }l\to \infty.\]

We now state the result alluded to in the title of the paper.

\medskip
\begin{duplicate}[\Cref{corx:generic}]
    Let $G$ be a random free-by-cyclic group.  Then, asymptotically almost surely $G$ is almost profinitely rigid amongst free-by-cyclic groups.
\end{duplicate}
\medskip

\subsubsection{Low rank fibres}
When the fibre of the free-by-cyclic group has rank two or three we are able to obtain rigidity statements within the class of all free-by-cyclic groups.

\medskip
\begin{duplicate}[\Cref{corx:F3}]
Let $G=F_3\rtimes\Z$.  If $G$ is hyperbolic and $b_1(G)=1$, then $G$ is almost profinitely rigid amongst free-by-cyclic groups.
\end{duplicate}
\medskip

Note in the next statement we see that $G$ is uniquely determined.

\medskip
\begin{duplicate}[\Cref{corx:F2}]
     Let $G=F_2\rtimes\Z$.  If $b_1(G)=1$, then $G$ is profinitely rigid amongst free-by-cyclic groups.
\end{duplicate}
\medskip

\subsubsection{Profinite conjugacy}
Our next result investigates conjugacy in $\Out(\widehat{F}_n)$ and is somewhat analogous to \cite[Theorem~1.2]{Liu2023b}.  We say two outer automorphisms $\Psi$ and $\Phi$ of $F_n$ are \emph{profinitely conjugate} if they induce a conjugate pair of outer automorphisms in $\Out(\widehat{F}_n)$.  In this setting we have no assumption on the action of $\Psi$ or $\Phi$ on the homology of $F_n$.

\medskip
\begin{duplicate}[\Cref{thmx:procongruence}]
Let $\Psi\in\Out(F_n)$ be atoroidal.  If $\Phi\in\Out(F_n)$ is profinitely conjugate to $\Psi$, then $\Phi$ is atoroidal and $\{\lambda_\Psi,\lambda_{\Psi^{-1}}\}=\{\lambda_\Phi,\lambda_{\Phi^{-1}}\}$.  In particular, if $\Psi$ is additionally irreducible, then there are only finitely many $\Out(F_n)$-conjugacy classes of irreducible automorphisms which are conjugate with $\Psi$ in $\Out(\widehat{F}_n)$.
\end{duplicate}
\medskip

\subsubsection{Automorphisms of universal Coxeter groups} 
Finally, we extend our results to the setting of universal Coxeter groups. A group $G$ is \emph{\{universal Coxeter\}-by-cyclic} if it splits as a semi-direct product $W_n \rtimes \Z$ where $W_n = \bigast_{i=1}^n \Z /2$ is the free product of $n$ copies of $\Z / 2$. A \emph{free basis} of $W_n$ is a generating set for $W_n$ such that each element has order 2.

Let $K \leq W_n$ be the unique torsion-free subgroup of index 2. For any choice of free basis for $W_n$, $K$ is the kernel of the homomorphism $W_n \to \Z/2$ which maps every free generator of $W_n$ to 1. We note that $K$ is characteristic and it is isomorphic to the free group of rank $n-1$. 

Fix a free basis of the free group $F_n$ of rank $n$, and let $\iota \in \mathrm{Aut}(F_n)$, denote the automorphism which inverts each basis element. Let $[\iota]$ be the image of $\iota$ in $\mathrm{Out}(F_n)$. Following \cite{BregmanFullarton2018}, we define the group of \emph{hyperelliptic outer automorphisms} of $F_n$, denoted by $\mathrm{HOut}(F_n)$, to be the centraliser of $[\iota]$ in $\mathrm{Out}(F_n)$.

\medskip
\begin{duplicate}[\Cref{thmx:CoxeterStretchInvariance}]
    Let $G = W \rtimes \Z$ be a \{universal Coxeter\}-by-cyclic group. Then the rank of the fibre $W$ is an invariant of $\widehat{G}$. 

    Suppose that all free-by-cyclic groups with monodromy in $\mathrm{HOut}(F_n)$ for some $n$, are conjugacy separable. Then $\widehat{G}$ determines the the stretch factors $\{
    \lambda^{+}, \lambda^{-}\}$ associated to the monodromy of the splitting $W \rtimes \Z$.
\end{duplicate}
\medskip

\subsection{Some unanswered questions}
While we began in earnest to transport the programme of profinite rigidity amongst $3$-manifold groups to free-by-cyclic groups, we have perhaps raised as many questions as answers.  We will highlight some key questions that we have encountered and hope to answer in the future.
Perhaps the most pressing issue is that of $\widehat{\Z}$-regularity. 

\begin{question}\label{q.Zregular}
    Is every profinite isomorphism of free-by-cyclic groups $\widehat{\Z}$-regular?
\end{question}

 One may hope to answer the previous question as in \cite{Liu2023}, but using the agrarian polytope \cite{HennekeKielak2021,Kielak2020polytopes} in place of the Thurston polytope.  The key issue is that we do not have the $\TAP_1$ property for free-by-cyclic groups (for $3$-manifolds this is a deep result of Friedl--Vidussi \cite{FriedlVidussi2008,FriedlVidussi2011annals}).  The reader is referred to  \cite[Definition~3.1]{HughesKielak2022} for the definition due to its technical nature.
 
\begin{question}\label{q.TAP1}
    Is every free-by-cyclic group $G$ in $\mathsf{TAP}_1(\FF)$ for $\FF\in\{\QQ,\FF_p\}$ with $p$ prime? 
\end{question}

The other somewhat obvious question is whether irreducibility is a profinite invariant.  We expect this to be the case (at least amongst hyperbolic free-by-cyclic groups).

\begin{question}\label{q.irreducibility}
    Is being irreducible a profinite invariant amongst free-by-cyclic groups?
\end{question}

Our final question is motivated by \cref{thmx:CoxeterStretchInvariance}.

\begin{question}\label{q.conjugacySep}
    Is it true that for every hyperelliptic outer automorphism $\Phi \in \mathrm{HOut}(F_n)$, the mapping torus $G = F_n \rtimes_{\Phi} \Z$ is conjugacy separable?
\end{question}

\subsection{Structure of the paper}
In \Cref{prelim:autos} we recall the necessary background on free group automorphisms and free-by-cyclic groups and prove a number of results we will need throughout the paper.  

In \Cref{TopRep} we recall the definition of a topological representative of a free group automorphism, its stretch factor and the various definitions of irreducibility we will need.  We include a proof that there are at most finitely many equivalence classes of irreducible topological representatives such that the graph has rank $n$ and the stretch factor is at most some positive real number $C>1$ (\Cref{min}).  

In \Cref{sec:generic} we study generic outer automorphisms of free groups and prove that a generic free-by-cyclic group has first Betti number equal to one and is super irreducible (\Cref{generic_properties}).  

In \Cref{sec:Nielson} we relate the Nielsen numbers of an outer automorphism of a free group to the stretch factor of the outer automorphism.

In \Cref{sec:hyp} we study certain subgroup separability properties of free-by-cyclic groups.  In particular, we show that every abelian and every free-by-cyclic subgroup is separable (\Cref{fully_separable}).  We combine this with results of Wilton--Zalesskii \cite{WiltonZalesskii2017} to prove \Cref{thmx:hyperbolicity} from the introduction.

In \Cref{sec:AP} we recall the definitions of twisted Alexander polynomials and twisted Reidemeister torsions.  We establish a number of facts about twisted Alexander polynomials which we will use later in the paper.  Our main new contribution is \emph{a complete calculation of the zeroth twisted Alexander polynomials} over $\QQ$ for any finitely generated group (\Cref{zeroth AP palindromic}), as well as a formula for the twisted Reidemeister torsion of a free-by-cyclic group in terms of the twisted Alexander polynomials.

In \Cref{sec:regularity} we recall the notion of a matrix coefficient module and a $\widehat{\Z}$-regular isomorphism.  The main reason for this section is to allow us to work in the generality of a $\widehat{\Z}$-regular isomorphism. This means that if one established a positive answer to \Cref{q.Zregular} then one could apply the results in this paper without any further modifications.  

At this stage, we establish some notation.  Let $G_A$ be a free-by-cyclic group with character $\psi$ and fibre subgroup $F_A$.  Also let $G_B$ be a free-by-cyclic group with character $\varphi$ and fibre subgroup $F_B$.  Let $\Theta\colon \widehat{G}_A\to\widehat{G}_B$ be a $\widehat{\Z}$-regular isomorphism (see \Cref{defn:Zhatreg}).  Our final result of the section is that $F_A\cong F_B$ (\Cref{fibre iso}).

In \Cref{sec:Rtorsion} we set out to prove profinite invariance of Reidemeister torsion over $\QQ$ twisted by representations of finite quotients for $G_A$ and $G_B$.  Our strategy is parallel to that of Liu \cite[Section~7]{Liu2023}, however due to the extra complexity of free-by-cyclic groups we have to invoke extra results about twisted Alexander polynomials of free-by-cyclic groups established in \Cref{sec:AP}.  In \Cref{sec:AP.profinite} we prove profinite invariance of the twisted Alexander polynomials although we work in the more general setting of $\{$good type $\mathsf{F}\}$-by-$\Z$ groups and $\widehat{\Z}$-regular isomorphisms.  In \Cref{sec:profRtorsion} we establish the profinite invariance of twisted Reidemeister torsion for $G_A$ and $G_B$.  In \Cref{sec.homostretch} we prove that the homological stretch factors $\{\nu_A,\nu_{A}^{-}\}$ and $\{\nu_B,\nu_{B}^{-}\}$ are equal.

In \Cref{sec:profNielson}, under the assumption of conjugacy separability of $G_A$ and $G_B$ we prove that the homotopical stretch factors $\{\lambda_A,\lambda_A^{-}\}$ and $\{\lambda_B,\lambda_{B}^{-}\}$ are equal.  Again our strategy is largely motivated by \cite[Section~8]{Liu2023}.  The key difference is that for a fibred character $\chi$ on a finite volume hyperbolic $3$-manifold the stretch factors of $\chi$ and $\chi^{-1}$ are the same.  This is not true for free-by-cyclic groups where we must deal with \emph{both directions at once}\footnote{John Coltrane - The Lost Album.} and so our main work is resolving this issue.

Combining the major results up to this point proves \Cref{thmx:invariants}.

In \Cref{sec.proofmain} we prove \Cref{thmx:Irr}.  In the hyperbolic case this is a corollary of \Cref{thmx:invariants} and the fact that hyperbolic free-by-cyclic groups are virtually special and hence conjugacy separable.  In the general case we apply a result of Mutanguha \cite{Mutanguha2021} and train track theory to deduce the conjugacy separability we require.  We then go on to deduce Corollaries~\ref{corx:PV}-\ref{corx:F2}.

In \Cref{sec.proconjugacy} we prove \Cref{thmx:procongruence}.  This is really an easy consequence of \Cref{thmx:invariants} once we transport a result of Liu \cite[Proposition~3.7]{Liu2023b} on profinite conjugacy of mapping class groups to the $\Out(F_n)$ setting.

Finally, in \Cref{sec:Wn} we prove results on profinite invariants and profinite almost rigidity of \{universal Coxeter\}-by-cyclic groups. To do so, we start by establishing notation and recalling background on morphisms of graphs of groups in \Cref{sec:graphofgroups}. The purpose of \Cref{sec:traintrackWn} is to relate the theory of train track representatives of elements in $\mathrm{Out}(W_n)$ with Nielsen fixed point theory. We also prove a lemma on irreducibility of covers of directed graphs and use this to relate the stretch factor of an outer automorphism $\Phi \in \mathrm{Out}(W_n)$ with the stretch factor of the free group automorphism obtained by restricting $\Phi$ to a free characteristic subgroup of $W_n$. In the final \Cref{sec:ProfiniteCoxeter} we combine results from previous sections to prove \Cref{thmx:CoxeterStretchInvariance}.

\subsection*{Acknowledgements}
We would like to extend a big thank you to Naomi Andrew, Martin Bridson, and especially Dawid Kielak for a number of helpful conversations.  

We would like to thank Martin Bridson, Francesco Fournier Facio, Dawid Kielak, Jean Pierre Mutanguha, and Sam Taylor for helpful comments on an earlier version of this article.

We would like to thank the anonymous referees for carefully reading the article and providing a number of helpful comments.

This work has received funding from the European Research Council (ERC) under the European Union's Horizon 2020 research and innovation programme (Grant agreement No. 850930). The second author was supported by an Engineering and Physical Sciences Research Council studentship (Project Reference 2422910).

\newpage
\subsection{Notation}
We include a table of notation for the aid of the reader.

\begin{table}[h]
    \centering
    \begingroup
    \renewcommand{\arraystretch}{1.2}
\begin{tabular}{|c|p{10cm}|}
\hline
Symbol & Definition\\
\hline
    $F_n$ & Free group of rank $n$\\
    $W_n$ & Universal Coxeter group of rank $n$, that is, $\aster_{i=1}^n\Z/2$\\
    \hline
    $\Gamma$ & Graph \\
    $(\Gamma, \mathcal{G})$, $\mathcal{G}$ & Graph of groups \\
    $X_{\mathcal{G}}$ & Graph of spaces \\
    $(f, f_X), f$ & Morphism of graphs of groups \\
     \hline
    $\psi$, $\varphi$, $\psi_A$, $\varphi_B$ & Character of a free-by-cyclic group\\
    $(G_A,\psi$), $(G_B,\varphi)$ & Free-by-cyclic group \\
    $F$, $F_A$, $F_B$ & Fibre subgroup \\
    $\Psi$, $\Phi$ & Outer automorphisms (of $F_n$ or $W_n$)\\
    $f$, $f_A$, $f_B$ & Train track \\
    $\mathrm{Orb}_m(f)$  & Set of $m$-periodic orbits of $f$\\
    $N_m(f)$ & $m$th Nielsen number of $f$\\
    $\lambda$, $\lambda_f$, $\lambda_\Psi$ & Homotopical stretch factor (of $f$ or $\Psi$) \\
    $\nu$, $\nu_f$, $\nu_\Psi$ & Homological stretch factor (of $f$ or $\Psi$)\\
    \hline
    $R$ & Unique factorisation domain\\
    $R^\times$ & Units of $R$\\
    $\Delta_{R,n}^{\varphi,\alpha}$ & $n$th Alexander polynomial of $\varphi$ twisted by $\alpha$ over $R$\\
    $\tau_R^{\varphi,\alpha}$ & Reidemeister torsion of $\varphi$ twisted by $\alpha$ over $R$\\
    \hline
    \multicolumn{2}{|p\textwidth|}{In some contexts we will drop the $R$ from the previous notations and replace it with a group $G$ for clarity, that is, $\Delta_{G,n}^{\varphi,\alpha}$ and $\tau_G^{\varphi,\alpha}$ or even $\tau_{G,R}^{\varphi,\alpha}$}\\
    \hline
    $\alpha$, $\beta$, $\gamma$ & Finite quotients \\
    $Q$ & Image of a finite quotient \\
    $\rho$, $\sigma$ & Representation of a group\\
    $\chi_\rho$ & Character of the representation $\rho$\\
    $\gamma^\ast(\sigma)$ & Pullback representation of $\sigma$ along $\gamma$\\
    $\mathbf{1}$ & The trivial representation\\
    \hline
    $\Theta$ & Profinite isomorphism\\
    $\MC(\Theta)$ & Mapping coefficient module\\
     $\Theta_\ast^\epsilon$, $\Theta^\ast_\epsilon$ & $\epsilon$-specialisation of $\Theta$\\
     $\mu$ & Unit of $\widehat{\Z}$\\
     \hline
\end{tabular}
\endgroup
    \caption{Table of notation.}
    \label{tab:notation}
\end{table}

\pagebreak

\section{Preliminaries on free group automorphisms}\label{prelim:autos}

\subsection{Topological representatives}\label{TopRep}

The contents of this subsection largely derive from the work of Bestvina--Handel in \cite{BestvinaHandel1992}. Let $n \geq 2$ and $\Phi \in \mathrm{Out}(F_n)$ be an outer automorphism of $F_n$.  A \emph{topological representative} of $\Phi$ is a tuple $(f, \Gamma)$, where $\Gamma$ is a connected graph with $\pi_1(\Gamma)\cong F_n$, and $f \colon \Gamma \to \Gamma$ is a homotopy equivalence which induces the outer automorphism $\Phi$. Furthermore, $f$ preserves the set of vertices of $\Gamma$ and it is locally injective on the interiors of the edges of $\Gamma$. A topological representative $f$ is said to be a \emph{train track} if every positive power of $f$ is locally injective on the interiors of edges.

Fix an ordering of the edges of $\Gamma$. The \emph{incidence matrix} $A$ of $f$ is the matrix with entries $a_{ij}$, where $a_{ij}$ is the number of occurrences of the unoriented edge $e_j$ in the edge-path $f(e_i)$.

Recall that a non-negative integral $n$-by-$n$ square matrix $M$ is said to be \emph{irreducible}, if for any $i,j \leq n$, there exists some $k\geq 1$ such that the $(i,j)$-th entry of $M^k$ is positive.

Let $(f, \Gamma)$ be a topological representative. A \emph{filtration of length l} of $(f, \Gamma)$ is a sequence of subgraphs
\begin{equation}\label{filtration} \emptyset = \Gamma_0 \subseteq \Gamma_1 \subseteq \ldots \subseteq \Gamma_l = \Gamma,\end{equation}
so that $f(\Gamma_i) \subseteq \Gamma_i$ for all $i$. The closure $S_i = \mathrm{Cl}(\Gamma_i \setminus \Gamma_{i-1})$ is called the \emph{i}th \emph{stratum} of the filtration. Re-order the edges of $\Gamma$ so that whenever $i < j$, the edges in $\Gamma_i$ precede the edges in $\Gamma_j$.  The filtration is said to be \emph{maximal} if the square submatrix $A_i$ of the incidence matrix $A$ which corresponds to the $i$-th stratum is either the zero matrix, or it is irreducible. It is a standard fact that any topological representative admits a maximal filtration which is unique up to reordering of the strata. If $(f, \Gamma)$ admits a maximal filtration of length one then we say that $(f, \Gamma)$ is \emph{irreducible.}

By the Perron--Frobenius theorem (see Chapter~2 in \cite{Seneta2006}), if $A_i$ is the submatrix of the incidence matrix $A$ of $(f, \Gamma)$ which corresponds to an irreducible stratum $S_i$, then the spectral radius $\rho(A_i)$ of $A_i$ is an eigenvalue of $A_i$, which is known as the \emph{Perron--Frobenius eigenvalue} of $A_i$. Furthermore, $\rho(A_i) \geq 1$ and equality holds exactly when $A_i$ is a permutation matrix. We call $S_i$ an \emph{exponentially-growing stratum} if its Perron--Frobenius eigenvalue is strictly greater than one. An edge $e$ of $\Gamma$ is said to be \emph{exponentially growing} if it is contained in some exponentially growing stratum. For a topological representative $(f, \Gamma)$, we write $\lambda_f$, (or $\lambda$ if there is no potential for confusion), to denote the maximal Perron--Frobenius eigenvalue taken over all the non-zero strata of the maximal filtration of $(f, \Gamma)$, and we call it the \emph{(homotopical) stretch factor} of $f$.

A subgraph is \emph{non-trivial} if it has a component which is not a vertex. An outer automorphism $\Phi \in \mathrm{Out}(F_n)$ is \emph{irreducible}, if every topological representative $(f, \Gamma)$ of $\Phi$, where $\Gamma$ has no valence-one vertices and no non-trivial $f$-invariant forests, is irreducible. A free-by-cyclic group $G$ is \emph{irreducible}, if $G$ admits a splitting $G \cong F_n \rtimes_{\Phi} \mathbb{Z}$, with $\Phi \in \mathrm{Out}(F_n)$ irreducible. Note that by \cite{Mutanguha2021}, if $G$ is irreducible then the monodromy associated to every fibred splitting of $G$ is an irreducible outer automorphism.

The \emph{stretch factor} of an irreducible outer automorphism $\Phi$ is the infimum of the stretch factors of the irreducible topological representatives of $\Phi$. By the proof of Theorem~1.7 in \cite{BestvinaHandel1992}, the infimum is realised. We will write $\lambda(\Phi)$ to denote the stretch factor of $\Phi$.

\begin{lemma}\label{min}
Let $n \geq 2$ and $C > 1$. There exist at most finitely many conjugacy classes of irreducible elements in $\mathrm{Out}(F_n)$ with stretch factor at most $C$.
\end{lemma}

\begin{proof}

    Let $\mathrm{CV}_n$ denote the Culler--Vogtmann Outer space. For $\epsilon > 0$, write $\mathrm{CV}_n(\epsilon)$ to denote the \emph{thick part} of $\mathrm{CV}_n$, which is defined as the set of all metric graphs $\Gamma$ in $\mathrm{CV}_n$ such that the length of every loop $\alpha$ in $\Gamma$ satisfies $\ell_{\Gamma}(\alpha) \geq \epsilon$. We consider $\mathrm{CV}_n$ as a metric space with the Lipschitz metric. 

     Let $\{\Phi_i\}_{i\in I}$ be a collection of irreducible elements in $\mathrm{Out}(F_n)$ which are non-pairwise conjugate, and such that $\lambda(\Phi_i) \leq C$ for each $i \in I$. Suppose first that $\lambda(\Phi_i) = 1$ for all $i \in I$. Then each $\Phi_i$ has finite order in $\mathrm{Out}(F_n)$. Every finite order element in $\mathrm{Out}(F_n)$ is induced by a periodic automorphism of a graph with no valence-one and valence-two vertices. In particular, there are finitely many finite order elements in $\mathrm{Out}(F_n)$ and hence $I$ is finite.

     Suppose now that some $\Phi_i$ has infinite order. Without loss of generality, we may assume that every $\Phi_i$ has infinite order. Let $\epsilon = 1/ ((3n-3)(C+1)^{3n-2})$. By \cite[Proposition~2.14]{FrancavigliaMartinoSyrigos2021}, each axis of $\Phi_i$ is contained in the $\epsilon$-thick part $\mathrm{CV}_n(\epsilon)$. 
     
     Since action of $\mathrm{Out}(F_n)$ on the thick part $\mathrm{CV}_n(\epsilon)$ is cocompact, there exists some compact subset $K \subseteq \mathrm{CV}_n(\epsilon)$ such that $\bigcup_{g \in \mathrm{Out}(F_n)} g\cdot K = \mathrm{CV}_n(\epsilon)$. Thus, for each $i \in I$ there is an element $\Psi_i \in \mathrm{Out}(F_n)$ which is conjugate to $\Phi_i$ and such that $\mathrm{Axis}(\Psi_i) \cap K \neq \emptyset$. Let $N_{\log C}(K)$ denote the $(\log C)$-neighbourhood of $K$ in $\mathrm{CV}_n(\epsilon)$. Then, $\Psi_i \cdot N_{\log C}(K) \cap N_{\log C}(K) \neq \emptyset$ for all $i \in I$. Since the thick part $\mathrm{CV}_n(\epsilon)$ is proper, we have that $N_{\log C}(K)$ is a compact subset. Hence, since the action of $\mathrm{Out}(F_n)$ on $\mathrm{CV}_n(\epsilon)$ is proper, it must be the case that $I$ is finite.
\end{proof}

A \emph{bounded} topological representative $(f, \Gamma)$ of $\Phi \in \mathrm{Out}(F_n)$ is such that the number of exponentially growing strata is bounded by $3n - 3$, and each exponential stratum stretch factor which is the Perron--Frobenius eigenvalue of an irreducible square matrix of dimensions bounded above by $3n -3$. For a general outer automorphism $\Phi$, we define the stretch factor $\lambda(\Phi)$ of $\Phi$ to be the infimum, taken over all the bounded topological representatives $(f, \Gamma)$ of $\Phi$, of $\lambda_{\mathrm{max}}$, where $\lambda_{\mathrm{max}}$ denotes the maximum stretch factor of the non-zero strata in a maximal filtration of $(f, \Gamma)$. The infimum $\Lambda(\Phi)$ is realised by a bounded relative train track representative $(f, \Gamma)$ (see \cite[p.37]{BestvinaHandel1992}).

\subsection{Generic elements of \texorpdfstring{$\mathrm{Aut}(F_n)$}{Aut(Fn)}}\label{sec:generic}

Fix a finite generating set $S$ of $\mathrm{Aut}(F_n)$. For each $l \geq 1$, let $\mathcal{W}_{l,n}$ denote the set of reduced words of length $l$ in $S$. We say that a \emph{random element of $\mathrm{Aut}(F_n)$ satisfies property $P$ with probability $p$,} if
\[\frac{\#\{w \in \mathcal{W}_{l,n} \mid w \text{ satisfies }P\}}{\#\mathcal{W}_{l,n}} \to p \text{ as }l \to \infty.\]
We say that a \emph{generic element in $\mathrm{Aut}(F_n)$ has property $P$}, if a random element satisfies property $P$ with probability $p = 1$.

An automorphism $\phi \in \mathrm{Aut}(F_n)$ is said to be \emph{super irreducible} if no positive power of the induced map $\phi_{\mathrm{ab}} \in \mathrm{GL}(n, \QQ)$ maps a proper subspace of $H_1(F_n;\QQ)$ into itself.  A free-by-cyclic group $G$ is \emph{super irreducible} if there exists some splitting $G \cong F_n \rtimes_{\phi} \Z$ such that $\phi$ is super irreducible.

The following theorem is a consequence of the results in Section~7 of \cite{Rivin2008}, which hold verbatim after replacing $\mathrm{SL}(n, \Z)$ by $\mathrm{GL}(n, \mathbb{Z})$ in all the statements.

\begin{thm}~\cite{Rivin2008}\label{Rivin}
A generic element in $\mathrm{Aut}(F_n)$ is super irreducible.
\end{thm}

\begin{prop}\label{genericBetti}
    For a generic element in $\mathrm{Aut}(F_n)$, the first Betti number of its mapping torus is equal to one.
\end{prop}

\begin{proof}Write $\phi_{\mathrm{ab}}$ to denote the image of $\phi$ under the natural map induced by the action on the abelianisation of $F_n$,
\[ \begin{split} \mathrm{Aut}(F_n) &\to \mathrm{GL}(n, \Z)  \\ \phi &\mapsto \Phi_{\mathrm{ab}}.\end{split}\]
    The free abelianisation of $F_n \rtimes_{\phi} \Z$ is isomorphic to $\Z$ if and only if $\phi_{\mathrm{ab}}$ has no eigenvalue equal to 1 \cite[Theorem~2.4]{BogopolskiMartinoVentura2007}. By \Cref{Rivin}, for a generic element in $\mathrm{Aut}(F_n)$ which represents the automorphism $\phi$, the characteristic polynomial of $\phi_{\mathrm{ab}}$ is irreducible over $\QQ$. Hence the result follows. 
\end{proof}

Write $\mathcal{H}_{l,n}$ to denote the set of free-by-cyclic presentations 
\[\mathcal{P} = \langle x_1, \ldots, x_n, t \mid t^{-1}x_it = \Phi(x_i), 1 \leq i \leq n \rangle \text{ for all }\Phi \in \mathcal{W}_{l,n}.\] We say that a \emph{generic $F_n$-by-cyclic group satisfies property $P$ with probability $p$}, if 
\[\frac{\# \{G \in \mathcal{H}_{l,n} \mid G \text{ satisfies }P\}}{\# \mathcal{H}_{l,n}} \to 1 \text{ as }l \to \infty.\]

\begin{prop}\label{generic_properties} A generic $F_n$-by-cyclic group has first Betti number equal to one and is super irreducible. 
\end{prop}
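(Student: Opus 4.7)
The plan is to combine \Cref{Rivin} and \Cref{genericBetti}---which together say that a uniformly random $\Phi\in\mathcal{W}_l$ is asymptotically almost surely super-irreducible and yields $b_1(F_n\rtimes_\Phi\Z)=1$---and to transfer the conclusion from $\Phi\in\Out(F_n)$ to isomorphism types of free-by-cyclic groups.

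First, I would observe that both target properties descend to invariants of the isomorphism type of $G\cong F_n\rtimes_\Phi\Z$: the first Betti number is manifestly a group invariant, and super-irreducibility of $G$ is by definition the existence of \emph{some} super-irreducible splitting. Setting $q\colon\mathcal{W}_l\to\mathcal{G}_l$, $\Phi\mapsto [F_n\rtimes_\Phi\Z]$, this implies $q^{-1}(\mathcal{G}_l^{\mathrm{bad}})\subseteq\mathcal{W}_l^{\mathrm{bad}}$, where ``bad'' denotes failure of at least one of the two target properties. Surjectivity of $q$ then gives $\#\mathcal{G}_l^{\mathrm{bad}}\leq\#\mathcal{W}_l^{\mathrm{bad}}=o(\#\mathcal{W}_l)$.

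Next, I would aim for a matching lower bound on $\#\mathcal{G}_l$. Here the strategy is to use that when $b_1(G)=1$ the fibre is canonical---it is the kernel of the unique-up-to-sign epimorphism $G\to\Z$---so the isomorphism type of $G$ determines the $\Out(F_n)$-conjugacy class of $\Phi$ up to inversion. Consequently each fibre of $q$ over a good $G$ consists of at most two $\Out(F_n)$-conjugacy classes intersected with $\mathcal{W}_l$.

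The main obstacle---which I expect to be the crux of the argument---is then a conjugacy-growth estimate in $\Out(F_n)$: the conjugacy class of a generic element of $\mathcal{W}_l$ should meet $\mathcal{W}_l$ in at most polynomially many elements, whereas $\#\mathcal{W}_l$ grows exponentially in $l$. Combined with a sufficiently fast decay of the bad fraction underlying \Cref{Rivin} and \Cref{genericBetti} (the random-matrix arguments there furnish, in fact, exponential decay), this yields $\#\mathcal{G}_l\gtrsim\#\mathcal{W}_l/\mathrm{poly}(l)$, and hence $\#\mathcal{G}_l^{\mathrm{bad}}/\#\mathcal{G}_l\to 0$, as required.
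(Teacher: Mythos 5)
Your proposal and the paper's proof diverge significantly, and it is worth spelling out where.

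The paper's proof is very short: it argues that for a good $G\in\mathcal{G}_l$ (meaning $b_1(G)=1$ and super irreducible) the conjugacy classes of monodromy are exactly $[\Phi]$ and $[\Phi^{-1}]$, and from this deduces the pointwise inequality $\#\mathcal{G}_l^{\mathrm{good}}\geq\#\mathcal{W}_l^{\mathrm{good}}$; combined with $\#\mathcal{G}_l\leq\#\mathcal{W}_l$ (surjectivity of $q$) this yields $\#\mathcal{G}_l^{\mathrm{good}}/\#\mathcal{G}_l\geq\#\mathcal{W}_l^{\mathrm{good}}/\#\mathcal{W}_l\to 1$. You are right to be suspicious of a bare transfer from $\mathcal{W}_l$ to $\mathcal{G}_l$: the natural relation is in fact the reverse, since $q^{-1}(\mathcal{G}_l^{\mathrm{good}})=\mathcal{W}_l^{\mathrm{good}}$ and $q$ is many-to-one, so $\#\mathcal{W}_l^{\mathrm{good}}\geq\#\mathcal{G}_l^{\mathrm{good}}$. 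Your instinct that one must control the fibre sizes (i.e.\ lower-bound $\#\mathcal{G}_l$) in order to push the density statement forward under $q$ is therefore probing a genuine subtlety that the paper's one-line inequality does not visibly address.

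That said, your own argument has a real gap exactly where you flag it. You need two quantitative inputs and supply neither: (i) that the $\Out(F_n)$-conjugacy class of a generic $\Phi\in\mathcal{W}_l$ meets the sphere $\mathcal{W}_l$ in at most polynomially many elements, and (ii) that the bad fraction in $\mathcal{W}_l$ decays fast enough (you claim exponentially, citing the ``random-matrix arguments'' behind \Cref{Rivin}, but no such rate is recorded in the proposition statements you are invoking). Point (i) is far from a formality for $\Out(F_n)$: unlike a free group, there is no cyclic-reduction normal form giving an obvious linear bound, and I am not aware of such a conjugacy-growth estimate in the literature. Moreover, even the weaker statement that conjugacy classes are exponentially small relative to the sphere would only suffice if the deficit exceeds the bad-set growth exponent, so (i) and (ii) must be made to cooperate quantitatively, which your sketch does not do. As written, both the paper's proof and yours leave this lower bound on $\#\mathcal{G}_l$ (equivalently, the upper bound on the fibres of $q$ over good groups) unresolved, and that is the step a complete proof must supply.
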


\subsection{Nielsen fixed point theory}\label{sec:Nielson}

Let $X$ be a connected compact polyhedral complex and $f \colon X \to X$ a continuous self-map. An \emph{m-periodic point} $p \in X$ is a fixed point of the map $f^m$. A path $\gamma$ between two $m$-periodic points $x$ and $y$ is an \emph{m-periodic Nielsen path} if $f^m(\gamma)$ is homotopic to $\gamma$. An indivisible $m$-periodic Nielsen path is such that $\gamma$ cannot be expressed as the concatenation $\gamma = \alpha \cdot \beta$, where $\alpha$ and $\beta$ are non-trivial $m$-periodic Nielsen paths. We will call a 1-periodic Nielsen path simply a \emph{Nielsen path}.

We can define an equivalence relation on the set of $m$-periodic points so that $x \sim y$ if there exists an $m$-periodic Nielsen path from $x$ to $y$. We call the equivalence classes under this relation \emph{m-point classes}. Each $m$-point class forms a so-called \emph{isolated subset} of $\Fix(f^m)$ and thus it is possible to define its \emph{index} (see \cite[Section~1.3]{Jiang1996} for the definition of fixed point index).

The map $f$ acts on the set of $m$-point classes and the action preserves index. We let $\mathrm{Orb}_m(f)$ be the set of orbits of $m$-periodic classes under this action. Each orbit $\mathcal{O} \in \mathrm{Orb}_m(f)$ determines a free homotopy class of loops in the mapping torus $M_f$, and thus a conjugacy class in $\pi_1(M_f)$, which we denote by $\mathrm{cd}(\mathcal{O})$. Furthermore, every $\mathcal{O} \in \mathrm{Orb}_m(f)$ admits an index $\mathrm{ind}_m(f; \mathcal{O}) \in \mathbb{Z}$, defined to be the index of any $m$-point class in the orbit. An $m$-periodic orbit $\mathcal{O} \in \mathrm{Orb}_m(f)$ is said to be \emph{essential} if it has non-zero index. 

\begin{defn} The \emph{$m$-th Nielsen number} of $f$, denoted by $N_m(f)$, is the number of essential $m$-periodic orbits $\mathcal{O} \in \mathrm{Orb}_m(f)$. 
\end{defn}

It is a standard fact from Nielsen fixed point theory (see e.g. \cite[Chapter 1]{Jiang1983} and \cite{Jiang1996}), that each Nielsen number is independent of the choice of topological representative of $\Phi \in \mathrm{Out}(\pi_1(X))$. Hence, we may write $N_{\infty}(\Phi)$ to denote 
\[ N_{\infty}(\Phi) = \mathrm{lim}\,\mathrm{sup}_{m\to \infty}N_m(f)^{1/m},\]
where $(f, \Gamma)$ is any topological representative of $\Phi$.

\begin{lemma}\label{lemma:Nielsen-counting}
    If $(f,\Gamma)$ is an improved relative train track then there exists a positive constant $K$ such that \[  F(m) - K \leq  N_m(f) \leq F(m) + K,\]
    where $F(m)$ is the number of $f^m$-fixed points in the interior of exponentially growing edges.
\end{lemma}

\begin{proof}
Fix a maximal $f$-invariant filtration of $\Gamma$, 
\[\emptyset = \Gamma_0 \subseteq \Gamma_1 \subseteq \ldots \subseteq \Gamma_l = \Gamma,\]
with $S_i = \mathrm{Cl}(\Gamma_i \setminus \Gamma_{i-1})$ for each $1 \leq i \leq l$.

Suppose first that $e \in E(\Gamma)$ is a polynomially-growing edge. Then there exists a polynomially-growing stratum $S_i$ such that $S_i = \{e\}$ and the edge $e$ is either fixed by $f$, or $f(e) = e\gamma$ where $\gamma$ is an immersed loop in $\Gamma_{i-1}$. Hence $f^m(e) = e \gamma'$ for some loop $\gamma' \in \Gamma_{i-1}$. Thus, the interior of $e$ contributes at most one $f^m$-fixed point class. Moreover, each vertex of $\Gamma$ contributes at most one $f^m$-fixed class. Hence, $N_f(m) \leq F(m) + K_1$, where $K_1$ is the number polynomially-growing edges plus the number of vertices in $\Gamma$.

We now consider fixed points contained in the interiors of exponentially-growing edges. By \cite[Theorem~5.1.5]{BestvinaFeighnHandel2000}, each periodic Nielsen path in $\Gamma$ has period one. Moreover, for each exponentially-growing stratum $S_i$, there exists at most one indivisible Nielsen path $\gamma$ that intersects $S_i$ non-trivially, and the initial (partial) edges of $\gamma$ and $\gamma^{-1}$ are contained in $S_i$. Also, it is clear that all the fixed point classes contained in the interior of exponentially-growing strata are essential. It follows that $N_m(f)$ is bounded below by $F(m)$ minus the number of exponentially-growing strata in $\Gamma$, which we denote by $K_2$. Hence, for every $m \in \NN$,
\[F(m) - K_2\leq  N_m(f) \leq F(m) + K_1.\] The result follows by setting $K = \mathrm{max}\{K_1, K_2\}$.


 \end{proof}

\begin{prop}\label{train_track}  Let $\Phi \in \mathrm{Out}(F_n)$ be an outer automorphism with stretch factor $\lambda > 1$. Then $N_{\infty}(\Phi)$ is equal to $\lambda$.
\end{prop}

\begin{proof}

By \cite{BestvinaFeighnHandel2000}, there exists a positive integer $k$ such that $\Phi^k$ admits an improved relative train track representative $(f, \Gamma)$. Let $\lambda$ be the stretch factor of $\Phi^k$. We will start by proving that 
\[N_{\infty}(\Phi^k) = \lambda.\]

Let $A$ be the incidence matrix corresponding to $(f, \Gamma)$ and fix a maximal $f$-invariant filtration of $\Gamma$. Let $\{S_i\}_{i\in I}$ be the set of exponentially-growing strata of $\Gamma$ and let $\lambda_i$ be the stretch factor of $S_i$. Let $A_i$ denote the sub-matrix of $A$ corresponding to the edges in $S_i$. 

By \cref{lemma:Nielsen-counting}, there exists a constant $K$ such that for every $m \in \NN$, \begin{equation}\label{eq:Nielsen-counting} F(m) - K \leq  N_m(f) \leq F(m) + K,\end{equation} where $F(m)$ is the number of $f^m$-fixed points in the interior of the exponentially-growing edges.

Fix an exponentially-growing edge $e$ in $\Gamma.$ The number of fixed points of $f^m$ contained in the interior of $e$ is exactly the number of times the edge-path $f^m(e)$ crosses the edge $e$ in either direction minus a constant $C_{m,e} \in \{0,1,2\}$. Indeed, if the edge-path $f^m(e)$ traverse at least two edges and begins or ends with the edge $e$, then the fixed point corresponding to $e$ will not be in the interior of $e$. Note also that the number of times $f^m(e)$ crosses the edge in either direction is given by the element on the diagonal of $A^m$ corresponding to $e$.

Combining the argument in the previous paragraph with \eqref{eq:Nielsen-counting} we obtain that
\[N_m(f) \asymp \sum_{i\in I} \mathrm{tr}(A_i^m),\]
where for any two functions $g_1,g_2 \colon \NN \to \RR$, we write $g_1 \asymp g_2$ if there exists a constant $K > 0$ such that for all $m \in \mathbb{N}$,
\[ g_2(m) - K \leq g_1(m) \leq g_2(m) + K.\]

For each matrix $A_i$, let $n_i$ denote the order of $A_i$ and let $\lambda_{i,j}$ be its eigenvalues, for $1 \leq j \leq n_i$. Then 
\[\mathrm{tr}(A_i^m) = \sum_{1\leq j \leq n_i} \lambda_{i,j}^m.\]
Since $\lambda$ is the maximal stretch factor, have that $\left| \lambda_{i,j} / \lambda \right|^m \leq 1$ for each $i\in I$ and $j \leq n_i$, with equality for some $i,j$. Hence,
\[ \begin{split}\mathrm{lim}\,\mathrm{sup}_{m\to \infty}N_m(f)^{1/m} &= \lambda \cdot \lim \sup_{m\to \infty} \left(\sum_{i\in I}\sum_{1\leq j \leq n_i}(\lambda_{i,j}/\lambda)^m \right)^{1/m}  \\
&= \lambda.\end{split}\]
Thus, $N_{\infty}(\Phi^k)$ is equal to the stretch factor of $\Phi^k$.

By \cite[Corollary~7.14]{FrancavigliaMartino2021}, if $\lambda$ is the maximal stretch factor of a relative train track representative of $\Phi^k$, then $\lambda^{1/k}$ is the maximal stretch factor associated to $\Phi$. Note also that $N_{\infty}(\Phi^k) = N_{\infty}(\Phi)^k$. The result follows by combining the arguments in the previous paragraphs. \end{proof}

\subsection{Detecting atoroidal monodromy}\label{sec:hyp}
In this section we will prove that hyperbolicity (equivalently the property of admitting atoroidal monodromy) is determined by the profinite completion.  The strategy is to show finitely generated abelian subgroups of free-by-cyclic groups are fully separable and then use work of Brinkmann \cite{Brinkmann2000} and Wilton--Zalesskii \cite{WiltonZalesskii2017}.

Recall a subgroup $H\leqslant G$ is \emph{separable} if for every $g\in G\backslash H$ there exists a finite quotient $\rho\colon G\onto Q$ such that $\rho(g)\not\in\rho(H)$.  A subgroup $H$ is \emph{fully separable} in $G$, if every finite index subgroup of $H$ is separable in $G$. We will need the following lemma:

\begin{lemma}[{\cite[Lemma~4.6]{Reid2015}}]\label{Reid lemma}
    Let $G$ and $H \leq G$ be finitely generated. If $H$ is fully separable in $G$ then the closure of $H$ in $\widehat{G}$ is isomorphic to $\widehat{H}$.
\end{lemma}

\begin{prop}\label{sep reducible}
Let $G$ be a free-by-cyclic group and let $H\leqslant G$ be a finitely generated subgroup. If $H \leq G$ is free-by-cyclic or abelian then $H$ is separable in $G$. 
\end{prop}

\begin{proof}
Fix a fibred character $\varphi \colon G \to \mathbb{Z}$ of $G$. Let $F = \mathrm{ker}\varphi$ be the fibre, $t \in \varphi^{-1}(1)$ and let $\phi \in \mathrm{Aut}(F)$ be the automorphism corresponding to the conjugation action of $t$ on $F$ in $G$. Fix $H \leq G$ a free-by-cyclic subgroup and $g \in G \setminus H$.

Suppose first that $H$ is not contained in $F$. By \cite[Proposition~2.3]{FeighnHandel1999}, there exist a finitely generated subgroup $A \leq F$, an element $y\in F$ and a positive integer $k$ such that $\phi^k(A) = yAy^{-1}$, and 
\[H = \langle A, t^ky \rangle \simeq  A \rtimes \langle t^ky \rangle.\] 
Let $g \in G \setminus H$. Then $g = bt^m$, for some $b \in F$ and $m \in \mathbb{Z}$. Suppose that $m$ is not a multiple of $k$. Consider the finite quotient $\rho_k \colon G \to \Z / k\Z$ of $G$, which maps each element of $F$ to 0, and which sends $t$ to a generator of the cyclic group  $\Z / k\Z$ of order $k$. It follows that $\rho_k(H) = 0$ and $\rho_k(g)\neq 0$.

Suppose now that $m = kl$ for some $l \in \mathbb{Z}$. Then $g = b' (t^ky)^l$, for some $b' \in F$, and since $g \not \in H$ it must be that $b' \not \in A$. The usual Marshall--Hall argument gives a finite-index subgroup $F' \leq F$ such that $b' \not\in F'$ and $A \leq F'$. Let $N = [F : F']$. Let $\mathrm{ad}_y$ denote the inner automorphism of $F$ which acts by conjugation with $y$. Note that $\mathrm{ad}_y \cdot \phi^k \colon F \to F$ permutes the subgroups of $F$ of index $N$. Hence there exists some positive integer $M$ such that $(\mathrm{ad}_y \cdot \phi^k)^M(F') = F'$. Let $F'' = \bigcap_{i=0}^{M-1} (\mathrm{ad}_y \cdot \phi^k)^i(F')$. Then $\mathrm{ad}_y \cdot \phi^k (F'') = F''$ and $A \leq F''$. Furthermore, since $F'' \leq F'$, we have that $b' \not \in F''$. Thus $G' = \langle F'', t^ky \rangle \cong F'' \rtimes \langle t^ky \rangle$ is a finite index subgroup of $G$, such that $g \not \in G'$ and $H \leq G'$. 

Suppose now that $H$ is contained in $F$. Since $\ker \varphi$ is free, it follows that $H$ is infinite cyclic. Let $a \in F$ be a generator of $H$. Let $g = bt^m$ for some $b \in F$ and $m \in \Z$. If $m \neq 0$, consider the finite quotient $\rho_{m+1} \colon G \to \Z / (m+1)\Z$ which sends $F \to 0$ and $t$ to a generator of the cyclic group $\Z /(m+1)\Z$. Then $\rho_{m+1}(H) = 0$ and $\rho(g) \neq 0$.

Suppose finally that $g \in F$, Since $g \not \in \langle a \rangle \leq F$, by Marshall--Hall's theorem, there exists a finite index subgroup $F' \leq F$ such that $\langle a \rangle \leq F'$ and $b \not \in F'$. Moreover, there exists a positive integer $M$ such that $\phi^M(F') = F'$ since $\phi$ permutes finite index subgroups of a given index in $F$. Hence, 
\[G' := \langle F', t^M \rangle \cong F' \rtimes \langle t^M \rangle.\]
We have that $\langle a \rangle \leq F' \leq G'$ and since $b \not \in F'$, it must be the case that $b \not \in G'$. This completes the proof. \end{proof}




\begin{corollary}\label{fully_separable}
Let $G$ be a free-by-cyclic group.  If $H\leq G$ is a free-by-cyclic or abelian subgroup, then $H$ is fully separable in $G$.  In particular $\bar H$, the closure of $H$ in $\widehat{G}$, is isomorphic to $\widehat{H}$.
\end{corollary}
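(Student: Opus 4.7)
The plan is to reduce the statement to \Cref{sep reducible}. I begin by handling the abelian case: since $G$ is torsion-free and its fibre $F$ is free, any abelian subgroup $H \leq G$ meets $F$ in a cyclic subgroup (abelian subgroups of a free group are cyclic), while $H/(H \cap F)$ embeds into $G/F \cong \Z$ and is thus cyclic. Hence $H$ is torsion-free abelian of rank at most $2$, and so is either cyclic or isomorphic to $\Z^2 \cong \Z \rtimes_{\mathrm{id}} \Z$. The latter is itself free-by-cyclic (with fibre $F_1 = \Z$), and every finite-index subgroup of a cyclic group is cyclic while every finite-index subgroup of $\Z^2$ is again $\Z^2$, so the abelian case reduces to repeated application of \Cref{sep reducible} once the free-by-cyclic case is settled.

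For the free-by-cyclic case, the key step is the elementary observation that any finite-index subgroup $H' \leq H$ of a free-by-cyclic group $H \cong F_n \rtimes \Z$ is itself free-by-cyclic. Indeed, if $N \cong F_n$ denotes the normal fibre of $H$, then $H' \cap N$ is a subgroup of the free group $N$, and since $[H:H'] < \infty$, the image of $H'$ in $H/N \cong \Z$ has finite index in $\Z$ and hence is infinite cyclic; so $H'/(H' \cap N) \cong \Z$. Meanwhile $H' \cap N$ has index in $N$ bounded by $[H:H']$ and is therefore free of finite rank. Applying \Cref{sep reducible} to $H'$, viewed as a free-by-cyclic subgroup of $G$, yields separability of $H'$ in $G$. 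Since $H'$ was arbitrary, $H$ is fully separable in $G$.

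The final statement will follow from the standard equivalence between full separability of $H$ in $G$ and injectivity of the natural continuous map $\widehat H \to \widehat G$, whose image is by definition $\bar H$. The non-trivial direction of this equivalence is as follows: given a finite-index $K \leq H$ separable in $G$, write $K = \bigcap_{\alpha} L_\alpha$ with each $L_\alpha \leq G$ of finite index; the subgroups $L_\alpha \cap H$ all contain $K$ and lie in the finite lattice of subgroups of $H$ containing $K$, so only finitely many distinct values occur, and intersecting the corresponding finitely many $L_\alpha$ gives a finite-index $L \leq G$ with $L \cap H = K$. This shows the profinite topology on $H$ inherited from $G$ coincides with its intrinsic profinite topology. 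Since the substance of the corollary is contained in \Cref{sep reducible}, I do not expect any serious obstacle; the only mild verification is the closure of the class of free-by-cyclic subgroups under finite-index passage, which is immediate from the extension structure.
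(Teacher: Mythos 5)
Your proof is correct and follows essentially the same route as the paper: both reduce to \Cref{sep reducible} via the observation that the property of being free-by-cyclic (resp.\ abelian) is inherited by finite-index subgroups, and then conclude with the fact that full separability gives $\bar H\cong\widehat H$. The only differences are in presentation: you explicitly verify that any abelian subgroup of a free-by-cyclic group is either cyclic or $\Z^2\cong\Z\rtimes_{\mathrm{id}}\Z$ (hence covered by \Cref{sep reducible}, which only speaks of ``free-by-cyclic or cyclic''), whereas the paper passes over this silently; and you supply a short self-contained argument for the implication ``fully separable $\Rightarrow\bar H\cong\widehat H$'' (finding a finite-index $L\leq G$ with $L\cap H=K$ by intersecting finitely many of the $L_\alpha$), where the paper simply cites \cite[Lemma~4.6]{Reid2015}. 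These are sound and arguably tidy up a small gap in the paper's exposition, so the proposal is correct.
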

\begin{proof}
Every finite-index subgroup of $H$ is free-by-cyclic or abelian.  It follows from \Cref{sep reducible} that every finite-index subgroup of $H$ is separable in $G$. The final part follows by \cref{Reid lemma}.
\end{proof}

We have everything we need to prove \Cref{thmx:hyperbolicity} from the introduction.

\medskip
\setcounter{thmx}{2}
\begin{thmx} \label{thmx:hyperbolicity}
Let $G_A$ and $G_B$ be profinitely isomorphic free-by-cyclic groups. Then $G_A$ is Gromov hyperbolic if and only if $G_B$ is Gromov hyperbolic.
\end{thmx}

\smallskip

\begin{proof}
Let $G_A$ and $G_B$ be free-by-cyclic groups such that $\widehat G_A\cong \widehat G_B$. Suppose that $G_A$ is Gromov hyperbolic. By \cite{HagenWise2015}, $G_A$ is a cocompactly cubulated and thus virtually special.  Hence we may apply \cite[Theorem~D]{WiltonZalesskii2017} to deduce that $\widehat\ZZ^2$ is not a subgroup of $\widehat G_A$.  By \Cref{fully_separable}, the $\Z^2$ subgroups of $G_B$ are fully separable and since $\widehat G_B$ contains no $\widehat\ZZ^2$ subgroups, it follows $G_B$ contains no $\ZZ^2$ subgroups.  In particular, by \cite[Theorem~1.2]{Brinkmann2000} $G_B$ is Gromov hyperbolic.

Suppose conversely that $G_A$ is not Gromov hyperbolic. Then by \cite{Brinkmann2000}, $G_A$ has a $\ZZ^2$ subgroup and so by \Cref{fully_separable}, $\widehat{G}_A$ contains a $\widehat{\ZZ}^2$ subgroup.  Suppose now $G_B$ is not Gromov hyperbolic.  Then, by the argument in the previous paragraph, $\widehat{G}_B$ does not contain $\widehat{\ZZ}^2$ subgroups.  This contradiction completes the proof.
\end{proof}

We will need the following proposition later. It is proved in \cite[Lemma~2.2]{BridsonReid2020} but we include a proof for completeness.

\begin{prop}[{\cite[Lemma~2.2]{BridsonReid2020}}]\label{fully separable fibres}
Let $G$ be a group and $\varphi \colon G \to \Z$ an epimorphism. If $N = \ker\varphi$ is finitely generated, then $N$ is fully separable in $G$.
\end{prop}

\begin{proof}
We show that every finite index subgroup $H\leq_f N$ of $N$ is separable in $G$.

Pick an element $t \in \varphi^{-1}(1)$ and an automorphism $\phi \in \mathrm{Aut}(N)$ induced by the conjugation action of $t$. Let $g \in G \setminus H$. Then $g = bt^m$, for some $b \in N$ and $m \in \mathbb{Z}$. If $m \neq 0$, consider the finite quotient $\rho_{m+1} \colon G \to \Z / (m+1)\Z$ which sends $N \mapsto 0$ and $t$ to a generator of the cyclic group $\Z /(m+1)\Z$. Then $\rho_{m+1}(H) = 0$ and $\rho_{m+1}(g) \neq 0$. Suppose now that $g \in N$. Since $g \not \in H \leq N$, and since $H \leq_f N$ is finite index and thus separable, there exists a finite index subgroup $N' \leq N$ such that $H\leq N'$ and $b \not \in N'$. Moreover, since $\phi$ permutes finite index subgroups of a given index in $N$, there exists a positive integer $M$ such that $\phi^M(N') = N'$. Hence, 
\[G' := \langle N', t^M \rangle \cong N' \rtimes \langle t^M \rangle.\]
We have that $H\leq N' \leq G'$ and since $b \not \in N'$, it must be the case that $b \not \in G'$. This completes the proof.

\end{proof}

\section{Some properties of twisted Alexander polynomials and Reidemeister torsion}\label{sec:AP}
In this section we will collect a number of facts about twisted Alexander polynomials and twisted Reidemeister torsion that we will use later on.  For a survey on twisted invariants (in the context of $3$-manifolds) see \cite{FriedlVidussi2011survey}.  Our main contribution is a complete computation of the zeroth Alexander polynomials twisted by representations factoring through finite groups over characteristic zero fields (\Cref{zeroth AP palindromic}).

\begin{defn}[Alexander modules and polynomials]
Let $R$ be a unique factorisation domain and let $G$ be a finitely generated group.  Let $\varphi$ be a non-trivial primitive class in $H^1(G;\Z)$ considered as a homomorphism $G\onto \Z$ and let $\rho\colon G\to \GL_n(R)$ be a representation. Consider $R^n[t^{\pm 1}]$ equipped with the $RG$-bimodule structure given by 
\[g.x = t^{\varphi(g)}\rho(g)x, \quad x.g = xt^{\varphi(g)}\rho(g) \]
 for $g \in G, x \in  R^n[t^{\pm 1}]$.  For $n\in\ZZ$, we define the \emph{$k$th twisted Alexander module of $\varphi$ and $\rho$} to be $H_k(G;R^n[t^{\pm 1}])$, where $R^n[t^{\pm 1}]$ has the right $RG$-module structure described above. Observe that $H_k(G;R^n[t^{\pm 1}])$ is a left 
 $R[t^{\pm 1}]$-module.
If $G$ is of type $\mathsf{FP}_k(R)$, then the $k$th twisted Alexander module is a finitely generated $R[t^{\pm 1}]$-module.  Moreover, it is zero whenever $k<0$ or $k$ is greater than the cohomological dimension of $G$ over $R$.

Since $R$ is a UFD so is $R[t^{\pm1}]$.  Let $M$ be an $R[t^{\pm1}]$-module.  The \emph{order} of $M$ is the greatest common divisor of all maximal minors in a presentation matrix of $M$ with finitely many columns.  The order of $M$ is well-defined up to a unit of $R[t^{\pm1}]$ and depends only on the isomorphism type of $M$. 

Suppose that $G$ is of type $\mathsf{FP}_k(R)$.  The \emph{$k$th twisted Alexander polynomial} $\Delta_{k,R}^{\varphi,\rho}(t)$ over $R$ with respect to $\varphi$ and $\rho$ is defined to be the order of the $k$th twisted (homological) Alexander module of $\varphi$ and $\rho$, treated as a left $R[t^{\pm 1}]$-module.
\end{defn}

We will now collect a number of facts about twisted Alexander polynomials. Let $R$ be a unique factorisation domain. Given any polynomial $c(t)\in R[t^{\pm1}]$ where $c(t)=\sum_{i=0}^r c_it^i$ we write $c^\bigstar(t)$ for the polynomial $\sum_{i=0}^r c_{r-i}t^i$.  For $p(t),q(t)\in R[t^{\pm1}]$ we write $p(t)\doteq q(t)$ if $p(t)=uq(t)$ where $u\in R[t^{\pm 1}]$ is a unit. The following lemma is a triviality.

\begin{lemma}\label{AP conjugacy}
    Let $G$ be a group of type $\mathsf{FP}_n(R)$, let $\varphi\colon G\twoheadrightarrow\ZZ$, and let $\rho,\sigma\colon G\to\GL_n(R)$ be representations of $G$ over a UFD $R$.  If $\rho$ and $\sigma$ are conjugate representations, then
\[\Delta_n^{\varphi,\rho}(t)\doteq \Delta_n^{\varphi,\sigma}(t). \]
\end{lemma}

\begin{lemma}\label{AP direct sum formula}
Let $G$ be a group of type $\mathsf{FP}_n(R)$, let $\varphi\colon G\twoheadrightarrow\ZZ$, and let $\rho,\sigma\colon G\to\GL_n(R)$ be representations of $G$ over a UFD $R$.  Then,
\[\Delta_n^{\varphi,\rho\oplus\sigma}(t)\doteq \Delta_n^{\varphi,\rho}(t)\times\Delta_n^{\varphi,\sigma}(t). \]
\end{lemma}
\begin{proof}
This follows from the fact that homology commutes with taking direct sums of coefficient modules. 
\end{proof}

The following lemma is a triviality

\begin{lemma}\label{AP -phi recip}
Let $R$ be a UFD.  Let $G$ be a group, let $\varphi\colon G\twoheadrightarrow\ZZ$, and let $\rho\colon G\to\GL_k(R)$ be a representation. Then,
\[(\Delta^{\varphi,\rho}_{n})^\bigstar(t)\doteq\Delta^{-\varphi,\rho}_{n}(t)\doteq\Delta^{\varphi,\rho}_{n}(t^{-1})  \]
up to monomial factors with coefficients in $R^\times$.
\end{lemma}

The next lemma will be a key step in proving profinite rigidity of twisted Reidemeister torsion for our class of free-by-cyclic groups.  For a $G$-module $M$ being acted on via $\alpha\colon G\times M\to M$ we write $M_\alpha$ when we wish to make clear the $G$-module structure.

\begin{lemma}\label{zeroth AP palindromic}
Let $G$ be a finitely generated group, let $\varphi\colon G\twoheadrightarrow\ZZ$ be algebraically fibred, and let $\rho\colon G\twoheadrightarrow Q\to\GL_k(\QQ)$ be a representation factoring through a finite group. Then,
\[\Delta^{\varphi,\rho}_0(t)\doteq (1-t)^m P(t),\] 
where $m\geq 0$ and $P(t)$ is a product of cyclotomic polynomials, up to multiplication by monomials with coefficients in $\QQ^\times$.  In particular,
\[\Delta^{\varphi,\rho}_{0}(t)\doteq\Delta^{\varphi,\rho}_{0}(t^{-1}). \]
\end{lemma}
\begin{proof}
Let $F$ denote the kernel of $\varphi$.  We need to compute $M\coloneq H_0(G;\QQ^k[t^{\pm1}])$ which is naturally isomorphic to the coinvariants $(\QQ^k[t^{\pm1}])_G$.  

By Maschke's Theorem we may write the representation $\rho$ of $Q$ as a sum $\oplus_{i=1}^\ell\rho_i\colon Q\to \prod_{i=1}^\ell \GL_{k_i}(\QQ)$, where $\sum_{i=1}^\ell k_i =k$, of irreducible $\QQ$-representations of $L$.   We may now write \[M = \bigoplus_{i=1}^\ell (\QQ^{k_i}[t^{\pm1}])_G.\]

For each $i$ there are three possibilities:

\paragraph{\underline{\textbf{Case 1:}}} $\rho_i(Q)\neq \{1\}$ but $\rho_i(F)=\{1\}$.

In this case $\rho_i$ has image a non-trivial finite cyclic group $L$.  We quickly recap the $\QQ$-representation theory of $\Z/n$ for $n\geq 2$.  Recall that $\QQ[\Z/n]=\QQ[X]/(X^n-1)$ so the irreducible representations of $\Z/n$ are exactly the cyclotomic fields $\QQ(\chi_d)$ for each $d$ dividing $n$.  These representations are exactly given by the quotient map $\pi_d\colon\QQ[\Z/n]\to \QQ(\chi_d)$.
Note that in this case for a generator $g$ of $\Z/n$ the characteristic polynomial of $\pi_d(g)$ is the cyclotomic polynomial $\chi_d$.

Since $\rho_i$ is irreducible it follows that we are in the situation of a cyclotomic representation.  Consider the tail end of the standard resolution for $\Z$ over $\Z G$
\[\begin{tikzcd}
C_1 \arrow[r,"\partial"] & C_0 \quad =\quad a_0\Z G\oplus\dots\oplus a_{m-1}\Z G \oplus t\Z G \arrow[r,"\partial"] & \Z G \end{tikzcd}\]
where $a_0,\dots,a_{m-1}$ is a generating set for $F$, where $t$ is the generator of $\Z$ viewing $G=F\rtimes\Z$, and where 
\begin{equation}\label{eqn.partial0}
\partial = \begin{bmatrix} 1-a_0,\dots,1-a_{m-1},1-t \end{bmatrix}.
\end{equation}

We need to compute the order of the presentation matrix 
\[
\partial\otimes_{\Z G}\id_{\QQ[\Phi_d][t^{\pm1}]}=\begin{bmatrix} 0,\dots,0,\id-\rho_i(t)t \end{bmatrix}.
\]
But this is the same as computing an order of the square matrix $\id-\rho_i(t)t$.  
Now,
\begin{equation}\label{eqn.ordrhot}
    \ord(\id-\rho_i(t)t)\doteq \det(\id t^{-1}-\rho_i(t)t\cdot t^{-1})t^{p-1} \doteq \det(\id t^{-1}-\rho_i(t))
\end{equation}
but this is exactly the characteristic polynomial of $\rho_i(t)$ with respect to $t^{-1}$.  Namely, it is the cyclotomic polynomial $\chi_d(t^{-1})$ but this is palindromic of even degree, $t-1$, or $t+1$ so we have that $\Delta_0^{\varphi,\rho_i}(t)\doteq\chi_d(t)$. \hfill$\blackdiamond$

\paragraph{\underline{\textbf{Case 2:}}} $\rho_i(F)\neq\{1\}$.

We start by again by viewing $G$ as $F\rtimes \Z$.  In particular, we have a differential $\partial$ as in \eqref{eqn.partial0} such that $\Delta_0^{\varphi,\rho_i}$ is given by an order of 
\[D_i\coloneqq \partial\otimes_{\Z G}\id_{\QQ^{k_i}[t^{\pm1}]}=[\id - \rho_i(a_0),\dots,\id-\rho_i(a_{m-1}),\id-\rho_i(t)].\]
To this end we define $D$ to be the set of cofactors of $D_i$.  So  $\Delta_0^{\varphi,\rho_i}\doteq \gcd D$.

We first conjugate $\rho_i$ so that $\rho_i(t)$ is in block diagonal form.  Since the image of $t$ is cyclic, say of order $n$, we obtain a block structure where the non-identity blocks are matrices corresponding to non-trivial $\QQ$-representations of various subgroups $H\leqslant \Z/n$.  Thus, arguing as in \eqref{eqn.ordrhot} we see that \[(1-t)^{n'}\cdot \prod_{j=1}^\ell\chi_{n_j}(t)\in D,\] where $n'$ is dimension of the fixed subspace of $\rho_i(t)$ and $\chi_{n_j}(t)$ is the cyclotomic polynomial of order ${n_j}$ such that $n_j$ divides $n$.

Now, $\Delta_0^{\varphi,\rho_i}$ divides every element of $D$ and is a polynomial defined over $\QQ[t]$ (up to multiplication by $t^\ell$ for some $\ell\geq 0$) and $\chi_{n_j}(t)$ is the minimal polynomial for all primitive $n_j$th roots of unity.
In particular, any non-trivial polynomial dividing and not equal to $\chi_{n_j}(t)$ is not defined over $\QQ[t^{\pm1}]$.  It follows that $\Delta_0^{\varphi,\rho_i}=P_i(t)\cdot(1-t)^{n''}$ where $P_i(t)$ is a product of cyclotomic polynomials and $n''$ is a non-negative integer less than or equal to $k_i$. \hfill$\blackdiamond$


\paragraph{\underline{\textbf{Case 3:}}} $\rho_i(G)=\{1\}$.

In this case we are computing $(\QQ[t^{\pm1}])_G$ where $G$ acts trivially on $\QQ$.  Clearly, this is isomorphic to $\QQ[t^{\pm1}]/(1-t)$ which is additively isomorphic to $\QQ$.\hfill $\blackdiamond$

By \Cref{AP direct sum formula} we have that $\Delta^{\varphi,\rho}_0(t)\doteq \prod_{i=1}^\ell \Delta^{\varphi,\rho_i}_0(t)\doteq (1-t)^nP(t)$ where $n$ is some non-negative integer and $P(t)$ is a product of cyclotomic polynomials.  

The ``in particular'' now follows from the fact cyclotomic polynomials are palindromic (provided $d\neq2$) or equal to $t-1$ and an easy computation:  Write $P(t)=(t-1)^{m'}P'(t)$ where $m'$ is the number of $(t-1)$ factors of $P(t)$.  Let $\delta$ denote the degree of $P(t)$, let $\epsilon=1$ if exactly one of $m$ and $m'$ are non-zero, and let $\epsilon=0$ otherwise. Now, 
\begin{align*}
\Delta_0^{\varphi,\rho}(t^{-1})&\doteq (-1)^\epsilon t^{m+m'+\delta}(1-t^{-1})^m(t^{-1}-1)^{m'}P'(t^{-1})\\
&\doteq (1-t)^m(t-1)^{m'}P'(t)\\
&\doteq \Delta_0^{\varphi,\rho}(t). \qedhere
\end{align*}
\end{proof}

\begin{remark}
The previous lemma easily generalises to any field $\FF$ of characteristic  zero with the modified conclusion that $\Delta_0^{\varphi,\rho}(t)\doteq Q(t)P(t)$, where $Q(t)$ is a product of polynomials $(1-\zeta_i t)$ such that $\zeta_i$ is some root of unity in $\FF$, and where $P(t)$ is a product of cyclotomic polynomials whose roots do not lie in $\FF$.
\end{remark}

Let $R$ be a unique factorisation domain.  A polynomial $c(t)\in R[t^{\pm1}]$ is \emph{palindromic} if $c(t)=\sum_{i=0}^r c_it^i$ and $c_i=c_{r-i}$.  Given any polynomial $c(t)\in R[t^{\pm1}]$ where $c(t)=\sum_{i=0}^r c_it^i$ recall that we write $c^\bigstar(t)$ for the polynomial $\sum_{i=0}^r c_{r-i}t^i$.  Note that $c(t)\cdot c^\bigstar(t)$ is palindromic.

For a group $G$ we let $\mathbf{1}$ denote the trivial homomorphism $G\onto\{1\}$.

The following lemma is well known to experts.  We include a proof for completeness.

\begin{lemma}\label{ordAP is Bno}
Let $\FF$ be a field.  Let $G$ be a group of type $\mathsf{FP}_n(\FF)$.  If $\varphi\colon G\to \Z$ is an $\mathsf{FP}_n(\FF)$-fibring, then 
\[
\deg \Delta_{G,n}^{\varphi,\mathbf{1}}(t)= b_n(\ker\varphi;\FF),
\]
where the Alexander polynomial is taken over $\FF$.
\end{lemma}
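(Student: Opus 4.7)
The plan is to use Shapiro's lemma to identify the $n$-th twisted Alexander module with the homology of the fibre equipped with the monodromy action, and then to recognise its order as a characteristic polynomial.

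Set $F = \ker\varphi$ and fix a lift $t \in G$ of a generator of $G/F \cong \Z$. The right $\FF G$-module $\FF[t^{\pm 1}]$ used to define $\Delta_{G,n}^{\varphi,\mathbf{1}}$ is isomorphic to the induced module $\mathrm{Ind}_F^G \FF$ via the map $1 \otimes g \mapsto t^{\varphi(g)}$, so Shapiro's lemma yields
\[ H_n(G; \FF[t^{\pm 1}]) \cong H_n(F; \FF). \]
Moreover, because $F$ is normal in $G$, this isomorphism is $\FF[t^{\pm 1}]$-linear: the action of $t$ by left multiplication on the coefficient module on the left corresponds on the right to the homological monodromy $\Phi_\ast \in \Aut_\FF(H_n(F;\FF))$ induced by conjugation by $t$ on $F$. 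This equivariance is standard and follows from the naturality of Shapiro's lemma under the normaliser action.

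Next, the $\mathsf{FP}_n(\FF)$-fibring assumption means that $F$ is of type $\mathsf{FP}_n(\FF)$, so $V \coloneqq H_n(F;\FF)$ is a finite-dimensional $\FF$-vector space of dimension $k \coloneqq b_n(\ker\varphi;\FF)$. Fix an $\FF$-basis $v_1, \ldots, v_k$ of $V$ and let $A \in \GL_k(\FF)$ denote the matrix of $\Phi_\ast$ with respect to this basis. Since $V$ is already spanned over $\FF$ by the $v_i$, with $\FF[t^{\pm 1}]$-module relations $t \cdot v_i = \sum_j A_{ji} v_j$, it admits the square presentation
\[ \FF[t^{\pm 1}]^k \xrightarrow{\,tI - A\,} \FF[t^{\pm 1}]^k \twoheadrightarrow V \]
as an $\FF[t^{\pm 1}]$-module, with the left map injective because its determinant $\chi_A(t) \neq 0$ and $\FF[t^{\pm 1}]$ is a domain. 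Hence the order of $V$ equals $\det(tI - A) = \chi_A(t)$, a polynomial of degree exactly $k$. Combining this with the Shapiro identification gives $\Delta_{G,n}^{\varphi,\mathbf{1}}(t) \doteq \chi_{\Phi_\ast}(t)$, so $\deg \Delta_{G,n}^{\varphi,\mathbf{1}}(t) = k = b_n(\ker\varphi;\FF)$, as required. The only delicate point in the argument is the $\FF[t^{\pm 1}]$-equivariance of the Shapiro isomorphism, which is where the semi-direct splitting enters non-trivially.
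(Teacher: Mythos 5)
Your proof is correct and follows the same route as the paper's: both identify the $n$-th twisted Alexander module with $H_n(\ker\varphi;\FF)$ equipped with the $\FF[t^{\pm1}]$-action given by the homological monodromy, so that $\Delta_{G,n}^{\varphi,\mathbf 1}(t)$ is the characteristic polynomial of the monodromy and the degree claim follows by counting dimensions. The paper's proof simply asserts this identification in one line; you supply the Shapiro's-lemma argument (including the $\FF[t^{\pm1}]$-equivariance coming from normality of the fibre) and the explicit square presentation $tI-A$ whose determinant, the characteristic polynomial, has degree $b_n(\ker\varphi;\FF)$ and nonzero constant term $(-1)^k\det A$ (so the degree is unambiguous even up to units of $\FF[t^{\pm1}]$). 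These details are exactly what the paper's terse phrasing leaves implicit, so your proof is a fleshed-out version of the paper's rather than a genuinely different argument.
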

\begin{proof}
We may write $G$ as $\ker\varphi\rtimes\langle t\rangle$ and $\Delta_{G,n}^{\varphi,\mathbf{1}}(t)$ as the characteristic polynomial of the $\FF$-linear transformation $T_n\colon H_n(\ker\varphi;\FF)\to H_n(\ker\varphi;\FF)$ and $T^n\colon H^n(\ker\varphi;\FF)\to H^n(\ker\varphi;\FF)$, where $T$ is the induced map of $t$ on (co)homology.  Hence, \[H^n(\ker\varphi;\FF)\cong \FF[t^{\pm1}]/(\Delta_{G,n}^{\varphi,\mathbf{1}}(t)).\qedhere\]
\end{proof}


\begin{lemma}\label{top dim AP}
Let $R$ be a UFD.  Let $G$ be a group of type $\mathsf{F}$ admitting a compact $K(G,1)$ of dimension $n$, let $\varphi\colon G\twoheadrightarrow\ZZ$, and let $\rho\colon G\to\GL_k(R)$ be a representation.  If $\Delta^{\varphi,\rho}_{n}\neq 0$ over $R$, then $\Delta^{\varphi,\rho}_{n}\doteq 1$.
\end{lemma}
\begin{proof}
Consider the head end of the cellular chain complex for $G$, namely,
\[\begin{tikzcd}
0 \arrow[r] & C_n \arrow[r,"\partial_{n-1}"] & C_{n-1} \arrow[r] & \cdots
\end{tikzcd}\]
tensoring with $R^k[t^{\pm1}]$ and taking homology we see that
$H_n(G;R^k[t^{\pm1}])=\ker \partial_{n-1}\otimes \id_{R^k[t^{\pm1}]}$.  In particular, it is a submodule of a free $R[t^{\pm1}]$-module and so cannot be $R[t^{\pm1}]$-torsion unless it is $0$.  But since $\Delta^{\varphi,\rho}_{n}\neq 0$ by assumption, we have that $H_n(G;R^k[t^{\pm1}])$ is $R[t^{\pm1}]$-torsion.  The result follows.
\end{proof}

We now wish to define \emph{the twisted Reidemeister torsion} $\tau^{\varphi,\rho}_{G,R}(t)$ of $\varphi$ twisted by $\rho$ over $R$.  Rather than give the original definition which we will not need, we instead use the following lemma which recasts the invariant in terms of twisted Alexander polynomials as our definition.  The lemma can be deduced by standard methods, for example, it is an immediate corollary of \cite[Lemma 2.1.1]{Turaev1986}.

\begin{lemma}\label{defn RT}
Let $R$ be a UFD.  Let $G$ be a group of type $\mathsf{F}$, let $\varphi\colon G\twoheadrightarrow\ZZ$ have kernel of type $\mathsf{F}$, and let $\rho\colon G\to\GL_k(R)$ be a representation.  Then,
\[\tau^{\varphi,\rho}_{G,R}(t)\doteq \prod_{n\geq 0}\left(\Delta_{G,n}^{\varphi,\rho}(t)\right)^{(-1)^{n+1}} \]
up to monomial factors with coefficients in $\mathrm{Frac}(R)^\times$.
\end{lemma}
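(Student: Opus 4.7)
The plan is to start from the classical cellular definition of twisted Reidemeister torsion and then invoke the Milnor--Turaev formula that rewrites it as an alternating product of orders of homology modules. Fix a finite model for $K(G,1)$ whose cellular chain complex $C_*$ is a bounded complex of finitely generated free $\mathbb{Z}G$-modules with a preferred basis coming from the cells. Twisting by $\rho$ and $\varphi$ yields $D_* = C_* \otimes_{\mathbb{Z}G} R^k[t^{\pm 1}]$, a bounded complex of finitely generated free $R[t^{\pm 1}]$-modules whose homology is, by construction, the twisted Alexander module $H_n(G;R^k[t^{\pm 1}])$ of order $\Delta^{\varphi,\rho}_{G,n}(t)$. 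The torsion $\tau^{\varphi,\rho}_{G,R}(t)$ is defined whenever $D_* \otimes_{R[t^{\pm 1}]} Q$ is acyclic for $Q = \mathrm{Frac}(R[t^{\pm 1}])$, as the usual alternating product of determinants of lifts of boundary maps, measured against the cellular basis.

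The first step I would carry out is to verify the required acyclicity over $Q$. Writing $G = N \rtimes \mathbb{Z}$ with $N = \ker\varphi$, restrict $\rho$ to $N$; since $t$ acts only by its action on $N$ and by shifting $t^{\pm 1}$, the Lyndon--Hochschild--Serre spectral sequence (equivalently a Shapiro-type identification $R^k[t^{\pm 1}] \cong \mathrm{Ind}_N^G R^k$) together with $\mathrm{cd}_R(\mathbb{Z}) = 1$ shows that each $H_n(G;R^k[t^{\pm 1}])$ is a finitely generated $R$-module built from $H_*(N;R^k)$ and the induced $t$-action. Because $N$ is of type $\mathsf{F}$ these $R$-modules are finitely generated, hence torsion as $R[t^{\pm 1}]$-modules, so $D_* \otimes_{R[t^{\pm 1}]} Q$ is acyclic and $\tau^{\varphi,\rho}_{G,R}(t)$ is well defined.

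The second step is to invoke Turaev's Lemma 2.1.1. For any bounded chain complex of finitely generated free modules over the UFD $R[t^{\pm 1}]$ that becomes acyclic over $Q$ and whose homology is entirely torsion, its Reidemeister torsion equals $\prod_n (\mathrm{ord}\, H_n)^{(-1)^{n+1}}$ up to units of $R[t^{\pm 1}]$. I would either cite this directly or, if a self-contained proof were wanted, induct on the length of $D_*$, splitting off the top module in a short exact sequence of complexes and appealing to multiplicativity of torsion in short exact sequences, which reduces each step to a torsion-module computation. Applied to $D_*$ with $H_n = H_n(G;R^k[t^{\pm 1}])$, this yields exactly the stated formula.

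The main obstacle is bookkeeping rather than content: orders are defined only up to units of $R[t^{\pm 1}]$, while the Reidemeister torsion depends on the choice of basis and sign conventions and is well-defined only up to a monomial in $t$ with coefficient in $R^\times$. The equality is therefore stated up to monomials with coefficients in $\mathrm{Frac}(R)^\times$, which comfortably absorbs both ambiguities. Provided one keeps the two ambiguities straight, no further ingredient beyond the acyclicity check and Turaev's lemma is required.
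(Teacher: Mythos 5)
Your approach matches the paper's exactly: the paper gives no detailed proof but simply remarks that the lemma is an immediate corollary of Turaev's Lemma 2.1.1 (and in fact uses the displayed formula as the \emph{definition} of twisted Reidemeister torsion, since the cellular definition is never needed elsewhere). You correctly identify the one nontrivial point that the paper leaves unstated — that the complex $D_*\otimes_{R[t^{\pm1}]}Q$ is acyclic, equivalently that the twisted Alexander modules are $R[t^{\pm1}]$-torsion, which you derive from $\ker\varphi$ being of type~$\mathsf{F}$ via Shapiro/Wang — so your write-up is a legitimate fleshed-out version of the citation the authors give.
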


This allows us to easily compute the Reidemeister torsion of free-by-cyclic groups.

\begin{prop}\label{defn RT FbyZ}
Let $R$ be a UFD.  Let $G=F_n\rtimes_\varphi\Z$ be a free-by-cyclic group and let $\rho\colon G\to\GL_k(R)$ be a representation.  Then,
\[\tau^{\varphi,\rho}_{G,R}(t)=\frac{\Delta_{G,1}^{\varphi,\rho}(t)}{\Delta_{G,0}^{\varphi,\rho}(t)} \]
up to monomial factors with coefficients in $\mathrm{Frac}(R)^\times$.
\end{prop}
\begin{proof}
This follows from \Cref{top dim AP} and \Cref{defn RT}.
\end{proof}

The final well known lemma is elementary.

\begin{lemma}\label{conjugacy invariance of RT}
Let $R$ be a UFD.  Let $G$ be a group of type $\mathsf{F}$ admitting a character $\varphi\colon G\twoheadrightarrow\Z$ which has kernel of type $\mathsf{F}$.  If $\rho_1$ and $\rho_2$ are conjugate representations of $G$ into $\GL_k(R)$, then $\tau^{\varphi,\rho_1}_{G,R}(t)\doteq\tau^{\varphi,\rho_2}_{G,R}(t)$.
\end{lemma}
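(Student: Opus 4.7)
The plan is to reduce the claim directly to the conjugacy invariance already established at the level of twisted Alexander polynomials. By \Cref{defn RT}, for each representation $\rho_i$ we have
\[
\tau^{\varphi,\rho_i}_{G,R}(t) \doteq \prod_{n\geq 0}\bigl(\Delta_{G,n}^{\varphi,\rho_i}(t)\bigr)^{(-1)^{n+1}},
\]
so the Reidemeister torsion is determined (up to the usual ambiguity by a monomial with unit coefficient in $\mathrm{Frac}(R)^\times$) by the sequence of twisted Alexander polynomials.

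Now invoke \Cref{AP conjugacy} termwise: since $\rho_1$ and $\rho_2$ are conjugate in $\GL_k(R)$, we have $\Delta_{G,n}^{\varphi,\rho_1}(t)\doteq\Delta_{G,n}^{\varphi,\rho_2}(t)$ for every $n\geq 0$. Substituting into the displayed product yields $\tau^{\varphi,\rho_1}_{G,R}(t)\doteq\tau^{\varphi,\rho_2}_{G,R}(t)$, as required. The $\mathsf{F}$-hypothesis on $G$ and on $\ker\varphi$ ensures that the product is finite and that all the factors involved are well defined, so no convergence issue arises. I do not anticipate any obstacle; the statement is essentially a bookkeeping consequence of the two earlier lemmas, and the only point worth flagging is that the equalities $\doteq$ take place in $\mathrm{Frac}(R[t^{\pm 1}])^\times/(\text{monomials with unit coefficient})$, which is precisely the ambient group in which both sides of the final equality live.
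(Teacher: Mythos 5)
The paper does not supply a proof of this lemma; it is introduced with the remark ``The final well known lemma is elementary'' and left to the reader. Your argument is the natural one the authors evidently have in mind: express the torsion as the alternating product of twisted Alexander polynomials via \Cref{defn RT}, then apply \Cref{AP conjugacy} in each homological degree. The hypotheses on $G$ and $\ker\varphi$ guarantee the product is finite and each factor is defined, exactly as you note. Your proof is correct and matches the intended (omitted) argument.
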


\section{Regularity}\label{sec:regularity}
In this section we will introduce the definition of a $\widehat{\Z}$-regular isomorphism.  We will prove that in the case where $G$ has $b_1(G)=1$ every profinite isomorphism is $\widehat{\Z}$-regular and deduce some consequences.

\begin{defn}[Corresponding quotients]\label{corresponding quotients}
Let $G_A$ and $G_B$ be residually finite groups. Suppose there exists an isomorphism $\Theta \colon \widehat{G}_A \to \widehat{G}_B$. Let $Q$ be a finite group. A pair of quotient maps $\gamma_A \colon G_A \twoheadrightarrow Q$ and $\gamma_B\colon G_B \to Q$ is said to be \emph{$\Theta$-corresponding}, if $\gamma_A$ is given by the composite
\begin{equation}
    \begin{tikzcd}
        G_A \arrow[r, "i"] & \widehat{G}_A \arrow[r, "\Theta"] & \widehat{G}_B \arrow[r, "\widehat{\gamma}_B"] & Q
    \end{tikzcd}
\end{equation}
Here, $i \colon G_A \to \widehat{G}_A$ denotes the natural inclusion and $\widehat{\gamma_B}$ denotes the (profinite) completion of $\gamma_B$.
\end{defn}

\begin{defn}[Matrix coefficient modules]
Let $H_A$ and $H_B$ be a pair of finitely generated $\ZZ$-modules.  Let $\Theta \colon \widehat{H_A}\to\widehat{H_B}$ be a continuous homomorphism of the profinite completions.  We define the \emph{matrix coefficient module} 
\[\MC(\Theta;H_A,H_B)\]
 (or simply 
$\MC(\Theta)$ if there is no chance of confusion) for $\Theta$ with respect to $H_A$ and $H_B$ to be the smallest $\Z$-submodule $L$ of $\widehat{\Z}$ such that $\Theta(H_A)$ lies in the submodule $H_B\otimes_\Z L$ of $\widehat{H_B}$.  We denote by 
\[\Theta^\MC\colon H_A\to H_B\otimes_\ZZ\MC(\Theta)\]
 the homomorphism uniquely determined by the restriction of $\Theta$ to $H_A$.

For a finitely generated group $G$ let $G^{\mathrm{fab}}$ denote the free part of the abelianisation $G^{\mathrm{ab}}$.  That is, the quotient of the abelianisation of $G$ by its torsion elements.

Given groups $G_A$ and $G_B$ and a continuous homomorphism $\Theta\colon \widehat{G}_A\to\widehat{G}_B$, we have an induced continuous homomorphism $\Theta_\ast\colon\widehat{G}_A^{\mathrm{fab}}\to\widehat{G}_B^{\mathrm{fab}}$.
We define $\MC(\Theta)\coloneqq\MC(\Theta_\ast,G_A^{\mathrm{fab}},G_B^{\mathrm{fab}})$.
\end{defn}

\begin{defn}[$\widehat \Z$-regular isomorphism] \label{defn:Zhatreg}
The isomorphism $\Theta \colon \widehat{G}_A \to \widehat{G}_B$ is \emph{$\widehat \Z$-regular}, if there exists a unit $\mu \in \widehat{\Z}^{\times}$ and 
an isomorphism $\Xi\colon G_A^{\mathrm{fab}} \to G_B^{\mathrm{fab}}$ such that $\Theta_{*}$ is the profinite completion of the map given by the composite
\begin{equation}\label{composite}
\begin{tikzcd}
G_A^{\mathrm{fab}} \arrow[r, "\Xi"] & G_B^{\mathrm{fab}} \arrow[r, "\cdot\times\mu"] &\widehat{G_B^{\mathrm{fab}}}.
\end{tikzcd}
\end{equation}
We sometimes write $\Theta_{*}^{1/\mu} \colon G_A^{\mathrm{fab}} \to G_B^{\mathrm{fab}}$ to denote the map $\Xi$ in \eqref{composite} and $\Theta^{*}_{1/\mu} \colon H^1(G_B, \Z) \to H^1(G_A, \Z)$ to denote its dual. 

For any $\varphi \in H^1(G_B; \Z)$ and $\psi \in H^1(G_A; \Z)$, we say $\psi$ is the \emph{pullback of $\varphi$ via $\Theta$}, if $\psi = \Theta^{*}_{1/\mu}(\varphi)$.  

We extend this definition to finite index subgroups as it will be useful later on. 
 Suppose $L_A$ is a finite index normal subgroup of $G_A$ and let $L_B$ be the corresponding normal subgroup of $G_B$ under $\Theta$.  If $\psi \in H^1(G_A; \Z)$ is the pullback of $\varphi$ via $\Theta$, then we say \emph{$\psi|_{L_A}$ is the pullback of $\varphi|_{L_B}$} via $\Theta|_{\widehat{L}_A}$.
\end{defn}

We say a pair $(G,\psi)$ is a $\calp$-by-$\Z$ group for some group property $\calp$ if $G$ admits an epimorphism $\psi\colon G\to \Z$ such that the kernel has property $\calp$.

\begin{prop}[$\widehat{\Z}$-regularity]\label{mu unit}
    Let $G_A$ and $G_B$ be $\{$type $\mathsf{FP_\infty}\}$-by-$\Z$ groups satisfying $b_1(G_A)=b_1(G_B)=1$.  If $\Theta\colon \widehat{G}_A\to\widehat{G}_B$ is an isomorphism, then there exists a unit $\mu\in\widehat\Z^\times$ such that $\MC(\Theta)=\mu\Z$.
\end{prop}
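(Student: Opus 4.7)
The plan is to use the assumption $b_1(G_A)=b_1(G_B)=1$ to reduce the computation of $\MC(\Theta)$ to the study of a single continuous automorphism of $\widehat{\Z}$, where the classification of such automorphisms is completely standard.

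First, I would unpack the free-abelianizations. Since each $G_i$ is finitely generated with $b_1(G_i)=1$, we have $G_i^{\mathrm{ab}} \cong \Z \oplus T_i$ for some finite torsion group $T_i$, so that $G_i^{\mathrm{fab}} \cong \Z$ and $\widehat{G_i^{\mathrm{fab}}}\cong \widehat{\Z}$. Next I would argue that $\Theta$ induces a continuous isomorphism $\Theta_\ast\colon \widehat{G_A^{\mathrm{fab}}}\to \widehat{G_B^{\mathrm{fab}}}$. Profinite completion commutes with abelianization for finitely generated groups, so $\Theta$ descends first to an isomorphism $\widehat{G_A^{\mathrm{ab}}}\cong \widehat{G_B^{\mathrm{ab}}}$, i.e.\ $\widehat{\Z}\oplus T_A \cong \widehat{\Z}\oplus T_B$. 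Since $\widehat{\Z}$ is torsion-free, the torsion subgroup of the left-hand side is exactly $T_A$, and of the right-hand side is $T_B$; therefore the isomorphism must identify these torsion subgroups and descend to an isomorphism of the torsion-free quotients $\widehat{\Z}\to\widehat{\Z}$.

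Now I would invoke the classical description of continuous endomorphisms of $\widehat{\Z}$: any such map is given by multiplication by the image of $1$, and it is an isomorphism if and only if that image lies in $\widehat{\Z}^{\times}$. Let $\mu \in \widehat{\Z}^{\times}$ denote the image under $\Theta_\ast$ of a chosen generator of $G_A^{\mathrm{fab}}\cong \Z$. Finally I would unwind the definition of the matrix coefficient module. Identifying $G_B^{\mathrm{fab}}=\Z\subseteq \widehat{\Z}$, for any $\Z$-submodule $L\subseteq \widehat{\Z}$ we have $G_B^{\mathrm{fab}}\otimes_\Z L = L$, so the condition $\Theta_\ast(G_A^{\mathrm{fab}})\subseteq G_B^{\mathrm{fab}}\otimes_\Z L$ becomes $\mu\Z \subseteq L$. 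The minimal such $L$ is evidently $\mu\Z$, giving $\MC(\Theta)=\mu\Z$ as required.

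I do not expect any serious obstacle: the argument is essentially a direct computation once one observes that $b_1=1$ forces $\widehat{G_i^{\mathrm{fab}}}\cong \widehat{\Z}$. The only point requiring a moment's care is that the profinite isomorphism really does descend cleanly to the free part of the abelianization, which follows from the compatibility of profinite completion with abelianization for finitely generated groups and the torsion-freeness of $\widehat{\Z}$.
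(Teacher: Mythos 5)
Your proof is correct, and it takes a more self-contained route than the paper's. The paper proves this by invoking Liu's structural result on matrix coefficient modules (\cite[Proposition~3.2]{Liu2023}), which handles arbitrary first Betti number: part~(1) says $\MC(\Theta)$ is a non-zero finitely generated free $\Z$-module spanned by the entries of a matrix over $\widehat{\Z}$ (here a single entry $\mu$, since $b_1 = 1$), part~(2) supplies the factorisation $\Theta_{*}=\mu\,\widehat{\Xi}$, and then one observes $\mu$ must be a unit because $\Theta$ is an isomorphism. You instead exploit the $b_1=1$ hypothesis from the outset: since $G_i^{\mathrm{fab}}\cong\Z$, the induced map $\Theta_{*}$ lives on $\widehat{\Z}$, where the classification of continuous automorphisms (multiplication by a unit) does all the work, and $\MC(\Theta)$ is then read off directly from its definition. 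The two arguments land in the same place; yours is more elementary and transparent for this specific case, while the paper's phrasing is uniform with the higher-Betti-number setting of Liu (which they do not actually need here). One small point worth making explicit if this were to replace the paper's proof: the induced map $\Theta_{*}$ on free abelianisations is obtained exactly as you describe, by passing to profinite abelianisations (which commute with completion for finitely generated groups) and then quotienting by the torsion, and the resulting map on the torsion-free quotients is an isomorphism because an isomorphism of abelian groups necessarily identifies torsion subgroups. You also implicitly produce the isomorphism $\Xi$ of Definition~\ref{defn:Zhatreg} (the identity of $\Z$), which the paper's proof makes explicit because it is needed for the notion of $\widehat{\Z}$-regularity downstream.
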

\begin{proof}
    By \cite[Proposition 3.2(1)]{Liu2023}, the $\Z$-module $\MC(\Theta)$ is a non-zero finitely generated free $\Z$-module spanned by the single entry of the $1\times1$ matrix $(\mu)$ over $\widehat{\Z}$. By \cite[Proposition 3.2(2)]{Liu2023} we obtain a homomorphism $\Xi \colon G_A^{\mathrm{fab}}\to G_B^{\mathrm{fab}}$ such that $\Psi_\ast=\mu\widehat{\Xi}$.  Moreover, $\mu$ is a unit because $\Theta$ is an isomorphism.  Hence, $\MC(\Theta_\ast)=\mu\Z$.
\end{proof}

\begin{prop}[Fibre closure isomorphisms]\label{fibre closure iso}
Let $(L_A,\psi)$ and $(L_B,\varphi)$ be $\{$type $\mathsf{FP}_\infty\}$-by-$\Z$ groups.   Suppose $\Theta\colon \widehat{L}_A\to\widehat{L}_B$ is an isomorphism and $\psi$ is the pullback of $\varphi$ via $\Theta$ with unit $\mu$.  If $F_A$ is the fibre subgroup of $L_A$, then $\overline{F}_A$ projects isomorphically onto $\overline{F}_B$, the closure of the fibre subgroup of $L_B$, under $\Theta$.
\end{prop}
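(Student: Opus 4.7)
The plan is to identify $\overline{F}_A$ and $\overline{F}_B$ with the kernels of the continuous characters $\widehat{\psi}\colon\widehat{L}_A\to\widehat{\Z}$ and $\widehat{\varphi}\colon\widehat{L}_B\to\widehat{\Z}$, and then to use the pullback hypothesis to check that $\Theta$ intertwines these two characters up to multiplication by the unit $\mu$.

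First I would verify that $\overline{F}_A=\ker\widehat{\psi}$ (the same argument giving $\overline{F}_B=\ker\widehat{\varphi}$). One inclusion is automatic from $\psi(F_A)=0$ and closedness of $\ker\widehat{\psi}$. For the other, I would split $L_A\cong F_A\rtimes\langle t\rangle$ with $\psi(t)=1$, approximate any $x\in\ker\widehat{\psi}$ by a net $(f_kt^{n_k})$ with $f_k\in F_A$ and $n_k\in\Z$, and observe that $\widehat{\psi}(x)=\lim n_k=0$ in $\widehat{\Z}$ forces $t^{n_k}\to 1$ in $\widehat{L}_A$ (since the image of $t$ has finite order in any finite quotient). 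Continuity of multiplication in $\widehat{L}_A$ then yields $f_k=(f_kt^{n_k})t^{-n_k}\to x$, placing $x$ in $\overline{F}_A$. Note that $F_A$ is finitely generated, so \Cref{fully separable fibres} applies to identify $\overline{F}_A$ with $\widehat{F}_A$, and similarly $\overline{F}_B\cong\widehat{F}_B$.

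Next I would translate the pullback hypothesis into the identity
\[
\widehat{\varphi}\circ\Theta \;=\; \mu\cdot\widehat{\psi}.
\]
Unpacking \Cref{defn:Zhatreg}, the pullback hypothesis produces an isomorphism $\Xi\colon L_A^{\mathrm{fab}}\to L_B^{\mathrm{fab}}$ with $\Theta_{*}=(\cdot\mu)\circ\widehat{\Xi}$ on $\widehat{L_A^{\mathrm{fab}}}$, together with the relation $\bar{\psi}=\bar{\varphi}\circ\Xi$ on free abelianisations, where $\bar{\psi},\bar{\varphi}$ denote the maps induced by $\psi$ and $\varphi$. Combining these with the naturality square relating $\Theta$ to $\Theta_{*}$ via abelianisation yields the displayed identity after a short diagram chase.

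Finally, the hypothesis $\mu\in\widehat{\Z}^{\times}$ makes multiplication by $\mu$ an automorphism of $\widehat{\Z}$, whence $\ker\widehat{\psi}=\ker(\mu\cdot\widehat{\psi})=\Theta^{-1}(\ker\widehat{\varphi})$. Combined with the first step this gives $\Theta(\overline{F}_A)=\overline{F}_B$, and since $\Theta$ is a continuous isomorphism its restriction is the desired isomorphism $\overline{F}_A\to\overline{F}_B$. The only step requiring any real care is the identification $\overline{F}_A=\ker\widehat{\psi}$; everything else is formal.
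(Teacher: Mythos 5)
Your argument is sound for the case where $\Theta$ is itself a $\widehat{\Z}$-regular isomorphism, and the strategy is essentially the one the paper follows (which in turn patterns Liu's Corollary~6.2): exploit $\widehat{\Z}$-regularity to obtain a compatibility of characters, then compare kernels. You are more explicit than the paper in one useful place — you prove $\overline{F}_A=\ker\widehat{\psi}$ directly via the approximation net $(f_kt^{n_k})$, rather than manoeuvring through the abelianised kernels as the paper does. That argument is correct: $n_k\to 0$ in $\widehat{\Z}$ does force $t^{n_k}\to 1$ in $\widehat{L}_A$ because the image of $t$ has finite order in every finite quotient, and \Cref{fully separable fibres} guarantees that $\overline{F}_A$ is the completion $\widehat{F}_A$.

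However, there is a gap you should close. In \Cref{defn:Zhatreg}, ``$\psi$ is the pullback of $\varphi$ via $\Theta$'' is defined in \emph{two} situations: directly, when $\Theta$ is $\widehat{\Z}$-regular, and by restriction, when $L_A$ and $L_B$ are finite-index normal subgroups of groups $G_A,G_B$ admitting a $\widehat{\Z}$-regular $\tilde{\Theta}\colon\widehat{G}_A\to\widehat{G}_B$ with $\Theta=\tilde\Theta|_{\widehat{L}_A}$ and $\psi=\tilde\psi|_{L_A}$, $\varphi=\tilde\varphi|_{L_B}$. Your step~2 extracts $\Xi\colon L_A^{\mathrm{fab}}\to L_B^{\mathrm{fab}}$, which only exists in the first situation; in the second, $\Theta$ itself need not be $\widehat{\Z}$-regular. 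The fix is short — run your argument on the ambient pair to get $\widehat{\tilde\varphi}\circ\tilde\Theta=\mu\cdot\widehat{\tilde\psi}$ on $\widehat{G}_A$, then restrict to $\widehat{L}_A$ (using that $\widehat{L}_A$ embeds in $\widehat{G}_A$ as the closure of $L_A$, and that $\widehat{\tilde\psi}|_{\widehat{L}_A}=\widehat{\psi}$ since both are continuous extensions of $\psi$); steps~1 and~3 are then unchanged. The paper handles this second case by a separate reduction, so you should at least flag it.
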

\begin{proof}
By our definition of a pullback (\Cref{defn:Zhatreg}) there are two cases to consider: The first case is when $\Theta$ is a $\widehat{\Z}$-regular isomorphism; the second case is when we are given (by the pullback hypothesis) the following situation: $L_A$ and $L_B$ are finite index subgroups of groups $G_A$ and $G_B$ respectively such that there is $\widehat{\Z}$-regular isomorphism $\tilde\Theta\colon\widehat{G}_A\to\widehat{G}_B$ and $\psi$ is the pullback of $\varphi$ via $\tilde\Theta$.

We first prove the case where $\Theta$ is $\widehat{\Z}$-regular.  Our proof in this case essentially follows \cite[Corollary 6.2]{Liu2023}.  Write $L_A = F_A\rtimes_{\Psi} Z_A$ and $G_B = F_B\rtimes_{\Phi} Z_B$ with $Z_A\cong Z_B\cong \Z$.  Identify, $H_A$ with $G_A^{\mathrm fab}$ and $H_B$ with $G_B^{\mathrm fab}$.  By hypothesis the map $\Theta_\ast$ is the completion of an isomorphism $\Theta_\mu\colon H_A\to H_B$ followed by multiplication by $\mu$ in $\widehat{H}_B=H_B\otimes_Z \widehat{\Z}$.  Thus, $\psi$ is the composite
\[\begin{tikzcd}
H_A \arrow[r,"\Theta_\mu\otimes\mu"] & H_B\otimes_\Z\mu\Z \arrow[r, "1\otimes \mu^{-1}"] & H_B\otimes_\Z \Z \arrow[r, "="] & H_B \arrow[r, "\varphi|_{Z_B}"] & \Z.
\end{tikzcd}\]
We obtain that $\Theta_\ast(\ker\psi_\ast)=\mu F_\mu (\ker\varphi_\ast)=\mu\ker(\varphi_\ast)$ in $\widehat{H}_B$.  Since $\ker\varphi_\ast$ is a $\Z$-submodule of $H_B$, the closure of $\widehat{H}_B$ is invariant under multiplication by a unit.  Hence, $\Theta_\ast\overline{\ker\psi_\ast} = \overline{\mu \ker\varphi_\ast}= \mu \overline{\ker\varphi_\ast}=\overline{\ker\varphi_\ast}$.   This completes the proof of the first case.

We now prove the second case.  We may assume $G_A$ is a finite index overgroup of $H_A$ admitting a finite quotient $\alpha$ such that $\ker\alpha=H_A$.  Note that $\overline{F}_A$ is equal to the intersection of a finite index normal subgroup $\ker\widehat\alpha$ with $\ker\widehat{\tilde{\psi}}$ in $\widehat{G}_A$, where $\tilde\psi$ is the lift of $\psi$ to $G_A$.  Similarly, $\overline{F}_B=\ker\widehat\alpha\cap\ker\widehat{\tilde{\varphi}}$.  The result now follows from the $\widehat{\Z}$-regular case applied to $\tilde\Theta\colon G_A\to G_B$.
\end{proof}

Note that following proposition would be trivial if the unit $\mu$ equalled $1$.  However, the definition of pullback we are using (\Cref{defn:Zhatreg}) only assumes the existence of a unit.

\begin{prop}[Isomorphism of fibre subgroups]\label{fibre iso}
Let $(G_A,\psi)$ and $(G_B,\varphi)$ be free-by-cyclic groups.  Suppose $\Theta\colon \widehat{G}_A\to\widehat{G}_B$ is an isomorphism.  If $\psi$ is the pullback of $\varphi$ via $\Theta$,  then the fibre subgroup $F_A$ of $G_A$ and the fibre subgroup $F_B$ of $G_B$ are isomorphic.
\end{prop}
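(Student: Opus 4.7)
\medskip

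The plan is to assemble three results already established in the paper: the fibre--closure isomorphism (\Cref{fibre closure iso}), the full separability of the fibre (\Cref{fully separable fibres}), and the well known fact that a finitely generated free group is determined by its profinite completion.

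First, observe that the hypothesis that $\psi$ is the pullback of $\varphi$ via $\Theta$ means, in view of \Cref{defn:Zhatreg}, that $\Theta$ is $\widehat{\Z}$-regular (either directly, or after passing to a finite-index normal pair $L_A\leqslant G_A$, $L_B\leqslant G_B$ as in the definition). Applying \Cref{fibre closure iso} to this setup, we obtain that $\Theta$ restricts to a topological isomorphism
\[
\Theta|_{\overline{F}_A}\colon \overline{F}_A\;\xrightarrow{\ \cong\ }\;\overline{F}_B,
\]
where $\overline{F}_A$ and $\overline{F}_B$ denote the closures of the fibre subgroups inside $\widehat{G}_A$ and $\widehat{G}_B$ respectively.

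Second, the fibre subgroups $F_A=\ker\psi$ and $F_B=\ker\varphi$ are finitely generated (being of finite rank as free groups), so \Cref{fully separable fibres} guarantees that they are fully separable in $G_A$ and $G_B$. By the standard consequence of full separability (cf.\ \cite[Lemma~4.6]{Reid2015}), the induced maps $\widehat{F}_A\to\overline{F}_A$ and $\widehat{F}_B\to\overline{F}_B$ are isomorphisms of profinite groups. Combined with the previous paragraph, this yields
\[
\widehat{F}_A\;\cong\;\overline{F}_A\;\cong\;\overline{F}_B\;\cong\;\widehat{F}_B.
\]

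Finally, the rank of a finitely generated free group is detected by its profinite completion: the abelianisation of $\widehat{F}_n$ is the profinite abelian group $\widehat{\Z}^n$, and the integer $n$ is recovered, for instance, as $\dim_{\FF_p}\bigl(\widehat{F}_n^{\mathrm{ab}}\otimes\FF_p\bigr)$ for any prime $p$. Hence $F_A$ and $F_B$ have the same rank and are therefore isomorphic as abstract groups. There is no real obstacle in the argument; the whole content is packaged inside \Cref{fibre closure iso} and \Cref{fully separable fibres}, and the slightly delicate point---that $\Theta$ (or a suitable restriction) really is $\widehat{\Z}$-regular---is built into the pullback hypothesis.
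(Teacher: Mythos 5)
Your proof is correct but takes a genuinely different route from the paper's. The paper shows rank equality via twisted Alexander polynomials: it reduces $\psi$ and $\varphi$ modulo $n$, invokes cohomological goodness (\Cref{Serre good}) and \cite[Proposition~4.2]{BoileauFriedl2020} to get $H_k(G_A;M_{A,n})\cong H_k(G_B;M_{B,n})$, and then applies \cite[Proposition~3.4]{BoileauFriedl2020} twice to equate the degrees of $\Delta^{\psi,\mathbf{1}}_{G_A,1}$ and $\Delta^{\varphi,\mathbf{1}}_{G_B,1}$, which by \Cref{ordAP is Bno} compute the first $\FF_p$-Betti numbers of the fibres. Your route instead composes the fibre-closure isomorphism of \Cref{fibre closure iso} with the identification $\widehat{F}_A\cong\overline{F}_A$ and $\widehat{F}_B\cong\overline{F}_B$ coming from full separability (\Cref{fully separable fibres} together with \cite[Lemma~4.6]{Reid2015}), and then reads off the rank from $\widehat{F}_A\cong\widehat{F}_B$. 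Your argument is more direct and conceptually cleaner; it actually proves the slightly stronger statement that the profinite completions of the fibres are isomorphic, not merely that the ranks coincide. The paper's Alexander-polynomial proof is closer to the machinery it needs to develop anyway for later sections (profinite invariance of $\Delta_n^{\varphi,\rho}$ and characteristic polynomials), which probably explains the choice; it also avoids invoking \Cref{fibre closure iso}, though since that proposition immediately precedes this one in the text there is no real economy gained.
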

\begin{proof}
We will show that the degree of the first Alexander polynomials of $G_A$ and $G_B$ are equal.  By \Cref{ordAP is Bno} this computes the rank of the $\FF_p$-homology of $F_A$ and $F_B$ which determines their rank.  Since $F_A$ and $F_B$ are free groups this determines them up to isomorphism.  

Let $\psi_n$ and $\varphi_n$ denote the modulo $n$ reduction of $\psi\colon G_A\twoheadrightarrow\Z$ and $\varphi\colon G_B\twoheadrightarrow \Z$ respectively, namely the composites
\[\begin{tikzcd}
G_A\arrow[r, two heads, "\psi"] & \Z \arrow[r, two heads] & \Z/n \quad \quad \text{and} \quad \quad G_B\arrow[r, two heads, "\varphi"] & \Z \arrow[r, two heads] & \Z/n.
\end{tikzcd}\]
We endow $M_{A,n}\coloneqq\FF_p[\Z/n]$ with the $G_A$-module structure given by $\psi_n$ and $M_{B,n}\coloneqq \FF_p[\Z/n]$ with the $G_B$-module given by $\varphi_n$.  Since $G_A$ and $G_B$ are cohomologically good (\Cref{Serre good}), by \cite[Proposition~4.2]{BoileauFriedl2020} we have isomorphisms
$H_k(G_A;M_{A,n})\cong H_k(G_B;M_{B,n})$
for all $k,n\geq 0$.  In particular, $\dim_{\FF_p} H_k(G_A;M_{A,n}) = \dim_{\FF_p} H_k(G_B;M_{B,n})$.  Now, by applying \cite[Proposition~3.4]{BoileauFriedl2020} twice we get
\begin{align*}
    \deg\Delta_{G_A,1}^{\psi,\mathbf{1}}(t) &= \max_{n\in\NN}\left\{\dim_{\FF_p}H_1(G_A;M_{A,n}) - \dim_{\FF_p}H_0(G_A;M_{A,n}), \right\}\\
    &=\max_{n\in\NN}\left\{\dim_{\FF_p}H_1(G_B;M_{B,n}) - \dim_{\FF_p}H_0(G_B;M_{B,n}), \right\}\\
    &=\deg\Delta_{G_B,1}^{\varphi,\mathbf{1}}(t).\qedhere
\end{align*}
\end{proof}

\section{Profinite invariance of twisted Reidemeister torsion}\label{sec:Rtorsion}
The goal of this section is to establish profinite invariance of twisted Reidemeister torsion (\Cref{profinite R Torsion}) for free-by-cyclic groups with first Betti number equal to one.  We do this by first establishing invariance of the twisted Alexander polynomials in a more general setting.  Finally, in \Cref{sec.homostretch} we establish profinite invariance of homological stretch factors.

We record the following lemma to  show the reader that in the case of free-by-cyclic the assumption of \emph{good} is satisfied.  Note that it is a special case of \cite[Corollary~2.9]{Lorensen2008}.

\begin{lemma}\label{Serre good}
Let $G$ be a free-by-cyclic group.  Then $G$ is cohomologically good.
\end{lemma}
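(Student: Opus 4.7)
The plan is to verify the hypotheses of \cite[Corollary~2.9]{Lorensen2008}, which (roughly speaking) asserts that an extension of a good group of type $\mathsf{FP}_\infty$ by another good group is itself good, provided the profinite topology on the total group restricts to the full profinite topology on the normal subgroup. Applied to the defining short exact sequence
\[1 \to F_n \to G \xrightarrow{\varphi} \Z \to 1,\]
there are three things to check.

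The first two are classical and require no work. Namely, both $F_n$ and $\Z$ are cohomologically good: free groups of finite rank have cohomological dimension one, and goodness follows by the standard argument going back to Serre; and $\Z$ is trivially good. Moreover, $F_n$ is of type $\mathsf{F}$, in particular of type $\mathsf{FP}_\infty$.

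The only step with any substance is to verify that the natural homomorphism $\widehat{F}_n \to \widehat{G}$ is injective and identifies $\widehat{F}_n$ with the closure of $F_n$ in $\widehat{G}$, equivalently that $F_n$ is fully separable in $G$. This has already been done earlier in the paper: $G$ is finitely generated and residually finite by Baumslag \cite{Baumslag1971}, and the kernel $F_n$ of $\varphi$ is finitely generated, so \Cref{fully separable fibres} applies and yields full separability of $F_n$, while \cite[Lemma~4.6]{Reid2015} then gives the identification $\overline{F}_n \cong \widehat{F}_n$ of closed subgroups of $\widehat{G}$.

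With all the hypotheses of \cite[Corollary~2.9]{Lorensen2008} in place, the conclusion is immediate. I do not anticipate any genuine obstacle here: the argument is essentially a bookkeeping exercise on top of \Cref{fully separable fibres} and the goodness of free groups, which is exactly why the paper phrases the result as a ``special case'' of Lorensen's theorem.
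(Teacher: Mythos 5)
Your proposal is correct and is essentially the same as the paper's (non-)proof: the paper simply declares the lemma to be a special case of \cite[Corollary~2.9]{Lorensen2008} and says nothing more, while you spell out the verification of Lorensen's hypotheses --- goodness and type $\mathsf{FP}_\infty$ of the free fibre, goodness of $\Z$, and the identification $\overline{F}_n \cong \widehat{F}_n$ via \Cref{fully separable fibres} and \cite[Lemma~4.6]{Reid2015}. This is exactly the bookkeeping the paper leaves implicit, and it is sound.
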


\subsection{Twisted Alexander polynomials}\label{sec:AP.profinite}

\begin{prop}[Profinite invariance of twisted Alexander polynomials]\label{profinite AP}
Let $(G_A,\psi_A)$ and $(G_B,\varphi_B)$ be residually finite $\{$good type $\mathsf{F}\}$-by-$\Z$ groups. Let $\Theta\colon\widehat{G}_A\to\widehat{G}_B$ be an isomorphism and suppose $\psi_A$ is the pullback of $\varphi_B$ via $\Theta$ with unit $\mu$. Let $\psi_B \in H^1(G_B, \Z)$ be a primitive fibred class. Let $\psi_A \in H^1(G_A, \Z)$ be the fibred class $\Theta^{\ast}_{\mu}(\psi_B)$. Fix a $\Theta$-corresponding pair of finite quotients  $\gamma_A \colon G_A \to Q$ and $\gamma_B \colon G_B \to Q$. Suppose $\rho \colon Q \to \mathrm{GL}(k, \mathbb{Q})$ is a representation and $\rho_A \colon G_A \to \mathrm{GL}(k, \mathbb{Q})$ and $\rho_B \colon G_B \to \mathrm{GL}(k, \mathbb{Q})$ the pullbacks. Then,
\[\Delta_{G_A,n}^{\psi_A,\rho_A}(t)\cdot\Delta_{G_A,n}^{\psi_A,\rho_A}(t^{-1})\doteq\Delta_{G_B,n}^{\varphi_B,\rho_B}(t)\cdot\Delta_{G_B,n}^{\varphi_B,\rho_B}(t^{-1}) \]
holds in $\QQ[t^{\pm1}]$ up to monomial factors with coefficients in $\QQ^\times$.
\end{prop}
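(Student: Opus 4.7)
The plan is to follow the strategy of Jaikin-Zapirain \cite{Jaikin2020} and Liu \cite[Section~7]{Liu2023}, exploiting cohomological goodness to transport twisted $\QQ$-homology dimensions across the profinite isomorphism, then extracting the Alexander polynomials by a standard short exact sequence. The key input I would first establish is: for any $\Theta$-corresponding pair of finite quotients $\alpha_A\colon G_A\twoheadrightarrow L$ and $\alpha_B\colon G_B\twoheadrightarrow L$ and any $\QQ$-representation $\sigma\colon L\to \GL(m,\QQ)$, one has $\dim_\QQ H_n(G_A;\alpha_A^\ast\sigma) = \dim_\QQ H_n(G_B;\alpha_B^\ast\sigma)$ for every $n$. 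This follows from \cite[Proposition~4.2]{BoileauFriedl2020} together with the goodness hypothesis, and is precisely the tool already used in the proof of \Cref{fibre iso}.

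Next, for each $N\geq 1$, I would form the pair of homomorphisms $(\gamma_A,\psi_{A,N})\colon G_A\to Q\times\Z/N$ and $(\gamma_B,\mu_N\cdot\varphi_{B,N})\colon G_B\to Q\times\Z/N$, where $\psi_{A,N}$ and $\varphi_{B,N}$ denote the mod-$N$ reductions and $\mu_N\in(\Z/N)^\times$ is the image of the unit $\mu\in\widehat{\Z}^\times$. The $\widehat{\Z}$-regularity of $\Theta$ guarantees that this is $\Theta$-corresponding, so the previous step yields equality of $\dim_\QQ H_n$ for all twisted coefficient modules of the form $\rho\otimes\tau$ with $\tau$ a $\QQ$-representation of $Q\times\Z/N$. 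The bridge to the Alexander polynomial is the short exact sequence of $\QQ[t^{\pm 1}]$-modules
\[0\longrightarrow \QQ[t^{\pm 1}] \xrightarrow{t^N-1} \QQ[t^{\pm 1}] \longrightarrow \QQ[t^{\pm 1}]/(t^N-1) \longrightarrow 0,\]
whose associated long exact sequence in $G_A$-homology (with coefficients twisted by $\rho_A$) expresses $\dim_\QQ H_k\bigl(G_A;\rho_A\otimes\QQ[\Z/N]_{\psi_{A,N}}\bigr)$ in terms of the cyclotomic parts of $\Delta_k^{\psi_A,\rho_A}$ and $\Delta_{k-1}^{\psi_A,\rho_A}$ supported at $d$-th roots of unity with $d\mid N$. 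An analogous identity holds on the $B$ side, with $t$ replaced by $t^{\mu_N}$, i.e.\ with cyclotomic factors permuted by the Galois element encoded by $\mu_N$.

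Letting $N$ range over all positive integers, the resulting collection of dimension equalities pins down the cyclotomic divisors of the Alexander polynomials up to this Galois twist. Non-cyclotomic factors are detected by the same strategy applied with $\sigma$ running over all $\QQ$-irreducible representations of $Q\times\Z/N$ (not merely those pulled back from $\Z/N$), combined with \Cref{AP -phi recip} identifying $\Delta^{-\psi,\rho}_n(t)\doteq\Delta^{\psi,\rho}_n(t^{-1})$ to produce the symmetrising involution. The reason the conclusion involves the product $\Delta(t)\cdot\Delta(t^{-1})$ rather than $\Delta(t)$ itself is that the only discrete representatives of $\widehat{\Z}^\times$ are $\pm 1$: across all $N$, the unit $\mu_N$ cannot be pinned down modulo the ambiguity $\mu\leftrightarrow -\mu$, and symmetrising under $t\mapsto t^{-1}$ absorbs this sign ambiguity. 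I expect the main obstacle to be the combined bookkeeping of Galois theory for non-cyclotomic factors together with the $\mu$-twist; once the dimension-theoretic input is in place, the remaining steps are formal algebra.
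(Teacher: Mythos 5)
The approach you sketch differs substantially from the paper's, and it has a genuine gap in what you call the detection of non-cyclotomic factors. The dimension data $\dim_\QQ H_n(G;\rho\otimes\QQ[\Z/N])$ for varying $N$ only sees the cyclotomic divisors of $\Delta_n^{\psi,\rho}$: the long exact sequence you write down compares $\Delta_n$ and $\Delta_{n-1}$ at $N$-th roots of unity, and for a non-cyclotomic factor $f(t)$ the operator $t^N-1$ is invertible on the corresponding generalised eigenspace for all $N$, so $f$ leaves no trace in any of the finite-coefficient homology dimensions. Enlarging $\sigma$ to range over all $\QQ$-irreducibles of $Q\times\Z/N$ does not help: since the $Q$-part of the representation is fixed by the statement you are proving (you are pinning down $\Delta^{\psi_A,\rho_A}_n$ for one specific $\rho$) and the $\Z/N$-part contributes only cyclotomic twists, you are still confined to cyclotomic data. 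But the non-cyclotomic content of $\Delta_1$ is exactly what carries the homological stretch factor (a Perron--Frobenius eigenvalue, which is typically not a root of unity), so losing it would collapse the downstream applications.

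The paper's proof uses an essentially different mechanism precisely to avoid this loss. Rather than estimating homology dimensions over $\QQ$ with finite coefficients, it first reduces to an integral representation $\sigma$ (Liu's Lemma~7.6, cited here as \Cref{integralising finite rep}), writes the $n$-th Alexander polynomial as $P_{A,n}(t)=\det(\mathbf{1}-t\cdot\psi_{A,n}^{\mathrm{free}})$ via Liu's Lemma~7.7, and then transports the underlying monodromy action on fibre homology across the isomorphism $\Theta_F\colon\widehat F_A\to\widehat F_B$ (which exists by \Cref{fibre closure iso} and \Cref{fully separable fibres}, using goodness). This yields an equality of principal ideals $(P_{A,n}(t^\mu))=(P_{B,n}(t))$ in the completed group algebra $\cgrZ$ (Liu's Lemma~7.8). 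The crucial final step is Ueki's criterion (\Cref{Ueki criterion}): for \emph{palindromic} integral polynomials, equality of principal ideals in $\cgrZ$ after a $\mu$-power reparametrisation forces associatedness in $\Z[t^{\pm1}]$. This is a far finer input than dimension counting, and it is exactly what controls the non-cyclotomic part. The product $\Delta(t)\cdot\Delta(t^{-1})$ appears not to absorb a $\pm1$ sign ambiguity as you suggest, but because Ueki's criterion requires palindromicity, and this product (unlike $\Delta(t)$ alone) is palindromic by \Cref{AP -phi recip}. You would need to replace your dimension-counting core with something of Ueki's strength for your plan to close.
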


Before proving \Cref{profinite AP} we will collect a number of facts. The following criterion is due to Ueki \cite[Lemma~3.6]{Ueki2018}.

\begin{thm}[Ueki]\label{Ueki criterion}
Let $a(t),b(t)\in \Z[t]$ be a pair of palindromic polynomials and $\mu\in\widehat{\Z}$ be a unit.  If the principal ideals $(a(t^\mu))$ and $(b(t))$ of the completed group algebra $\cgrZ$ are equal, then $a(t)\doteq b(t)$ holds in $\Z[t^{\pm1}]$.
\end{thm}

\begin{defn}[$\mu$-powers]
Let $G$ be a profinite group, let $g\in G$, and let $\mu\in\widehat{\Z}$.  We define the  \emph{$\mu$-power} of $g$ to be $g^\mu=\varprojlim_N g^n\mod N$ where $N$ ranges over the inverse system of open normal subgroups of $G$ and $n\in\Z$ is congruent to $\mu$ modulo $|G/N|$.  Note that $hg^\mu h^{-1}=(hgh^{-1})^\mu$ for all $h\in G$.
\end{defn}

The following fact is classical, for convenience we cite Liu.

\begin{lemma}\emph{\cite[Lemma~7.6]{Liu2023}}\label{integralising finite rep}
Let $L$ be a finite group.  If $\rho\colon L\to \GL_k(\QQ)$ is a representation, then $\rho$ is conjugate to the representation $\sigma_\QQ$ over $\QQ$ given by extension of scalars of some representation $\sigma\colon L\to \GL_k(\Z).$
\end{lemma}

\begin{remark}\label{reduction to integral reps}
Combining \Cref{integralising finite rep} and \Cref{AP conjugacy} we may assume without loss of generality that the representation $\rho$ is equal to the extension of scalars of some integral representation $\sigma\colon Q\to\GL_k(\Z)$.  We denote by $\sigma_A\colon G_A\to\GL_k(\Z)$ the pullback $\gamma_A^\ast(\sigma)$ and similarly write $\sigma_B$ for $\gamma_B^\ast(\sigma)$.
\end{remark}

By \Cref{fibre closure iso} and \Cref{fully separable fibres} we have a commutative diagram with exact rows
\begin{equation}\label{eqn BIG diagram}
\begin{tikzcd}
1 \arrow[r, no head, tail] & F_A \arrow[r, tail] \arrow[d, tail]               & G_A \arrow[r, "\psi_A", two heads] \arrow[d, tail]                       & \Z \arrow[d, tail] \arrow[r, two heads] & 1 \\
1 \arrow[r, tail]          & \widehat{F}_A \arrow[r, tail] \arrow[d, "\Theta_F"] & \widehat{G}_A \arrow[r, "\widehat{\psi}_A", two heads] \arrow[d, "\Theta"] & \widehat{\Z} \arrow[d, "\mu"] \arrow[r] & 1 \\
1 \arrow[r, tail]          & \widehat{F}_B \arrow[r, tail]                     & \widehat{G}_B \arrow[r, "\widehat{\varphi}_B", two heads]                & \widehat{\Z} \arrow[r]                  & 1 \\
1 \arrow[r, tail]          & F_B \arrow[u, tail] \arrow[r, tail]               & G_B \arrow[r, "\varphi_B", two heads] \arrow[u, tail]                    & \Z \arrow[u, tail] \arrow[r]            & 1,
\end{tikzcd}
\end{equation}
where $\Theta_F=\Theta|_{\overline{F}_A}$ and $\Theta_F$, $\Theta$, and $\mu$ are isomorphisms.

We now write $G_A=F_A\rtimes\langle t_A\rangle$ with $\psi_A(t_A)=1$ and $G_B=F_B\rtimes \langle t_B\rangle$ with $\varphi_B(t_B)=1$.  Now \eqref{eqn BIG diagram} implies that $\Theta(t_A)$ is conjugate to the $\mu$-power $t^\mu_B$ of $t_B$ in $\widehat{G}_B$, up to multiplication by an element of $\widehat F_B$.  That is, there exists $h\in \widehat G_A$ and $k\in\widehat{F}_B$ such that $\Theta(t_A)^h=k t_B^{\mu}$.  In particular, $\widehat \varphi_B(\Theta(t_A))=\widehat\varphi_B(t_B^\mu)$.

Let $M_A$ be $\Z^k$ equipped with the $F_A$-module structure given by $\sigma_A|_{F_A}$ and similarly for $M_B$.  Note that $\psi_A$ and $\varphi_B$ induce automorphisms $\Psi_A$ of $F_A$ and $\Phi_B$ of $F_B$ (up to choosing an inner automorphism).  Moreover, $\Psi_A$ induces a $\Z$-linear isomorphism $\psi_{A,n}\colon H_n(F_A;M_A)\to H_n(F_A;M_A)$.  We note that the choices made here for picking group automorphisms $\Psi_A$ and $\Phi_B$ only depend on the outer automorphism class.  This is sufficient for us since these induce the same action on $H_n(F_A;-)$ resp. $H_n(F_B;-)$  It follows that $\psi_{A,n}$ only depends on $\sigma$ and $\psi_A$.  We obtain a commutative diagram of $\Z$-modules with exact rows
\begin{equation}\label{eqn Z module splitting}
    \begin{tikzcd}
0 \arrow[r, tail] & H_n(F_A;M_A)_{\mathrm{tors}} \arrow[d, "{\psi_{A,n}^{\mathrm{tors}}}"] \arrow[r, tail] & H_n(F_A;M_A) \arrow[d, "{\psi_{A,n}}"] \arrow[r, two heads] & H_n(F_A;M_A)_{\mathrm{free}} \arrow[d, "{\psi_{A,n}^{\mathrm{free}}}"] \arrow[r, two heads] & 0 \\
0 \arrow[r, tail] & H_n(F_A;M_A)_{\mathrm{tors}} \arrow[r, tail]                                           & H_n(F_A;M_A) \arrow[r, two heads]                           & H_n(F_A;M_A)_{\mathrm{free}} \arrow[r, two heads]                                           & 0.
\end{tikzcd}
\end{equation}
Note that after fixing bases we may consider $\psi_{A,n}^{\mathrm{free}}$ as a matrix in $\GL(H_n(F_A;M_A)_{\mathrm{free}})$.  Define
\begin{equation}\label{eqn PAn}
    P_{A,n}(t)\coloneqq \det_{\Z[t^{\pm1}]}\left(\mathbf{1}-t\cdot \psi_{A,n}^{\mathrm{free}}\right)
\end{equation}
and
\begin{equation}\label{eqn PBn}
    P_{B,n}(t)\coloneqq \det_{\Z[t^{\pm1}]}\left(\mathbf{1}-t\cdot \varphi_{B,n}^{\mathrm{free}}\right).
\end{equation}

The following lemma is \cite[Lemma~7.7]{Liu2023}.  The proof goes through verbatim once one assumes the kernels of $\psi_A$ and $\varphi_B$ are type $\mathsf{F}$.

\begin{lemma}\emph{\cite[Lemma~7.7]{Liu2023}}\label{Liu7.7}
Adopt the notation from \Cref{profinite AP}, \Cref{reduction to integral reps}, \eqref{eqn PAn}, and \eqref{eqn PBn}. We have $\Delta^{\psi_A,\rho_B}_{G_A,n}(t)\doteq P_{A,n}(t)$ and $\Delta^{\varphi_B,\rho_B}_{G_B,n}(t)\doteq P_{B,n}(t)$ in $\QQ[t^{\pm1}]$ up to monomials with coefficients in $\QQ^\times$.
\end{lemma}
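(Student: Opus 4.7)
My plan is to prove the identity $\Delta^{\psi_A,\rho_A}_{G_A,n}(t)\doteq P_{A,n}(t)$ for $G_A$ (the argument for $G_B$ is identical). The idea is to compute the twisted Alexander modules $H_n(G_A;\QQ^k[t^{\pm 1}])$ directly via the Wang exact sequence associated to the short exact sequence $1\to F_A\to G_A\to \Z\to 1$. The homotopy-finite hypothesis on $F_A$ ensures that the resulting Wang sequence is a long exact sequence of finitely generated $\QQ[t^{\pm 1}]$-modules terminating in finitely many degrees, which is exactly the input needed to adapt Liu's argument verbatim.

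First I would identify the relevant coefficients. Restricting the twisted $G_A$-module $\QQ^k[t^{\pm 1}]_{\psi_A,\rho_A}$ to $F_A$ kills $t$ (since $\psi_A|_{F_A}=0$), so as a $\QQ F_A$-module it is isomorphic to $M_A\otimes_\Z\QQ[t^{\pm 1}]$ with $F_A$ acting through $\sigma_A|_{F_A}$ on $M_A$ and trivially on $\QQ[t^{\pm 1}]$. Flatness of $\QQ[t^{\pm 1}]$ then gives
\[
H_n(F_A;\QQ^k[t^{\pm 1}])\;\cong\;H_n(F_A;M_A)\otimes_\Z\QQ[t^{\pm 1}],
\]
and the generator $t_A$ of the cyclic quotient acts on the right-hand side by the combined automorphism $t\cdot\psi_{A,n}$, where $\psi_{A,n}$ is the $\Z$-linear map on $H_n(F_A;M_A)$ already used in the paper.

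Next I would invoke the Wang exact sequence
\[
\cdots\to H_n(F_A;M_A)\otimes\QQ[t^{\pm 1}]\xrightarrow{1-t\cdot\psi_{A,n}} H_n(F_A;M_A)\otimes\QQ[t^{\pm 1}]\to H_n(G_A;\QQ^k[t^{\pm 1}])\to\cdots
\]
and observe two things. First, since $F_A$ is type $\mathsf F$, only finitely many degrees contribute, so each Alexander module is cut out by a single connecting map up to a kernel term in the adjacent degree. Second, for every $n$ the $\QQ[t^{\pm 1}]$-linear endomorphism $1-t\cdot\psi_{A,n}$ of a free $\QQ[t^{\pm 1}]$-module has determinant specialising to $1$ at $t=0$, so it has non-zero determinant and is therefore injective. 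Consequently the kernel contribution in the Wang sequence vanishes and one obtains a natural isomorphism
\[
H_n(G_A;\QQ^k[t^{\pm 1}])\;\cong\;\mathrm{coker}\bigl(1-t\cdot\psi_{A,n}\bigr).
\]

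Finally, I would compute the order ideal of this cokernel. Working over $\QQ$, the torsion part of $H_n(F_A;M_A)$ is annihilated, so $\psi_{A,n}\otimes_\Z\QQ$ and $\psi^{\mathrm{free}}_{A,n}\otimes_\Z\QQ$ induce the same linear endomorphism on the finite-dimensional $\QQ$-vector space $H_n(F_A;M_A)\otimes_\Z\QQ$. The order of a cokernel of the form $\mathrm{coker}(1-t\cdot\phi)$, where $\phi$ is a $\QQ$-linear endomorphism of a finite-dimensional space, equals the determinant $\det_{\QQ[t^{\pm 1}]}(1-t\cdot\phi)$ up to a unit of $\QQ^\times$. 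Assembling these identifications yields
\[
\Delta^{\psi_A,\rho_A}_{G_A,n}(t)\;\doteq\;\det_{\QQ[t^{\pm 1}]}\bigl(\mathbf{1}-t\cdot\psi^{\mathrm{free}}_{A,n}\bigr)\;=\;P_{A,n}(t),
\]
as required. The same argument applied to $G_B=F_B\rtimes\langle t_B\rangle$ gives $\Delta^{\varphi_B,\rho_B}_{G_B,n}(t)\doteq P_{B,n}(t)$. The only subtle point, and the place where the type $\mathsf F$ hypothesis on the fibres is really used, is ensuring that the Wang sequence terminates so that the cokernel description of the Alexander module is genuinely finite-dimensional; once that is in hand the rest is a bookkeeping exercise identical to \cite[Lemma~7.7]{Liu2023}.
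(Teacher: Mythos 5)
Your proof is correct and uses essentially the expected argument (the paper simply cites \cite[Lemma~7.7]{Liu2023} and notes the proof goes through under the type $\mathsf{F}$ hypothesis, so there is no separate in-paper proof to compare against; the Wang-sequence computation you give is the standard route). Two small remarks: the statement in the paper writes $\Delta^{\psi_A,\rho_B}_{G_A,n}$, which is surely a typo for $\Delta^{\psi_A,\rho_A}_{G_A,n}$, and you correctly use $\rho_A$; and your gloss on where type $\mathsf{F}$ enters is slightly imprecise — the point is not that the Wang sequence ``terminates'' (it exists in all degrees for any $N\rtimes\Z$), but that each $H_n(F_A;M_A)$ is a finitely generated $\Z$-module, so that $H_n(F_A;M_A)\otimes_\Z\QQ[t^{\pm1}]$ is a free $\QQ[t^{\pm1}]$-module of finite rank, making the determinant of $\mathbf{1}-t\cdot\psi_{A,n}$ well-defined and the order-equals-determinant computation for the cokernel of an injective endomorphism of a free module over a PID applicable. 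With that correction, the chain of reasoning — restriction of the twisted module to $F_A$, flatness to move $\QQ[t^{\pm1}]$ past homology, injectivity of $\mathbf{1}-t\cdot\psi_{A,n-1}$ via the nonvanishing constant term (using that $\psi_{A,n}$ is an automorphism to handle the $t$ versus $t^{-1}$ ambiguity up to a monomial), identification of the Alexander module with the cokernel, and computation of its order as the determinant, which over $\QQ$ sees only the free part $\psi_{A,n}^{\mathrm{free}}$ — is sound and delivers exactly $\Delta^{\psi_A,\rho_A}_{G_A,n}(t)\doteq P_{A,n}(t)$.
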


The following lemma is \cite[Lemma~7.8]{Liu2023}.  The proof goes through verbatim once one assumes that the kernels of $\psi_A$ and $\varphi_B$ are type $\mathsf{F}$, that $F_A$ and $F_B$ are fully separable in $G_A$ and $G_B$ respectively (this is given by \Cref{fully separable fibres}), and that $F_A$ and $F_B$ are good.

\begin{lemma}\emph{\cite[Lemma~7.8]{Liu2023}}\label{Liu7.8}
Adopt the notation from \Cref{profinite AP}, \Cref{reduction to integral reps}, \eqref{eqn PAn}, and \eqref{eqn PBn}. For all $n$ we have an equality of principal ideals $(P_{A,n}(t^\mu))=(P_{B,n}(t))$ in $\cgrZ$.
\end{lemma}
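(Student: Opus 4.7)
The plan is to lift the matrix identity comparing the $t_A$- and $t_B$-actions on twisted homology to the profinite completions, then translate the resulting identity into an equality of principal ideals in $\cgrZ$.

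First, I would exploit cohomological goodness: since $F_A$ and $F_B$ are free (hence good), and since the finitely generated $\Z$-modules $M_A,M_B$ factor through finite quotients of $F_A,F_B$ via $\gamma_A|_{F_A},\gamma_B|_{F_B}$, the comparison maps give natural isomorphisms
\[H_n(F_A; M_A) \otimes_{\Z} \widehat{\Z} \;\cong\; H_n^{\mathrm{cont}}(\widehat{F}_A; \widehat{M}_A),\]
and analogously on the $B$-side. Because $\gamma_A,\gamma_B$ are $\Theta$-corresponding, $\Theta_F$ intertwines the $\widehat{F}_A$-module structure on $\widehat{M}_A$ with the $\widehat{F}_B$-structure on $\widehat{M}_B$; by Proposition~\ref{fibre closure iso}, $\Theta_F$ thus induces a $\widehat{\Z}$-linear isomorphism between the two continuous homology groups.

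Second, the commutative diagram~\eqref{eqn BIG diagram} shows that $\Theta(t_A)$ is conjugate in $\widehat{G}_B$ to the profinite power $t_B^{\mu}$. Since inner automorphisms act trivially on (co)homology, transporting the $t_A$-action to the $B$-side via $\Theta_F$ yields precisely the $t_B^{\mu}$-action. Passing to free parts, this says that under the $\Theta_F$-identification, the $\widehat{\Z}$-linear extension of the integer matrix $\psi_{A,n}^{\mathrm{free}}$ agrees with the $\mu$-th profinite power $(\varphi_{B,n}^{\mathrm{free}})^{\mu}$ of the $\widehat{\Z}$-linear extension of $\varphi_{B,n}^{\mathrm{free}}$.

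Finally, I would reinterpret this identity inside $\cgrZ$, viewing the completed free parts of homology as $\cgrZ$-modules with $t$ acting as the corresponding $t_A$ or $t_B$. The polynomials $P_{A,n}(t)$ and $P_{B,n}(t)$ generate the respective order ideals of these two $\cgrZ$-modules, and the $\Theta_F$-induced isomorphism identifies the $A$-side generator $t_A$ with $t_B^{\mu}$, hence implements the ring automorphism $t\mapsto t^{\mu}$ of $\cgrZ$. Invariance of the order ideal under module isomorphism then yields $(P_{A,n}(t^{\mu}))=(P_{B,n}(t))$ in $\cgrZ$. The main obstacle will be this last translation: one must carefully verify that the characteristic polynomial of the integral matrix generates the order ideal of the base-changed module over $\cgrZ$, and that the correspondence $t_A\leftrightarrow t_B^{\mu}$ precisely realises the substitution $t\mapsto t^{\mu}$ on these principal ideals. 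This mirrors the analysis carried out for $3$-manifolds in \cite[Lemma~7.8]{Liu2023}, with the full separability input provided by \Cref{fully separable fibres} replacing its $3$-manifold analogue.
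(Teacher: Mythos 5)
Your outline is correct and reconstructs exactly the argument the paper intends: the paper defers entirely to Liu's Lemma~7.8, noting only that the needed inputs — goodness of the fibre, full separability of the fibre in the ambient group, and type $\mathsf{F}$ — carry over, and your steps (comparison isomorphism from goodness, intertwining via $\Theta$-corresponding quotients, the conjugacy $\Theta(t_A)\sim t_B^\mu$ killing the ambiguity up to inner automorphisms, then passing to order ideals via the ring automorphism $t\mapsto t^\mu$) are precisely the content of that proof. One small slip: the full separability input should be \Cref{fully separable fibres} rather than \Cref{fully_separable}, since the lemma is stated in the generality of $\{$good type $\mathsf{F}\}$-by-$\Z$ groups where the fibre need not be free-by-cyclic or abelian.
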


\begin{proof}[Proof of \Cref{profinite AP}]
This follows from \Cref{Liu7.7,Liu7.8} and \Cref{Ueki criterion} after observing that the polynomials $\Delta_{G_A,n}^{\psi_A,\rho_A}(t)\cdot\Delta_{G_A,n}^{\psi_A,\rho_A}(t^{-1})$ and $\Delta_{G_B,n}^{\varphi_A,\rho_B}(t)\cdot\Delta_{G_B,n}^{\varphi_B,\rho_B}(t^{-1})$ are palindromic by \Cref{AP -phi recip}.
\end{proof}

\subsection{Twisted Reidemeister torsion}\label{sec:profRtorsion}
We now prove profinite invariance of twisted Reidemeister torsion for free-by-cyclic groups with first Betti number equal to one.

\begin{corollary}[Profinite invariance of twisted Reidemeister torsion]\label{profinite R Torsion}
Let $(G_A,\psi_A)$ and $(G_B,\varphi_B)$ be free-by-cyclic groups. Let $\Theta\colon\widehat{G}_A\to\widehat{G}_B$ be an isomorphism. Let $\varphi_B \in H^1(G_B; \Z)$ be a primitive fibred class and suppose $\psi_A$ is the pullback of $\varphi_B$ via $\Theta$.   Fix a $\Theta$-corresponding pair of finite quotients  $\gamma_A \colon G_A \to Q$ and $\gamma_B \colon G_B \to Q$. Suppose $\rho \colon Q \to \GL(k, \QQ)$ is a representation and $\rho_A \colon G_A \to \GL(k, \QQ)$ and $\rho_B \colon G_B \to \GL(k, \QQ)$ the pullbacks.  Then,
\[\{\tau^{\psi_A,\rho_A}_{G_A}(t),\tau^{-\psi_A,\rho_A}_{G_B} \}=\{\tau^{\varphi_B,\rho_B}_{G_B}(t),\tau^{-\varphi_B,\rho_B}_{G_B} \}. \]
\end{corollary}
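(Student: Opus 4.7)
The plan is to reduce everything to twisted Alexander polynomials via \Cref{defn RT FbyZ}, which gives $\tau^{\pm\varphi,\rho}_G(t) \doteq \Delta_{G,1}^{\pm\varphi,\rho}(t)/\Delta_{G,0}^{\pm\varphi,\rho}(t)$ for any free-by-cyclic group. Thus the torsions are determined by the degree-zero and degree-one twisted Alexander polynomials, and by \Cref{AP -phi recip} we have $\Delta_n^{-\varphi,\rho}(t) \doteq \Delta_n^{\varphi,\rho}(t^{-1})$, so it is enough to understand the positively-indexed polynomials.

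First I would treat the denominator. Applying \Cref{profinite AP} at $n=0$ gives a palindromised identity which, combined with the palindromicity of $\Delta_0^{\varphi,\rho}$ from \Cref{zeroth AP palindromic}, collapses to $(\Delta_{G_A,0}^{\psi_A,\rho_A})^2 \doteq (\Delta_{G_B,0}^{\varphi_B,\rho_B})^2$. Unique factorisation in the UFD $\QQ[t^{\pm1}]$ then yields $\Delta_{G_A,0}^{\psi_A,\rho_A} \doteq \Delta_{G_B,0}^{\varphi_B,\rho_B}$, so the denominators of the torsions agree as an honest profinite invariant. Next I would apply \Cref{profinite AP} at $n=1$ and divide through by this degree-zero identity, yielding the product-of-torsions identity
\[\tau^{\psi_A,\rho_A}_{G_A}(t) \cdot \tau^{-\psi_A,\rho_A}_{G_A}(t) \doteq \tau^{\varphi_B,\rho_B}_{G_B}(t) \cdot \tau^{-\varphi_B,\rho_B}_{G_B}(t)\]
in $\QQ(t)$, up to units.

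The main obstacle is upgrading this product identity to equality of unordered pairs. Since $\tau^{-\varphi,\rho}_G(t) \doteq \tau^{\varphi,\rho}_G(t^{-1})$, both sides of the displayed identity are of the form $F(t)\, F(t^{-1})$, so the identity is a norm equation under the involution $t\mapsto t^{-1}$. In a UFD equipped with an involution, a norm equation does not in general determine the factor-pair uniquely, and to close this gap I would return to the sharper ideal-level identity $(P_{A,1}(t^\mu))=(P_{B,1}(t))$ of \Cref{Liu7.8} that underlies \Cref{profinite AP}, whose sign data (roughly whether $\mu$ is congruent to $+1$ or $-1$ in a suitable profinite sense) should determine which of the pairings $\tau^{\pm\psi_A,\rho_A}_{G_A} \doteq \tau^{\pm\varphi_B,\rho_B}_{G_B}$ or $\tau^{\pm\psi_A,\rho_A}_{G_A} \doteq \tau^{\mp\varphi_B,\rho_B}_{G_B}$ holds; either of these delivers the desired set equality.
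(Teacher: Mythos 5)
Your reduction to twisted Alexander polynomials via \Cref{defn RT FbyZ} and the treatment of the denominator via \Cref{zeroth AP palindromic} match the paper's route exactly; the paper phrases the denominator step as ``$S_{A,0}$ and $S_{B,0}$ are singletons up to $\doteq$'', which is the same observation as your $\Delta_{A,0}^2 \doteq \Delta_{B,0}^2 \Rightarrow \Delta_{A,0} \doteq \Delta_{B,0}$. The divergence is in the final step. The paper's proof simply invokes ``\Cref{profinite AP}, unique factorisation in $\QQ[t^{\pm1}]$, and \Cref{AP -phi recip}'' to conclude the set equality $S_{A,1}=S_{B,1}$, whereas you flag --- correctly --- that a norm identity $F(t)F(t^{-1})\doteq G(t)G(t^{-1})$ in $\QQ[t^{\pm1}]$ does not on its own determine the unordered pair $\{F(t),F(t^{-1})\}$: for instance with $F=(t-2)(t-3)$ and $G=(t-2)(1-3t)$ one has $F(t)F(t^{-1})\doteq G(t)G(t^{-1})\doteq(t-2)(t-3)(1-2t)(1-3t)$, while $\{F,F(t^{-1})\}\neq\{G,G(t^{-1})\}$. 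So your concern is a genuine subtlety in the argument, and the paper's written proof does not spell out why it is resolved here.

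Where your proposal falls short is in closing that gap. You correctly identify that the sharper ideal-level identity $(P_{A,1}(t^\mu))=(P_{B,1}(t))$ of \Cref{Liu7.8} is the place where the missing information must live, but the claim that ``the sign data of $\mu$'' then selects the correct pairing of factors is left entirely heuristic: you would need to make precise how a decomposition of the unit $\mu\in\widehat{\Z}^\times$ into a $\pm1$-part and a ``positive'' part interacts with the principal ideal in $\cgrZ$, and to show that this actually forces either $\Delta_{A,1}\doteq\Delta_{B,1}$ or $\Delta_{A,1}\doteq\Delta_{B,1}(t^{-1})$ rather than a mixed factor-swap as in the counterexample above. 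As written, that step is a sketch, not a proof, so the proposal is incomplete --- but the incompleteness is at the same point the paper's own proof is terse, and your diagnosis of where the work really needs to happen is accurate.
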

\begin{proof}
By \Cref{profinite AP}, unique factorisation in $\QQ[t^{\pm1}]$, and \Cref{AP -phi recip} we obtain
\[S_{A,n}=\{\Delta_{G_A,n}^{\psi_A,\rho_A}(t),\Delta_{G_A,n}^{-\psi_A,\rho_A}(t)\}=\{\Delta_{G_B,n}^{\varphi_A,\rho_B}(t), \Delta_{G_B,n}^{-\varphi_B,\rho_B}(t)\}=S_{B,n}. \]
By \Cref{defn RT FbyZ} the relevant Alexander polynomials are concentrated in degrees $0$ and $1$.  By \Cref{zeroth AP palindromic} the sets $S_{A,0}$ and $S_{B,0}$ contain exactly one element up to $\doteq$-equivalence.  Finally, the result follows from \Cref{defn RT FbyZ}.
\end{proof}

\subsection{Profinite invariance of homological stretch factors}\label{sec.homostretch}

\begin{thm}[Profinite invariance of homological stretch factors]\label{thm:homostretch}
Let $(G_A,\psi)$ and $(G_B,\varphi)$ be free-by-cyclic groups.  If $\Theta\colon\widehat{G}_A\to\widehat{G}_B$ is an isomorphism and $\psi$ is the pullback of $\varphi$ via $\Theta$, then $\{\nu_{\psi}^+,\nu_{\psi}^-\}=\{\nu_{\varphi}^+,\nu_{\varphi}^-\}$.
\end{thm}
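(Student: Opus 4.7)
The plan is to deduce the theorem directly from the profinite invariance of twisted Reidemeister torsion (\Cref{profinite R Torsion}) applied with the trivial representation. Recall that $\nu^+_\psi$ is the spectral radius of the abelianised monodromy $\Phi_A$ acting on $H_1(F_A;\QQ)$, i.e.\ the maximum absolute value of a root of $\Char \Phi_A(t)$, and $\nu^-_\psi$ is the spectral radius of $\Phi_A^{-1}$; the analogous description holds for $\nu^\pm_\varphi$ via the monodromy $\Phi_B$ of the splitting determined by $\varphi$.

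First I would compute $\tau^{\pm\psi,\mathbf{1}}_{G_A}(t)$ in closed form. By \Cref{defn RT FbyZ},
\[
\tau^{\psi,\mathbf{1}}_{G_A}(t) \doteq \frac{\Delta^{\psi,\mathbf{1}}_{G_A,1}(t)}{\Delta^{\psi,\mathbf{1}}_{G_A,0}(t)},
\]
and similarly for $-\psi$. Case 3 of the proof of \Cref{zeroth AP palindromic} gives $\Delta^{\pm\psi,\mathbf{1}}_{G_A,0}(t) \doteq 1-t$. The proof of \Cref{ordAP is Bno} identifies $\Delta^{\psi,\mathbf{1}}_{G_A,1}(t) \doteq \Char \Phi_A(t)$, and applying the same statement to $-\psi$ (or invoking \Cref{AP -phi recip}) gives $\Delta^{-\psi,\mathbf{1}}_{G_A,1}(t) \doteq \Char \Phi_A^{-1}(t)$. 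Hence
\[
\tau^{\psi,\mathbf{1}}_{G_A}(t) \doteq \frac{\Char \Phi_A(t)}{1-t}, \qquad \tau^{-\psi,\mathbf{1}}_{G_A}(t) \doteq \frac{\Char \Phi_A^{-1}(t)}{1-t},
\]
and the analogous formulas hold for $G_B$.

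Next I would invoke \Cref{profinite R Torsion} with the trivial representation---which is trivially pulled back through any $\Theta$-corresponding pair of quotients---to obtain
\[
\bigl\{\tau^{\psi,\mathbf{1}}_{G_A}, \tau^{-\psi,\mathbf{1}}_{G_A}\bigr\} = \bigl\{\tau^{\varphi,\mathbf{1}}_{G_B}, \tau^{-\varphi,\mathbf{1}}_{G_B}\bigr\}
\]
as unordered pairs up to units in $\QQ[t^{\pm 1}]$. Substituting the expressions above and clearing the common denominator $1-t$, this becomes an equality of unordered pairs of characteristic polynomials
\[
\{\Char \Phi_A, \Char \Phi_A^{-1}\} = \{\Char \Phi_B, \Char \Phi_B^{-1}\}
\]
in $\QQ[t^{\pm 1}]$ up to units. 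Taking spectral radii on each side then yields $\{\nu^+_\psi, \nu^-_\psi\} = \{\nu^+_\varphi, \nu^-_\varphi\}$, as desired.

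Since the heavy lifting is done by \Cref{profinite R Torsion}, there is no real obstacle here. The only new content is the routine identification of the trivially twisted Reidemeister torsion of a free-by-cyclic group with $\Char(\Phi^{\pm 1})/(1-t)$, which is immediate from the trivial-representation cases of \Cref{zeroth AP palindromic} and \Cref{ordAP is Bno} together with \Cref{defn RT FbyZ}.
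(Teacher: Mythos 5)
Your proof is correct. It takes a slightly different route from the paper: you apply \Cref{profinite R Torsion} with the trivial representation and then compute the trivially twisted Reidemeister torsions explicitly as $\Char\Phi^{\pm 1}(t)/(1-t)$ (via \Cref{defn RT FbyZ}, Case~3 of \Cref{zeroth AP palindromic}, and \Cref{ordAP is Bno}), clearing the common $1-t$ denominator to recover an equality of unordered pairs of characteristic polynomials. The paper instead applies \Cref{profinite AP} directly to the trivial representation in degree one, normalises the resulting product of degree-one Alexander polynomials, and invokes unique factorisation together with \Cref{AP -phi recip} to obtain the same equality of pairs $\{\Delta^{\pm\psi,\mathbf{1}}_1\}$; it then identifies these with the characteristic polynomials of the induced maps on $H_1$ of the fibres. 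Since \Cref{profinite R Torsion} is itself deduced from \Cref{profinite AP} via exactly the same unique-factorisation argument, the two proofs meet at the same intermediate statement $\{\Char\Phi_A,\Char\Phi_A^{-1}\}=\{\Char\Phi_B,\Char\Phi_B^{-1}\}$ and finish identically by taking spectral radii. Your detour is modestly longer but buys nothing new; the paper's direct appeal to \Cref{profinite AP} avoids computing the torsions altogether.
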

\begin{proof}
Denote the non-trivial primitive characters of $G_A$ by $\psi_A^{\pm}$ and the non-trivial primitive characters of $G_B$ by $\varphi_B^{\pm}$.  By \Cref{profinite AP} we have \[\Delta_{G_A,1}^{\psi_A^+,\mathbf{1}}(t)\cdot\Delta_{G_A,1}^{\psi_A^-,\mathbf{1}}(t)\doteq\Delta_{G_B,1}^{\varphi_A^+,\mathbf{1}}(t)\cdot\Delta_{G_B,1}^{\varphi_B^-, \mathbf{1}}(t)\] over $\QQ[t^{\pm1}]$.  Normalise the polynomials so that every term is a non-negative power of $t$ and the lowest term is $1$, and note that each of the four terms has the same degree.  Now, by unique factorisation in $\QQ[t^{\pm1}]$ we obtain the equality of sets \[S_A=\{\Delta_{G_A,1}^{\psi_A^+,\mathbf{1}}(t),\Delta_{G_A,1}^{\psi_A^-,\mathbf{1}}(t)\}=\{\Delta_{G_B,1}^{\varphi_A^+,\mathbf{1}}(t),\Delta_{G_B,1}^{\varphi_B^-, \mathbf{1}}(t)\}=S_B.\]
Now, since we are working over $\QQ$ the set $S_A$ [resp. $S_B$] is the set of characteristic polynomials for $(\psi^{\pm}_{A})_1$ [resp. $(\varphi_B^{\pm})_1$], that is, the set of characteristic polynomials for the induced maps on degree $1$ homology of the respective fibres.  In particular, the sets \[\{\nu_{\psi}^+,\nu_{\psi}^-\}\text{ and }\{\nu_{\varphi}^+,\nu_{\varphi}^-\}\]
can be computed by taking the modulus of the largest root of the Alexander polynomials in $S_A$ and $S_B$.  The desired equality follows.
\end{proof}

\section{Profinite invariance of Nielsen numbers}\label{sec:profNielson}

Let $X$ be a connected, compact topological space that is homeomorphic to a finite-dimensional cellular complex, with a finite number of cells in each dimension, and let $f \colon X \to X$ be a self-map. Recall from \cref{sec:Nielson} the definitions of the fixed point index $\mathrm{ind}_m(f; \mathcal{O})$ of $f^m$ at any point $p \in \mathcal{O}$, and the $m$-th Nielsen number $N_m(f)$ of $f$.

We will write $M_f$ to denote the mapping torus \[M_f = \frac{X\times [0,1]}{(f(x), 0) \sim (x, 1)}.\] Let $x_0 \in X$. We fix a path $\alpha \colon I \to X$ such that $\alpha(0) = f(x_0)$ and $\alpha(1) = x_0$. We identify $X$ with the fibre $X \times \{0\}$ in $M_f$ and write $\bar{x}_0$ denote the image of $x_0$ in $M_f$. We define $t \in \pi_1(M_f, \bar{x}_0)$ to be the loop obtained by concatenation of paths $\eta \cdot \alpha$, where 
$\eta_s = (x_0, s)$ for $s \in [0,1]$. The \emph{induced character} $\varphi \colon \pi_1(M_f) \to \Z$ maps every loop in $X$ based at $x_0$ to zero, and $\varphi(t) = 1$. 

Let $\zeta \colon \pi_1(M_f) \to \mathbb{Q}$ be any map that is constant on conjugacy classes. Then the \emph{$m$-th twisted Lefschetz number} of $f$ with respect to $\zeta$ is
\begin{equation}\label{eq:1} L_m(f; \zeta) = \sum_{\mathcal{O} \in \mathrm{Orb}_m(f)} \zeta(\mathrm{cd}(\mathcal{O}))\cdot \mathrm{ind}_m(f; \mathcal{O}).\end{equation}

For a finite-dimensional representation $\rho \colon \pi_1(M_f) \to \mathrm{GL}(k, R)$ of $\pi_1(M_f)$, let $\chi_{\rho} \colon \pi_1(M_f) \to R$ denote the trace map. We write $\mathrm{exp}(\cdot)$ to denote the formal power series,
\[\mathrm{exp}(x) = \sum_{k=0}^{\infty}\frac{x^k}{k!}.\]

\begin{thm}[{\cite{Jiang1996}, \cite[Lemma~8.2]{Liu2023}}]\label{Jiang} Let $\varphi \colon \pi_1(M_f) \to \Z$ denote the induced character. Suppose that $\mathbb{F}$ is a commutative field of characteristic 0 and that $\rho \colon \pi_1(M_f) \to \mathrm{GL}(k, \mathbb{F})$ is a finite-dimensional linear representation of $\pi_1(M_f)$. Then
\[\tau^{\rho, \varphi}_{\pi_1(M_f), \mathbb{F}[t^{\pm 1}]^k} \doteq \mathrm{exp} \sum_{m\geq 1} L_m(f; \chi_{\rho}) \frac{t^m}{m},\]
where the equality holds as rational functions in $t$ over $\mathbb{F}$, up to multiplication by monomial factors with coefficients in $\mathbb{F}^{\times}$.
\end{thm}

Let $Q$ be a finite group. 
We say two elements $g_1$ and $g_2$ in $Q$ are \emph{$\widehat{\mathbb{Z}}$-conjugate} if the cyclic groups $\langle g_1 \rangle$ and $\langle g_2 \rangle$ are conjugate in $Q$ (note that this is equivalent to the notion of $\widehat{\Z}$-conjugacy defined in \cite{Liu2023}).
This gives rise to an equivalence relation on the set $\mathrm{Orb}(Q)$ of conjugacy classes of $Q$. We write $\Omega(Q)$ to denote the resulting set of equivalence classes. For $\omega \in \Omega(Q)$, we let $\chi_{\omega} \colon \mathrm{Orb}(Q) \to \mathbb{Q}$ denote the characteristic function of $\omega$.

\begin{lemma}[{\cite[Lemma~8.5]{Liu2023}}]\label{Nielsen_counting} Fix $m \in \mathbb{N}$. Let $\gamma \colon \pi_1(M_f) \to Q$ be a quotient of $\pi_1(M_f)$ onto a finite group $Q$. Then,
\[N_m(f) \geq \# \{\omega \in \Omega(Q) \mid L_m(f; \gamma^{*}\chi_{\omega}) \neq 0\}.\] 
\end{lemma}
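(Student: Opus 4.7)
The plan is to unpack the definition of $L_m(f; \gamma^{*}\chi_\omega)$ in \eqref{eq:1} and then run a simple partition argument: because the equivalence classes $\omega \in \Omega(Q)$ partition $\mathrm{Orb}(Q)$, each periodic orbit of $f$ contributes to the twisted Lefschetz sum of at most one $\omega$.

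First, I would expand
\[L_m(f; \gamma^{*}\chi_\omega) \;=\; \sum_{\mathcal{O} \in \mathrm{Orb}_m(f)} \chi_\omega\bigl(\gamma(\mathrm{cd}(\mathcal{O}))\bigr) \cdot \mathrm{ind}_m(f; \mathcal{O}).\]
Since $\gamma$ is a group homomorphism, it sends the conjugacy class $\mathrm{cd}(\mathcal{O}) \subset \pi_1(M_f)$ into a single conjugacy class of $Q$; write $\omega(\mathcal{O}) \in \Omega(Q)$ for the unique equivalence class containing this image. Then $\chi_\omega(\gamma(\mathrm{cd}(\mathcal{O}))) = 1$ when $\omega(\mathcal{O}) = \omega$ and $0$ otherwise, so the identity above collapses to
\[L_m(f; \gamma^{*}\chi_\omega) \;=\; \sum_{\substack{\mathcal{O} \in \mathrm{Orb}_m(f) \\ \omega(\mathcal{O}) = \omega}} \mathrm{ind}_m(f; \mathcal{O}).\]

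Next, if this sum is nonzero then some summand must be nonzero, which produces an essential $m$-periodic orbit $\mathcal{O}_\omega$ with $\omega(\mathcal{O}_\omega) = \omega$. Because $\mathcal{O} \mapsto \omega(\mathcal{O})$ is a single-valued assignment, any two distinct $\omega, \omega' \in \Omega(Q)$ with nonvanishing $L_m(f; \gamma^{*}\chi_\omega)$ and $L_m(f; \gamma^{*}\chi_{\omega'})$ select distinct essential orbits $\mathcal{O}_\omega \neq \mathcal{O}_{\omega'}$. This yields an injection from $\{\omega \in \Omega(Q) : L_m(f;\gamma^{*}\chi_\omega) \neq 0\}$ into the set of essential $m$-periodic orbits of $f$, whose cardinality is $N_m(f)$ by definition, which is the desired inequality.

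The only point worth checking carefully is that $\omega(\mathcal{O})$ is well defined, i.e.\ that $\chi_\omega \circ \gamma$ is a genuine class function on $\pi_1(M_f)$. This reduces to the elementary observation that a group homomorphism carries conjugacy classes to conjugacy classes, together with the fact that $\Omega(Q)$ partitions $\mathrm{Orb}(Q)$ by definition of $\widehat{\mathbb{Z}}$-conjugacy; beyond this bookkeeping there is no serious obstacle.
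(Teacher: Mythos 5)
Your proof is correct and takes essentially the same route as the paper's own justification (given in the remark immediately following the lemma statement): unwind the definition of $L_m(f;\gamma^*\chi_\omega)$, observe that the nonvanishing of $\chi_\omega\circ\gamma$ on $\mathrm{cd}(\mathcal{O})$ singles out a unique $\omega(\mathcal{O})\in\Omega(Q)$ since $\Omega(Q)$ partitions $\mathrm{Orb}(Q)$, and conclude that a nonzero twisted Lefschetz number forces the existence of an essential orbit mapping into $\omega$, giving an injection into the set counted by $N_m(f)$.
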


Note that by \eqref{eq:1}, for every $\omega \in \Omega(Q)$ such that $L_m(f, \gamma^{*}\chi_{\omega}) \neq 0$, there exists some $\mathcal{O} \in \mathrm{Orb}_m(f)$ such that $\mathrm{ind}_m(f, \mathcal{O}) \neq 0$ and \[ \begin{split}\gamma^{*}\chi_{\omega}(\mathrm{cd}(\mathcal{O})) &= \chi_{\omega}  \circ \gamma(\mathrm{cd}(\mathcal{O}))  \\
&\neq 0,
\end{split}
\]
which holds if and only if $\gamma(\mathrm{cd}(\mathcal{O})) \in \omega$. Hence the number of such elements in $\Omega(Q)$ is bounded above by the number of essential $m$-periodic orbits of $f$, which is exactly $N_m(f)$.

The following lemma is a strengthening of Lemma~8.6 in \cite{Liu2023}, however the proof follows from Liu's proof with only a slight modification. We provide a sketch for the convenience of the reader.

\begin{lemma}\label{Nielsen_equiv} Suppose that $\pi_1(M_f)$ is conjugacy separable. Then, for any $m \in \mathbb{N}$ there exists a finite quotient $Q_m$ of $\pi_1(M_f)$ such that 
\begin{equation} \label{Nielsen_equalities}
\begin{split}  N_m(f) &= \{\omega \in \Omega(Q_m) \mid L_m(f; \gamma^{*}\chi_{\omega}) \neq 0\}, \text{ and }\\ N_m(f^{-1}) &= \{\omega \in \Omega(Q_m) \mid L_m(f^{-1}; \gamma^{*}\chi_{\omega}) \neq 0\}.\end{split}
\end{equation}
\end{lemma}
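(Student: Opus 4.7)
The plan is to apply \cite[Lemma~8.6]{Liu2023} separately to $f$ and to $f^{-1}$, and then combine the two resulting quotients by passing to a common refinement. I would first observe that the map $(x,s)\mapsto(x,1-s)$ gives a homeomorphism $M_f\to M_{f^{-1}}$ and hence a canonical isomorphism $\pi_1(M_{f^{-1}})\cong\pi_1(M_f)$ carrying the induced character of $f^{-1}$ to $-\varphi$. Conjugacy separability of $\pi_1(M_f)$ therefore transfers to $\pi_1(M_{f^{-1}})$, so Liu's lemma is applicable in both settings.

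Next I would reformulate each equality in \eqref{Nielsen_equalities} as an injectivity statement on essential orbits. Enumerate the essential $m$-periodic orbits $\mathcal{E}_m(f)=\{\mathcal{O}_1,\ldots,\mathcal{O}_r\}$, so that $r=N_m(f)$, and set $g_i=\mathrm{cd}(\mathcal{O}_i)\in\pi_1(M_f)$. For a finite quotient $\gamma\colon\pi_1(M_f)\twoheadrightarrow Q$, define $\tilde\gamma\colon\mathcal{E}_m(f)\to\Omega(Q)$ by sending $\mathcal{O}_i$ to the $\widehat{\Z}$-class of $\gamma(g_i)$. When $\tilde\gamma$ is injective, the only orbit contributing to $L_m(f;\gamma^{*}\chi_{\tilde\gamma(\mathcal{O}_i)})$ is $\mathcal{O}_i$ itself, since other essential orbits lie in distinct $\widehat{\Z}$-classes while non-essential orbits have vanishing index, so this Lefschetz number equals $\mathrm{ind}_m(f;\mathcal{O}_i)\neq 0$. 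Combined with \Cref{Nielsen_counting}, injectivity of $\tilde\gamma$ produces the first equality; the analogous argument applied to $f^{-1}$ gives the second.

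With the reformulation in hand, Liu's lemma furnishes finite quotients $Q_m^f$ and $Q_m^{f^{-1}}$ of $\pi_1(M_f)$ for which the relevant injectivity statements hold. Taking $Q_m$ to be the image of $\pi_1(M_f)$ in $Q_m^f\times Q_m^{f^{-1}}$ provides a common refinement, and injectivity of $\tilde\gamma$ is inherited under refinement, so $Q_m$ satisfies both conditions simultaneously. The main obstacle, which is exactly the content of Liu's original proof, is arranging injectivity for a single map: for each pair $i\neq j$ one must rule out conjugacy of $\gamma(g_i)$ with $\gamma(g_j)^k$ in $Q_m^f$ for every $k$ coprime to the order of $\gamma(g_j)$. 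In $\pi_1(M_f)$ this non-conjugacy is automatic---for $k=1$ by distinctness of the orbits, and for $k\neq 1$ because $\varphi(g_j^k)=km\neq m=\varphi(g_i)$. Liu promotes this to a finite quotient by first ensuring that the image of $\varphi$ has order coprime to $m$ and divisible by the orders of the $\gamma(g_j)$, forcing any putative conjugator witness to satisfy $k\equiv 1$ modulo the order of $\gamma(g_j)$, and then invokes ordinary conjugacy separability for the finitely many pairs $(g_i,g_j)$. Our strengthening amounts only to running this argument in parallel for $f$ and $f^{-1}$ and combining the two quotients.
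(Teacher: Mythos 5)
Your proof is correct, and it is organized slightly differently from the paper's. The paper runs Liu's argument once, constructing a single quotient $Q_m$ that is simultaneously $\widehat{\Z}$-injective on the $m$-periodic orbits of $f$ (which live in the coset $Kt^m$) and on those of $f^{-1}$ (in $Kt^{-m}$); the modification to Liu is to consider both cosets at once, which is painless since conjugacy separability produces a single quotient separating the finitely many conjugacy classes involved. You instead apply Liu's lemma twice as a black box and pass to the common refinement, relying on the observation that $\widehat{\Z}$-injectivity of $\tilde\gamma$ on the essential orbits is preserved when one enlarges the quotient. That observation is correct (if $\langle\bar{x}_1\rangle$ and $\langle\bar{x}_2\rangle$ are conjugate in the refinement, they remain conjugate after projecting to either factor), so the two approaches have identical content; yours is more modular, theirs is more self-contained. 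Your reformulation of the equality in \eqref{Nielsen_equalities} as injectivity of $\tilde\gamma$ is also the right mechanism: when $\tilde\gamma$ is injective, each $\omega=\tilde\gamma(\mathcal{O}_i)$ receives only the contribution of the single essential orbit $\mathcal{O}_i$, so $L_m(f;\gamma^*\chi_\omega)=\mathrm{ind}_m(f;\mathcal{O}_i)\neq 0$, and combining with \Cref{Nielsen_counting} forces equality. One small caveat: your claim that $(x,s)\mapsto(x,1-s)$ is a homeomorphism $M_f\to M_{f^{-1}}$ presupposes $f$ is a homeomorphism, whereas in this section $f$ is only a self-map (or, in the application, a homotopy equivalence whose ``inverse'' is a homotopy inverse). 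The correct statement is that $M_f$ and $M_{f^{-1}}$ are homotopy equivalent, inducing an isomorphism $\pi_1(M_f)\cong\pi_1(M_{f^{-1}})$ carrying $\varphi$ to $-\varphi$; this is all you use, and Nielsen numbers are homotopy invariants, so the argument goes through unchanged.
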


\begin{proof} 
Let $G = \pi_1(M_f)$ and write $\varphi \colon G \to \Z$ to denote the induced character, $t \in G$ the stable letter and $K = \mathrm{ker}\varphi$ the fibre subgroup as before. Since $G$ is conjugacy separable, for each $m \geq 1$ there exists a finite quotient $\tilde{\pi}_m \colon G \to \tilde{Q}_m$, such that for all $m$-periodic orbits of $f$ and $f^{-1}$, the corresponding distinct conjugacy classes in $G$ are mapped to distinct conjugacy classes in $\tilde{Q}_m$. 

By the discussion directly following the statement of \cref{Nielsen_counting}, the inequality provided by \cref{Nielsen_counting} is achieved when the conjugacy classes corresponding to the essential $m$-periodic orbits of $f$ are mapped to distinct $\widehat{\Z}$-conjugacy classes in the finite quotient. Hence, it suffices to find a finite quotient $\pi_m \colon G \to Q_m$ such that $\tilde{\pi}_m$ factors through $\pi_m$, and which satisfies the following property. If $x_1$ and $x_2$ are two elements of $G$ which correspond to $m$-periodic orbits of $f$, or of $f^{-1}$, and if $\langle \pi_m(x_1) \rangle$ and $\langle \pi_m(x_2) \rangle$ are conjugate in $Q_m$, then in fact the elements $\pi_m(x_1)$ and $ \pi_m(x_2)$ are  conjugate in $Q_m$. This will then imply that $\tilde{\pi}_m(x_1)$ and $ \tilde{\pi}_m(x_2)$ are conjugate in $\tilde{Q}_m$, since $\tilde{\pi}_m$ factors through $\pi_m$. Hence $x_1$ and $x_2$ are conjugate in $G$, showing that the required property holds for $\pi_m$.

To construct $Q_m$ note that the $m$-periodic orbits of $f$ correspond to elements in the coset $Kt^m$ of $G$, and the $m$-periodic orbits of $f^{-1}$ to the elements in the coset $Kt^{-m}$. If $\widebar{K}$ and $\bar{t}$ are the images of $K$ and $t$ in a finite quotient of $G$, then the coset $\widebar{K}\bar{t}^m$ is invariant under conjugation by elements in the quotient group. Hence, it suffices to find $Q_m$ such that that the cyclic subgroups generated by $\bar{x}_1$ and $\bar{x}_2$, for any $x_1, x_2 \in Kt^m$, intersect $\widebar{K}\bar{t}^m$ exactly at $\bar{x}_1$ and $\bar{x}_2$, respectively. It will then follow that if $\langle\bar{x}_1 \rangle$ and $\langle \bar{x}_2 \rangle$ are conjugate, then $\bar{x}_1$ and $\bar{x}_2$ are conjugate. The details of this construction are spelled out in the proof of Lemma~8.6 in \cite{Liu2023}. \end{proof}

We will also need the following proposition from representation theory of finite groups (see e.g. \cite[Section~12.4]{Serre1977}). We refer the reader to \cite[Lemma~8.4]{Liu2023} for the proof of this result rephrased in the language of $\widehat{\Z}$-conjugacy classes.

\begin{prop}\label{basis}
Let $K$ be a finite group. The set of irreducible finite-dimensional characters of $K$ over $\mathbb{Q}$ forms a basis for the space of maps $\mathrm{Orb}(K) \to \mathbb{Q}$ which are constant on $\widehat{\Z}$-conjugacy classes of $K$.
\end{prop}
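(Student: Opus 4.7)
The plan is to establish three facts about the set $\mathcal{X}$ of irreducible finite-dimensional characters of $K$ over $\QQ$: (a) every $\chi \in \mathcal{X}$ indeed lies in the target space, i.e., is constant on $\widehat{\Z}$-conjugacy classes; (b) the family $\mathcal{X}$ is $\QQ$-linearly independent; and (c) the cardinality of $\mathcal{X}$ matches the $\QQ$-dimension of the target space.

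First I would prove (a) by a Galois argument. Let $n = |K|$. For a $\QQ$-valued irreducible character $\chi$, the Galois group $\mathrm{Gal}(\QQ(\zeta_n)/\QQ) \cong (\ZZ/n)^{\times}$ acts on complex character values via $\sigma_k \cdot \chi(g) = \chi(g^k)$. Since $\chi$ is fixed by this action, $\chi(g^k) = \chi(g)$ whenever $k$ is coprime to the order of $g$. Combined with the fact that $\chi$ is a class function, this shows $\chi$ is constant on conjugacy classes of cyclic subgroups, i.e.\ on $\widehat{\Z}$-conjugacy classes.

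Next, for (b), I would pass to $\CC$ and use that every irreducible $\QQ$-character of $K$ is a positive rational multiple (the Schur index) of a single Galois-orbit sum of irreducible $\CC$-characters, and distinct $\QQ$-irreducibles correspond to disjoint Galois orbits. Since the irreducible $\CC$-characters are linearly independent over $\CC$, sums over disjoint Galois orbits are linearly independent over $\CC$ and hence over $\QQ$, giving linear independence of $\mathcal{X}$.

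The last and main step (c) is a dimension count, which I view as the key obstacle. The $\QQ$-dimension of the target space equals the number of $\widehat{\Z}$-conjugacy classes, which by definition is the number of conjugacy classes of cyclic subgroups of $K$. By the classical theorem of Berman--Witt, this quantity is also the number of irreducible $\QQ$-representations of $K$, hence $|\mathcal{X}|$. If one prefers to avoid citing Berman--Witt directly, the same count can be obtained by noting that the $\mathrm{Gal}(\overline{\QQ}/\QQ)$-action on the complex character table has the same number of orbits on columns (conjugacy classes, grouped into $\widehat{\Z}$-classes by the action $[g] \mapsto [g^k]$) as on rows (irreducible $\CC$-characters, grouped into $\QQ$-irreducibles), since the Galois action on one side is the transpose of the action on the other. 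Combining (a), (b), and (c) yields the claim.
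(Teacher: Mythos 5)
Your proof is correct, but note that the paper does not actually supply an argument for this proposition --- it simply points to Serre, \emph{Linear Representations of Finite Groups}, Section~12.4, and to Liu's Lemma~8.4 for the statement rephrased in terms of $\widehat{\Z}$-conjugacy. So there is no ``paper proof'' to compare against; you have written out a self-contained argument where the authors chose to cite.

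That said, your three-step plan is exactly the standard route behind those references, and each step checks out. In (a), the identity $\sigma_k(\chi(g)) = \chi(g^k)$ together with $\chi$ being a class function does show invariance on $\widehat{\Z}$-conjugacy classes (one small point: you only need $k$ coprime to the order of $g$, and any such $k$ can be lifted by CRT to a $k'$ coprime to $|K|$ with $g^k = g^{k'}$, so the Galois argument applies). In (b), the description of an irreducible $\QQ$-character as the Schur index (a positive \emph{integer}, which is what you want to say rather than ``positive rational'') times a Galois-orbit sum of $\CC$-irreducibles, with distinct $\QQ$-irreducibles giving disjoint orbits, correctly yields linear independence. In (c), the equality of the number of irreducible $\QQ$-characters and the number of conjugacy classes of cyclic subgroups is Berman--Witt; your alternative sketch via the Galois action being ``the transpose'' on rows and columns of the character table is also fine, but to turn it into a proof you are implicitly invoking Brauer's permutation lemma (two permutation actions intertwined by a nonsingular matrix over a field of characteristic zero have the same number of orbits). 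It would be worth naming that lemma explicitly if you want (c) to stand on its own rather than reduce to citing Berman--Witt after all. With that caveat noted, the argument is sound and gives a more explicit account than the paper's citation.
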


Let $X_A$ and $X_B$ be topological spaces as before, with self-maps $f_A \colon X_A \to X_A$ and $f_B \colon X_B \to X_B$. We write $G_A = \pi_1(M_{f_A})$ and $G_B = \pi_1(M_{f_B})$, and let $\psi_A \colon G_A \to \Z$ and $\varphi_B \colon G_B \to \Z$ be the induced characters. 

\begin{lemma}\label{Nielsen number invariance}
    Suppose that $G_A$ and $G_B$ are conjugacy separable. Let $\Theta \colon \widehat{G}_A \to \widehat{G}_B$ be an isomorphism such that for every $\Theta$-corresponding pair of finite quotients $\gamma_B \colon G_B \twoheadrightarrow Q$ and $\gamma_A \colon G_A \twoheadrightarrow Q$ (see \Cref{corresponding quotients}), and all representations $\rho \colon Q \to \mathrm{GL}(k, \mathbb{Q})$, we have
    \[\{\tau^{\psi_A,\rho \gamma_A}_{G_A},\tau^{-\psi_A,\rho \gamma_A}_{G_B} \}=\{\tau^{\varphi_B,\rho\gamma_B}_{G_B},\tau^{-\varphi_B,\rho \gamma_B}_{G_B} \}. \] 
    Then, for every $m \in \mathbb{N}$,
    \[ \{N_m(f_A) , N_m(f_A^{-1}) \} = \{ N_m(f_B), N_m(f_B^{-1})\}.\]
\end{lemma}

\begin{proof}
Let $m \in \mathbb{N}$. Invoke \Cref{Nielsen_equiv} to obtain a finite quotient $\gamma_B \colon G_B \to Q_m$ such that 
\[N_m(f_B^{\pm}) = \#\{\omega \in \Omega(Q_m) \mid L_m(f_B^{\pm}; \gamma_B^{*}\chi_{\omega}) \neq 0\}.\]
By \Cref{basis}, for every $\omega \in \Omega(Q_m)$, $\chi_{\omega}$ can be expressed uniquely as a $\mathbb{Q}$-linear combination $\chi_{\omega} = \sum_i \lambda_i \chi_{\rho_i}$, where each $\rho_i \colon Q_m \to \mathrm{GL}(k_i, \mathbb{Q})$ is an irreducible representation, and $\lambda_i \in \mathbb{Q}$. Hence 
\[L_m (f_B; \gamma_B^{\ast}\chi_{\omega}) = \sum_i \lambda_i L_m(f_B; \gamma_B^{*}\chi_{\rho_i}).\]
Let $\gamma_A$ be the map obtained by composing 
\[ G_A \xrightarrow{\iota} \widehat{G}_A  \xrightarrow{\widehat{\gamma_B}} Q,\]
where $\iota \colon G_A \to \widehat{G}_A$ is the natural inclusion. In particular, $\gamma_A$ and $\gamma_B$ are $\Theta$-corresponding, and thus by our assumption, for every representation $\rho_i \colon Q_m \to \mathrm{GL}(k_i, \mathbb{Q})$ we have that
    \[\{\tau^{\psi_A,\rho_i \gamma_A}_{G_A},\tau^{-\psi_A,\rho_i \gamma_A}_{G_A} \}=\{\tau^{\varphi_B,\rho_i\gamma_B}_{G_B},\tau^{-\varphi_B,\rho_i \gamma_B}_{G_B} \}. \] 
\smallskip

By \cref{Jiang} it follows that, up to multiplication by monomials in $t$,
\[\tau^{\psi_A,\rho_i \gamma_A}_{G_A} (t) \doteq  1 + L_1(f_A; \gamma_A^{*}\chi_{\omega})t + \sum_{i=2}^{\infty} a_i t^i,\]
where for every $i \geq 2$, the coefficient $a_i$ is of the form \[a_i = \frac{1}{i} L_i(f_A; \gamma_A^{*}\chi_{\omega}) + C_i,\]
with $C_i$ a constant term obtained from the numbers $L_k(f_A; \gamma_A^{*}\chi_{\omega})$, $k < i$. Similarly, 
\[\begin{split} \tau^{-\psi_A,\rho_i \gamma_A}_{G_A} (t) &\doteq  1 + L_1(f_A^{-1}; \gamma_A^{*}\chi_{\omega})t + \sum_{i=2}^{\infty} b_i t^i, \\  
b_i &= \frac{1}{i} L_i(f^{-1}_A; \gamma_A^{*}\chi_{\omega}) + D_i,\end{split} \]
and each $D_i$ is a constant term which only depends on the numbers $L_k(f_A^{-1}; \gamma_A^{*}\chi_{\omega})$, $k < i$. Note that the coefficients $a_i$ and $b_j$ are non-zero for only finitely many values of $i$ and $j$. Furthermore, the analogous equalities hold true for $\tau^{\varphi_B,\rho_i\gamma_B}_{G_B}$ and $\tau^{-\varphi_B,\rho_i \gamma_B}_{G_B}$. 
\smallskip 

Hence, by comparing the coefficients of the powers of $t$ in the expansions of the Redemeister torsions, it follows that for each $\rho_i$, \[\{L_m(f_B ; \gamma_B^{*}\chi_{\rho_i}), L_m(f_B^{-1} ; \gamma_B^{*}\chi_{\rho_i})\} = \{L_m(f_A ; \gamma_A^{*}\chi_{\rho_i}), L_m(f_A^{-1} ; \gamma_A^{*}\chi_{\rho_i})\}. \] Thus, \begin{gather*}L_m(f_B ; \gamma_B^{*}\chi_{\omega}) +L_m(f_B^{-1} ; \gamma_B^{*}\chi_{\omega}) = L_m(f_A ; \gamma_A^{*}\chi_{\omega}) + L_m(f_A^{-1} ; \gamma_A^{*}\chi_{\omega}), \text{ and}\\ 
L_m(f_B ; \gamma_B^{*}\chi_{\omega})L_m(f_B^{-1} ; \gamma_B^{*}\chi_{\omega}) = L_m(f_A ; \gamma_A^{*}\chi_{\omega})L_m(f_A^{-1} ; \gamma_A^{*}\chi_{\omega}).
\end{gather*}
Solving the above equations, we obtain
\[\{L_m(f_B ; \gamma_B^{*}\chi_{\omega}), L_m(f_B ; \gamma_B^{*}\chi_{\omega})\} = \{L_m(f_A ; \gamma_A^{*}\chi_{\omega}), L_m(f_A^{-1} ; \gamma_A^{*}\chi_{\omega})\}. \]
Now, 
\[\begin{split}
       N_m(f_B) + N_m(f^{-1}_B) &= \#\{\omega \in \Omega(Q_m) : L_m(f_B, \gamma_B^{*}\chi_{\omega}) \neq 0\}\\
      &     +\#\{\omega \in \Omega(Q_m) : L_m(f_B^{-1}, \gamma_B^{*}\chi_{\omega}) \neq 0\} \\
     &= \#\{\omega \in \Omega(Q_m) : L_m(f_A, \gamma_A^{*}\chi_{\omega}) \neq 0\}\\ 
     &    + \#\{\omega \in \Omega(Q_m) : L_m(f_A^{-1}, \gamma_A^{*}\chi_{\omega}) \neq 0\} \\ 
     &\leq N_m(f_A) + N_m(f_A^{-1}),
\end{split} \]
where the last inequality follows from Lemma~\ref{Nielsen_counting}. The same argument shows that $N_m(f_A) + N_m(f_A^{-1}) \leq N_m(f_B) + N_m(f_B^{-1})$. Hence $N_m(f_A) + N_m(f_A^{-1}) = N_m(f_B) + N_m(f_B^{-1})$. Similarly, we get that $N_m(f_B)\cdot N_m(f_B^{-1}) = N_m(f_A)\cdot N_m(f_A^{-1}).$ It follows that \[\{N_m(f_A), N_m(f_A^{-1})\} = \{N_m(f_B), N_m(f_B^{-1})\}.\qedhere\]\end{proof}

Combining Corollary~\ref{profinite R Torsion} with Lemma~\ref{Nielsen number invariance} and Proposition~\ref{train_track}, we obtain the following theorem.

\begin{thm}[Profinite invariance of Nielsen numbers and stretch factors]\label{Nielsen numbers equality}
Let $G_A$ and $G_B$ be conjugacy separable free-by-cyclic groups with an isomorphism $\Theta \colon \widehat{G}_A \to \widehat{G}_B.$ Let $\varphi_B \in H^1(G_B, \mathbb{Z})$ be primitive and fibred, and let $\psi_A \in H^1(G_A, \mathbb{Z})$ be the primitive fibred class which is the pullback of $\varphi_B$ via $\Theta$. Let $(f^{\pm}_A, \Gamma_A)$ and $(f_B^{\pm}, \Gamma_B)$ be the corresponding relative train track representatives with stretch factors $\lambda_{f_A^{\pm}}$ and $\lambda_{f_B^{\pm}}$, respectively. Then, for all $m \in \mathbb{N}$, 
\begin{gather*}\{N_m(f_A) , N_m(f_A^{-1}) \} = \{ N_m(f_B), N_m(f_B^{-1})\}, \text{ and }\\ \{\lambda_{f_A}, \lambda_{f_A^{-1}} \}= \{\lambda_{f_B}, \lambda_{f_B^{-1}}\}.\end{gather*}
\end{thm}

We now have everything we need to prove \Cref{thmx:invariants}.  Note this is a slightly more general formulation than in the introduction and this introduction version follows from the below and \Cref{mu unit}.

\medskip
\setcounter{thmx}{1}
\begin{thmx} \label{thmx:invariants}
%
    Let $G_A$ and $G_B$ be free-by-cyclic groups with a $\widehat \Z$-regular isomorphism $\Theta \colon \widehat{G}_A \to \widehat{G}_B.$ Let $\varphi_B \in H^1(G_B, \mathbb{Z})$ be primitive and fibred, and let $\psi_A \in H^1(G_A, \mathbb{Z})$ be the primitive fibred class which is the pullback of $\varphi_B$ via $\Theta$.  Let $F_A$ be the fibre of $\psi_A$ in $G_A$ and let $F_B$ be the fibre of $\varphi_B$ in $G_B$.  Then,
    \begin{enumerate}
        \item $\rank F_A=\rank F_B$; \label{thmB1}
        \item the homological stretch factors are equal $\{\nu^+_{\psi_A},\nu^-_{\psi_A}\}=\{\nu^+_{\varphi_B},\nu^-_{\varphi_B}\}$; \label{thmB2}
        \item the characteristic polynomials of the actions on the fibres are equal, $\{\Char\Psi^+_A,\Char\Psi^-_A\}\doteq\{\Char{\Phi^+_B},\Char{\Phi^-_B}\}$; \label{thmB3}
        \item for each representation $\rho\colon G_A\to \GL(n, \QQ)$ factoring through a finite quotient, the twisted Alexander polynomials $\{\Delta^{\psi_A,\rho},\Delta^{-\psi_A,\rho}\}\doteq\{\Delta^{\varphi_B,\rho}_n,\Delta^{-\varphi_B,\rho}_n\}$ and the twisted Reidemeister torsions $\{\tau^{\psi_A,\rho},\tau^{-\psi_A,\rho}\}=\{\tau^{\varphi_B,\rho},\tau^{-\varphi_B,\rho}\}$ over $\QQ$ are equal. \label{thmB4}
    \end{enumerate}
    Moreover, if $G_A$ and $G_B$ are conjugacy separable, (e.g. if $G_A$ and $G_B$ are hyperbolic), then $\widehat G$ also determines the Nielsen numbers of $\psi_A$ and $\varphi_B$ and the homotopical stretch factors $\{\lambda^+_{\psi_A},\lambda^-_{\psi_A}\}=\{\lambda^+_{\varphi_B},\lambda^-_{\varphi_B}\}$.
\end{thmx}
\begin{proof}
%
With this setup we have that
\cref{thmB1} is given by \Cref{fibre iso}; \cref{thmB2} is given by \Cref{thm:homostretch}; \cref{thmB3} follows from (4) and the fact that we can identify $\Char{\Phi^\pm}$ with $\Delta^{\pm\varphi,\mathbf{1}}_1$; \cref{thmB4} is given by \Cref{profinite AP}. The final statement follows by \Cref{Nielsen numbers equality}.
\end{proof}

\section{Almost profinite rigidity for free-by-cyclic groups}\label{sec.proofmain}

The aim of this section is to prove \Cref{thmx:Irr}. We reproduce the statement below.  Before we prove the theorem we collect some facts.

\begin{lemma}\label{lem:finite.monodromy}
    Let $G_A$ and $G_B$ be free-by-cyclic groups with finite and infinite order monodromies respectively.  Then, $\widehat{G}_A$ is not isomorphic to $\widehat{G}_B$.
\end{lemma}
\begin{proof}
    Suppose for contradiction that such an isomorphism exists. Note that since the monodromy of $G_B$ has infinite order, the center $Z(G_B)$ of $G_B$ is trivial. Let $G_A = F_m \rtimes_{\phi} \Z$ where $\phi$ represents a finite order outer automorphism. Clearly $m \geq 2$, otherwise $G_A$ is virtually abelian and $G_B$ is a virtually abelian free-by-cyclic group, which contradicts the fact that $G_B$ has trivial center. 
    
    Let $G_A' \leq G_A$ be a finite-index subgroup of $G_A$ so that $G_A'\simeq F_m\times \Z$. Then $\widehat{G}_A'\simeq \widehat{F}_m\times\widehat{\Z}$. Let $H$ be the image of $\widehat{G}_A'$ under the isomorphism $\widehat{G}_A \simeq \widehat{G}_B$. Then, $H \simeq \widebar{G}_B' \simeq \widehat{G}_B'$, for some finite-index subgroup $G_B' \leq G_B$. Since $Z(G_B')=\{1\}$ we have $Z(\widehat{G}_B')/\overline{Z(G_B')}=Z(\widehat{G}_A')\simeq \widehat{\Z}$.  By \cite[Theorem~7.2]{Lueck1994} we have $b_1^{(2)}(G_B')=b_1^{(2)}(G_A')=b_1^{(2)}(F_m\times \Z)=0$, where $b_1^{(2)}$ denotes the first $\ell^2$-Betti number.  It follows that the dense projection $\pi$ of $G_B'$ to $\widehat{F}_m\leq \widehat{G}_A'$ is not injective.  Indeed, otherwise, by \cite[Corollary~3.3]{BridsonConderReid2016}, we have \[0 = b_1^{(2)}(G_B')\geq b_1^{(2)}(F_m)=m-1\geq 1,\] which is a contradiction.  It follows that $G_B'$ intersects $\ker\pi \leq Z(\widehat{G}'_B)$ non-trivially.  But then, $Z(G_B')\neq \{1\}$ contradicting our original hypothesis.
\end{proof}

\begin{prop}\label{prop:finite.monodromy}
    Let $G$ be a free-by-cyclic group with finite order monodromy and $b_1(G)=1$.  Then, $G$ is almost profinitely rigid amongst free-by-cyclic groups and every free-by-cyclic group in the profinite genus of $G$ has finite order monodromy.
\end{prop}
\begin{proof}
    Let $G_A$ be a free-by-cyclic group with finite order monodromy and first Betti number equal to one, and suppose $G_B$ is a free-by-cyclic group profinitely isomorphic to $G_A$.  By \Cref{lem:finite.monodromy} we may assume $G_B$ has finite order monodromy.  Note $b_1(G_B)=1$.  Now, \Cref{thmx:invariants}\eqref{thmB1} implies that the (uniquely defined) fibre subgroups of $G_A$ and $G_B$ have the same rank --- say $n$.  Since, by \cite{CullerVogtmann1986}, $\Out(F_n)$ has only finitely many conjugacy classes of torsion subgroups, there are only finitely many possibilities for the isomorphism type of $G_B$.
\end{proof}

Recall that an outer automorphism $\Phi \in \mathrm{Out}(F_n)$ is said to be \emph{atoroidal} if there does not exist a non-trivial element $x \in F_n$ and $n \geq 1$ such that $\Phi^n$ preserves the conjugacy class of $x$.

The following proposition is a folklore result which can be traced back to the work of Bestvina--Handel, who proved it for fully irreducible elements of  $\mathrm{Out}(F_n)$ \cite[Theorem~4.1]{BestvinaHandel1992}. A careful proof in the more general setting of expanding free group endomorphisms can be found in the paper of Mutanguha \cite[Theorem~A.4]{Mutanguha2021}.  

\begin{prop}\label{geometric}
    Let $\Phi \in \mathrm{Out}(F_n)$ be an outer automorphism of $F_n$. Suppose that $\Phi$ is infinite-order irreducible and not atoroidal. Then $\Phi$ is induced by a pseudo-Anosov homeomorphim of a once-punctured surface.
\end{prop}

\medskip
\setcounter{thmx}{0}
\begin{thmx} \label{thmx:Irr}
    Let $G$ be an irreducible free-by-cyclic group.  If $b_1(G)=1$, then $G$ is almost profinitely rigid amongst irreducible free-by-cyclic groups.
\end{thmx}

\begin{proof}
    Let $G_A$ be a free-by-cyclic group with $b_1(G_A) = 1$ and irreducible monodromy $\Phi$. Let $G_B$ be another free-by-cyclic group with irreducible monodromy $\Psi$ and suppose that $\widehat{G}_A \cong \widehat{G}_B$. If the monodromy $\Psi$ has finite order, then we are done by \Cref{prop:finite.monodromy}.  
    
    Assume $\Psi$ has infinite order. Note that by \Cref{thmx:hyperbolicity}, $\Phi$ is atoroidal if and only if $\Psi$ is atoroidal.
    
    If $\Psi$ is not atoroidal, then by \Cref{geometric}, both $\Phi$ and $\Psi$ are induced by pseudo-Anosov homeomorphisms of compact surfaces. Thus, $G_A$ and $G_B$ are fundamental groups of compact hyperbolic 3-manifolds and the result holds by \cite[Theorem~9.1]{Liu2023}.     

    Finally, suppose that $\Phi$ is atoroidal.  Hence $G_A$ and $G_B$ are Gromov hyperbolic free-by-cyclic groups. By \cite{HagenWise2015}, $G_A$ and $G_B$ are virtually compact special, and thus by \cite{Minasyan2006} they are conjugacy separable. Furthermore, $b_1(G_B) = 1$ since Betti numbers are invariants of profinite completions. Thus by \Cref{mu unit}, the isomorphism $\widehat
{G}_A \to \widehat{G}_B$ is $\widehat
\Z$-regular. Hence by \Cref{Nielsen numbers equality}, the sets of stretch factors $\{\lambda_{\Phi}, \lambda_{\Phi^{-1}}\}$ of $\Phi^{\pm 1}$ and $\{\lambda_{\Psi}, \lambda_{\Psi^{-1}}\}$ of $\Psi^{\pm 1}$ are equal.  Moreover, again by \Cref{Nielsen numbers equality}, the ranks of the corresponding fibres are equal. The result now follows from \Cref{min}.
\end{proof}

\subsection{Applications}

We conclude this section with the applications of \Cref{thmx:Irr}, \Cref{thmx:invariants} and \cref{thmx:hyperbolicity}.

\smallskip
\setcounter{thmx}{3}
\begin{corx}\label{corx:PV}
    Let $G$ be a super irreducible free-by-cyclic group.  Then, every free-by-cyclic group profinitely isomorphic to $G$ is super irreducible.  In particular, $G$ is almost profinitely rigid amongst free-by-cyclic groups.
\end{corx}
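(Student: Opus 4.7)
The plan is to reduce \Cref{corx:PV} to \Cref{thmx:Irr} by showing that super irreducibility is detected by the profinite completion of a free-by-cyclic group. Once this is established, every free-by-cyclic group $H$ profinitely isomorphic to $G$ is itself super irreducible, hence irreducible (by \cite[Theorem~2.5]{GerstenStallings1991}) and with $b_1(H) = 1$, and \Cref{thmx:Irr} then immediately delivers the stated almost profinite rigidity.

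Concretely, write $G \cong F_n \rtimes_\Phi \Z$ and let $H \cong F_m \rtimes_\Psi \Z$ be any free-by-cyclic group with $\widehat{H} \cong \widehat{G}$. Since super irreducibility of $\Phi$ forces $b_1(G) = 1$ and the first Betti number is a profinite invariant, $b_1(H) = 1$. Then \Cref{thmx:invariants}(1) gives $m = n$, while \Cref{thmx:invariants}(3) yields the equality of unordered pairs
\[
\{\Char \Phi,\, \Char \Phi^{-1}\} \;=\; \{\Char \Psi,\, \Char \Psi^{-1}\}.
\]

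The main step I would then carry out is a short algebraic lemma: for $M \in \GL_n(\QQ)$, super irreducibility of $M$ is equivalent to $\Char(M^k)$ being irreducible over $\QQ$ for every $k \geq 1$. The non-trivial direction is that the absence of a proper $M^k$-invariant subspace forces the minimal polynomial $\mu$ of $M^k$ to be irreducible and to coincide with $\Char(M^k)$; otherwise one extracts an invariant subspace from a non-trivial factor of $\mu$, or, when $\mu$ is irreducible but of degree strictly less than $n$, from a proper $\QQ[x]/(\mu)$-subspace of $\QQ^n$. Since $\Char(M^k)$ is determined by the multiset of roots of $\Char M$ in $\overline{\QQ}$, super irreducibility is a function purely of $\Char M$; moreover, it is stable under replacing $\Char M$ by its reciprocal $\Char(M^{-1})$, because the Galois action on $\overline{\QQ}$ commutes with inversion. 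Combined with the displayed equality, this forces $\Psi$ to be super irreducible, closing the argument.

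The main (and essentially only) hurdle is the algebraic lemma above, which is routine linear algebra but needs to be applied uniformly across all positive powers of the matrix. Once it is in place, \Cref{corx:PV} follows by concatenating \Cref{thmx:invariants}, the Gersten--Stallings criterion for irreducibility, and \Cref{thmx:Irr}.
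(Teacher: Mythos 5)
Your proposal is correct and takes essentially the same approach as the paper's proof: both reduce to \Cref{thmx:Irr} after using \Cref{thmx:invariants} to transport the characteristic polynomial, and both rest on the fact that super irreducibility is a property of $\Char M$ alone (stable under $M \mapsto M^{-1}$). The only difference is that you spell out the linear-algebra lemma (super irreducibility $\iff$ $\Char(M^k)$ irreducible for all $k\geq 1$), which the paper delegates to \cite[Section~2]{GerstenStallings1991}.
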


\begin{proof}
    Let $H$ be a free-by-cyclic group and suppose $\widehat{H}\cong\widehat{G}$. As explained in \cite[Section~2]{GerstenStallings1991} $G$ being super irreducible is a property of the characteristic polynomial of the matrix $M\colon H_1(F_n;\QQ)\to H_1(F_n;\QQ)$ representing the action of $\Phi$ on $H_1(F_n;\QQ)$.  Thus, by \Cref{thmx:invariants} we see $H$ is super irreducible.  The result follows from \Cref{thmx:Irr}.
\end{proof}

\begin{corx} \label{corx:generic}
      Let $G$ be a random free-by-cyclic group.  Then, asymptotically almost surely $G$ is almost profinitely rigid amongst free-by-cyclic groups.
\end{corx}

\begin{proof}
    By \Cref{generic_properties}, every generic free-by-cyclic group $G$ is super-irreducible and has $b_1(G)=1$.  The result follows from  \Cref{corx:PV}.
\end{proof}

\begin{corx}\label{corx:F3}
    Let $G=F_3\rtimes\Z$.  If $G$ is hyperbolic and $b_1(G)=1$, then $G$ is almost profinitely rigid amongst free-by-cyclic groups.
\end{corx}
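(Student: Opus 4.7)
The plan is to bootstrap from \Cref{thmx:Irr} using \Cref{thmx:invariants} and \Cref{thmx:hyperbolicity}. Let $H$ be a free-by-cyclic group with $\widehat H \cong \widehat G$. Since $b_1$ is a profinite invariant, $b_1(H) = 1$; by \Cref{thmx:invariants}(1), $H$ splits as $F_3 \rtimes_\Psi \ZZ$; and by \Cref{thmx:hyperbolicity}, $H$ is Gromov hyperbolic. So it suffices to bound the number of hyperbolic rank-$3$ free-by-cyclic groups with $b_1 = 1$ in the profinite class of $G$.

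The main step is to show that each such $H$ is irreducible, so that \Cref{thmx:Irr} finishes the proof. By Brinkmann's theorem, hyperbolicity of $H$ is equivalent to $\Psi$ having no periodic conjugacy classes. This immediately eliminates any $\Psi$ that preserves or non-trivially permutes a system of rank-$1$ free factors of $F_3$, since some power would then fix a rank-$1$ conjugacy class and hence a periodic conjugacy class in $F_3$. The only surviving reducible option is that $\Psi$ preserves the conjugacy class of a rank-$2$ free factor $A \subseteq F_3$ with $\Psi|_A$ Anosov in $\Out(F_2) \cong \GL_2(\ZZ)$. Writing $F_3 = A \ast \langle c \rangle$, the matrix of $\Psi$ on $H_1(F_3;\QQ)$ is then block upper triangular with a $2 \times 2$ Anosov block and a diagonal entry $\pm 1$; the hypothesis $b_1 = 1$ (that $1$ is not an eigenvalue of this matrix) forces the entry to equal $-1$, so the characteristic polynomial factors as $(t+1)\, p_A(t)$ with $p_A$ the characteristic polynomial of an Anosov matrix in $\GL_2(\ZZ)$.

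To finish, I would argue that this reducible/irreducible dichotomy is detected by the profinite completion. The characteristic polynomial alone does not suffice, since an irreducible $\Phi$ could happen to have characteristic polynomial of the form $(t+1)\, p_A(t)$. Instead I would exploit the finer data of \Cref{thmx:invariants}(4): when $\Psi$ preserves the rank-$2$ factor $A$, the subgroup $A \rtimes \ZZ \leq H$ together with the quotient character $F_3 \onto \langle c \rangle$ induce compatible factorisations of the twisted Alexander polynomials and Reidemeister torsions, a structure that an irreducible monodromy cannot produce. In the residual case that $G$ itself is reducible, the rank-$2$ core $A \rtimes \ZZ$ is a cusped hyperbolic $3$-manifold group, which Liu's theorem \cite{Liu2023} controls up to finitely many choices, while the remaining extension datum $\Psi(c) \in F_3$ is pinned down by the twisted invariants of \Cref{thmx:invariants}(4). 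The principal technical obstacle is this last step, namely, certifying profinitely that reducible monodromies in $\Out(F_3)$ are distinguished from the irreducible ones; this is where I expect the bulk of the work to lie.
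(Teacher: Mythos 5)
Your plan correctly reduces to showing that each $H$ in the profinite class of $G$ is irreducible, and you correctly note that $H$ is hyperbolic of rank $3$ with $b_1(H)=1$. But the argument then goes astray: you dismiss the rank-$1$-factor case using hyperbolicity, yet then treat the case of a preserved rank-$2$ free factor $A$ with $\Psi|_A$ Anosov as a \emph{surviving} possibility compatible with hyperbolicity, and spend the rest of the proof trying (ultimately inconclusively, as you yourself concede) to exclude it profinitely.

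That case does not survive. Every outer automorphism of $F_2$ has a periodic conjugacy class: identifying $\Out(F_2)\cong\GL_2(\ZZ)$ with the extended mapping class group of the once-punctured torus, the boundary class $[a,b]$ is preserved up to conjugacy and inverse by every automorphism --- this is true even for Anosov matrices. Hence \emph{every} group $F_2\rtimes\ZZ$ contains a $\ZZ^2$ subgroup, and so does $\ZZ\rtimes\ZZ$. Thus if $H=F_3\rtimes_\Psi\ZZ$ is reducible, then (after passing to a power) $\Psi$ preserves the conjugacy class of a proper free factor of rank $1$ or $2$, giving a subgroup of a finite-index subgroup of $H$ isomorphic to $\ZZ\rtimes\ZZ$ or $F_2\rtimes\ZZ$, and in either case $H$ contains $\ZZ^2$, contradicting hyperbolicity via \cite{Brinkmann2000}. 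The same applies to $G$. So both are irreducible and \Cref{thmx:Irr} applies directly. This is exactly the paper's one-line argument, and the whole final paragraph of your proposal (detecting reducibility via twisted Alexander polynomials, Liu's theorem for the rank-$2$ core, etc.) is unnecessary --- and, as written, not actually a proof, since you do not carry it out.
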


\begin{proof}
   We first prove $G$ is irreducible.  Suppose that this is not the case. Then $G$ has a subgroup isomorphic to either $\Z\rtimes \Z$ or $F_2\rtimes \Z$. But both possibilities would imply that $G$ contains a $\Z^2$ subgroup contradicting hyperbolicity.  Now let $H$ be a free-by-cyclic group and suppose that $\widehat{H}\cong\widehat{G}$.  By \Cref{thmx:hyperbolicity} we see $H$ is hyperbolic and by \Cref{thmx:invariants} we see that $H$ splits as $F_3\rtimes\Z$.  Thus, the previous paragraph implies $H$ is irreducible.  The result follows from \Cref{thmx:Irr}.
\end{proof}

\begin{corx} \label{corx:F2}
      Let $G=F_2\rtimes\Z$.  If $b_1(G)=1$, then $G$ is profinitely rigid amongst free-by-cyclic groups.
\end{corx}

\begin{proof}
    Let $H$ be a free-by-cyclic group and suppose $\widehat{H}\cong\widehat{G}$.  By \Cref{thmx:invariants} we see that $H\cong F_2\rtimes \Z$.   But each $F_2\rtimes \Z$ is profinitely rigid amongst groups of the form $F_2\rtimes \Z$ by \cite{BridsonReidWilton2017}.
\end{proof}

\begin{remark}
In fact, Theorems A - C apply within a wider class of groups than stated in the hypothesis; namely, we can consider the class of mapping tori of (possibly infinite rank) free group automorphisms (imposing irreducibility if the fibre is finitely generated).  The key point is that by \cite{FeighnHandel1999} any finitely generated group $G$ in this class is finitely presented and has $\-\chi(G)\leq 0$ with equality if and only if the fibre subgroup is finitely generated.  Now, $\chi(G)<0$ if and only if $b_1^{(2)}(G)>0$ by \cite[Theorem~6.80]{Lueck2002}, but the first $\ell^2$-Betti number is a profinite invariant amongst finitely presented groups \cite[Corollary~3.3]{BridsonConderReid2016}.  It follows no \{infinitely generated free\}-by-cyclic group $G$ is profinitely isomorphic to a \{finitely generated free\}-by-cyclic group.
\end{remark}

\section{Profinite conjugacy in \texorpdfstring{$\Out(F_n)$}{Out(Fn)}}\label{sec.proconjugacy}

In this section we show that the stretch factors of atoroidal elements of $\Out(F_n)$ are profinite conjugacy invariants.

\begin{defn}[Profinitely conjugate]
    Let $\Psi,\Phi\in\Out(F_n)$.  We say $\Psi$ and $\Phi$ are \emph{profinitely conjugate} if they induce a pair of conjugate outer automorphisms in $\Out(\widehat{F}_n)$.
\end{defn}

\smallskip

\begin{thmx}\label{thmx:procongruence}
    Let $\Psi\in\Out(F_n)$ be atoroidal.  If $\Phi\in\Out(F_n)$ is profinitely conjugate to $\Psi$, then $\Phi$ is atoroidal and $\{\lambda_\Psi,\lambda_{\Psi^{-1}}\}=\{\lambda_\Phi,\lambda_{\Phi^{-1}}\}$.  In particular, if $\Psi$ is additionally irreducible, then there are only finitely many $\Out(F_n)$-conjugacy classes of irreducible automorphisms which are conjugate with $\Psi$ in $\Out(\widehat{F}_n)$
\end{thmx}

\smallskip 

\begin{proof}
    The first result follows from applying \Cref{thmx:hyperbolicity}, \Cref{thmx:invariants}, and \Cref{prop:procongruency}, the latter of which is proved below.  The ``in particular'' then follows from \Cref{min}.
\end{proof}

\begin{defn}[Aligned isomorphism]
    Let $\Psi,\Phi\in\Out(F_n)$. Write $G_A = F_n \rtimes_{\Psi} \Z$ and $G_B = F_n \rtimes_{\Phi} \Z$ and let $\psi \colon G_A \to \Z$ and $\psi \colon G_B \to \Z$ be the induced characters.  We say that an isomorphism $\Theta\colon\widehat{G}_A\to \widehat{G}_B$ is \emph{aligned} if the following diagram commutes
    \[\begin{tikzcd}
        \widehat{G}_A \arrow[r,"\widehat\psi"] \arrow[d,"\Theta"] & \widehat{\Z} \arrow[d,"\id"]\\
        \widehat{G}_B \arrow [r,"\widehat\varphi"] & \widehat{\Z}.
    \end{tikzcd}\]
    Note that an aligned isomorphism realises $\psi$ as the pullback of $\varphi$ with respect to $\Theta$ with unit $1$ in the sense that $\Theta_\ast(\varphi)=\psi$.
\end{defn}

The following proposition follows \cite[Proposition~3.7]{Liu2023b}.

\begin{prop}\label{prop:procongruency}
    Let $\Phi,\Psi\in\Out(F_n)$.  The following are equivalent:
    \begin{enumerate}
        \item the profinite completions of the free-by-cyclic groups $G_A=F_n\rtimes_\Psi\Z$ and $G_B=F_n\rtimes_\Phi\Z$ are aligned isomorphic; \label{prop:procongruency.1}
        \item the outer automorphisms $\Phi$ and $\Psi$ are profinitely conjugate. \label{prop:procongruency.2}
    \end{enumerate}
\end{prop}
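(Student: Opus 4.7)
The plan is to unpack both directions by working in the (internal) profinite semi-direct product $\widehat{G}_A\cong \widehat{F}_n\rtimes_{\widehat\Psi}\widehat{\Z}$ and the analogous description of $\widehat{G}_B$. The first preparatory step is therefore to justify this semi-direct product decomposition. Since the short exact sequence $1\to F_n\to G_A\to\Z\to 1$ is exact and the fibre is finitely generated and fully separable in $G_A$ (see \Cref{fully_separable} together with the argument of \Cref{fully separable fibres}), the profinite completion functor applied to this sequence stays exact, and the obvious splitting of the sequence over $\Z$ yields an internal splitting over $\widehat{\Z}$. Identifying the closure $\overline{F}_A$ with $\widehat{F}_n$ via \Cref{fibre closure iso}, we obtain $\widehat{G}_A\cong \widehat{F}_n\rtimes_{\widehat\Psi}\widehat{\Z}$, and identically for $\widehat{G}_B$ with $\widehat\Phi$.

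For $(\ref{prop:procongruency.2})\Rightarrow(\ref{prop:procongruency.1})$, suppose $\widehat\Phi$ and $\widehat\Psi$ induce conjugate elements of $\Out(\widehat{F}_n)$. Then there exists $\widehat\alpha\in\Aut(\widehat{F}_n)$ and $g\in\widehat{F}_n$ such that $\widehat\alpha\,\widehat\Psi\,\widehat\alpha^{-1}=\iota_g\widehat\Phi$, where $\iota_g$ denotes conjugation by $g$. Define $\Theta\colon\widehat{G}_A\to\widehat{G}_B$ by $\Theta|_{\widehat{F}_n}=\widehat\alpha$ and $\Theta(t_A)=g\,t_B$. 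A direct computation using the identity above shows that $\Theta$ respects the semi-direct product relations $t_Axt_A^{-1}=\widehat\Psi(x)$ and $t_Byt_B^{-1}=\widehat\Phi(y)$, so it defines a homomorphism; symmetrically one constructs its inverse from $\widehat\alpha^{-1}$. By construction $\Theta$ sends the fibre to the fibre and induces the identity on the quotient $\widehat{\Z}$, so $\Theta$ is aligned.

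For $(\ref{prop:procongruency.1})\Rightarrow(\ref{prop:procongruency.2})$, let $\Theta\colon\widehat{G}_A\to\widehat{G}_B$ be an aligned isomorphism. Because $\Theta$ commutes with the characters $\widehat\psi,\widehat\varphi$, it carries $\ker\widehat\psi=\widehat{F}_n$ to $\ker\widehat\varphi=\widehat{F}_n$, giving $\Theta_F\coloneqq\Theta|_{\widehat{F}_n}\in\Aut(\widehat{F}_n)$. Alignment also forces $\widehat\varphi(\Theta(t_A))=1$, so $\Theta(t_A)=f\,t_B$ for some $f\in\widehat{F}_n$. Applying $\Theta$ to the relation $t_Axt_A^{-1}=\widehat\Psi(x)$ for $x\in\widehat{F}_n$ and simplifying yields
\[\Theta_F\circ\widehat\Psi=\iota_f\circ\widehat\Phi\circ\Theta_F,\]
i.e.\ $\Theta_F\widehat\Psi\Theta_F^{-1}$ and $\widehat\Phi$ differ by the inner automorphism $\iota_f$. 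Hence $[\widehat\Psi]$ and $[\widehat\Phi]$ are conjugate in $\Out(\widehat{F}_n)$, which is exactly procongruent conjugacy of $\Psi$ and $\Phi$.

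The only genuinely non-formal point is the preparatory identification of $\widehat{G}_A$ with $\widehat{F}_n\rtimes_{\widehat\Psi}\widehat{\Z}$; everything afterwards is a direct unwinding of the semi-direct product relations. This identification is supplied by the full separability results already established in \Cref{sec:hyp,sec:regularity}, so no further technical machinery is needed.
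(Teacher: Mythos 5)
Your proof is correct and follows essentially the same route as the paper's: decompose $\widehat{G}_A$ and $\widehat{G}_B$ as profinite semi-direct products and unwind the conjugation relation $t\cdot x\cdot t^{-1}$. The paper is terser, proving $(\ref{prop:procongruency.1})\Rightarrow(\ref{prop:procongruency.2})$ explicitly and dismissing the converse with ``reverse the previous calculation,'' whereas you spell out both directions and make the preparatory semi-direct product splitting of the profinite completion explicit. One small citation note: the fact that the fibre is fully separable (so that $\overline{F}_n\cong\widehat{F}_n$ and $\ker\widehat\psi=\overline{F}_n$) is really \Cref{fully separable fibres}; \Cref{fully_separable} concerns free-by-cyclic or abelian subgroups and does not directly cover the free fibre, and \Cref{fibre closure iso} is about comparing the two fibres across $\Theta$ rather than identifying a closure with a completion. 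These are minor misattributions; the substance is sound.
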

\begin{proof}
    In constructing $G_A$ and $G_B$ we have implicitly picked lifts of $\Phi$ and $\Psi$ to $\Aut(F_n)$ which abusing notation we have also denoted by $\Phi$ and $\Psi$.  Write $G_A=F_n\rtimes_\Psi\langle t_A\rangle$ and $G_B=F_n\rtimes_\Phi\langle t_B\rangle$.  Denote the images of $t_A$ and $t_B$ is $\Out(F_n)$ by $\tau_A$ and $\tau_B$.  Note $\widehat{G}_A=\widehat{F}_n\rtimes\widehat{\langle t_A\rangle}$ and similarly for $G_B$.  Denote the images of $\tau_A$ and $\tau_B$ in $\Aut(\widehat{F}_n)$ by $\widehat\tau_A$ and $\widehat\tau_B$ respectively.
    
    We now prove that \eqref{prop:procongruency.1} implies \eqref{prop:procongruency.2}.  Suppose there is an aligned isomorphism $\Theta\colon \widehat{G}_A\to\widehat{G}_B$ and denote its restriction to $\widehat{F}_n$ by $\Theta_F$.  We have $\Theta(t_A)=t_Bh$ for some $h\in\widehat{F}_n$.  Since $gt_A=t_At_A^{-1}gt_A=t_A\widehat\tau_A(g)$ we have $\Theta_F(g)t_Bh=t_Bh\Theta_0(\widehat\tau_A(g))$.  Let $I_h$ denote the inner automorphism given by conjugation by $h$.  We have $\Theta_F(g)t_B=t_BI_h(\Theta_F(\widehat\tau_A(g))$, and hence, $t_B\widehat\tau_B(\Theta_F(g))=t_BI_h(\Theta_F(\widehat\tau(g)))$ for all $g\in\widehat{F}_n$.  Hence, $\widehat\tau_B=I_h\Theta_F\widehat\tau_A\Theta^{-1}$.  It follows that $\widehat\tau_A$ and $\widehat\tau_B$ are conjugate when projected to $\Out(\widehat{F}_n)$.  Hence, $\Phi$ and $\Psi$ are profinitely conjugate.

    To show \eqref{prop:procongruency.2} implies \eqref{prop:procongruency.1} we reverse the previous calculation to obtain a group isomorphism $\widehat{G}_A\to\widehat{G}_B$.
\end{proof}

\section{Automorphisms of universal Coxeter groups}\label{sec:Wn}

Let $n \geq 2$ be an integer. The \emph{universal Coxeter group of rank $n$} is the free product $W_n$ of $n$ copies of $\Z /2$, 
\[W_n = \bigast_{i=1}^n \Z /2.\] 
A \emph{free basis} of $W_n$ is a collection of $n$ elements $a_1 ,\ldots, a_n$ of $W_n$ of order 2, such that 
\[W_n \cong \langle a_1 \rangle \ast \ldots \ast \langle a_n \rangle.\]

\subsection{Graphs of groups}\label{sec:graphofgroups} 

For further detail and careful proofs of the claims made in this section, the interested reader is referred to \cite{Lyman2022a}. We closely follow the notation established there.

A \emph{graph of groups $(\Gamma, \mathcal{G})$ with trivial edge groups} consists of a connected graph $\Gamma$ and an assignment of a group $\mathcal{G}_v$ to every vertex $v$ of $\Gamma$. The vertex $v$ is said to be \emph{essential} if $\mathcal{G}_v$ is non-trivial. To every graph of groups with trivial edge groups $(\Gamma, \mathcal{G})$ we associate a graph of spaces $X_{\mathcal{G}}$ constructed by attaching a $K(\mathcal{G}_v, 1)$ with a unique vertex $v_0$ to the corresponding vertex $v$ of $\Gamma$. For the sake of brevity, we will sometimes write $\mathcal{G}$ to denote the graph of groups $(\Gamma, \mathcal{G})$. After fixing a basepoint and a spanning tree in $\mathcal{G}$, and immediately suppressing their notation, we write $\pi_1(\mathcal{G})$ to denote the fundamental group of the graph of groups $\mathcal{G}$.

A \emph{morphism} $F$ between graphs of groups $(\Gamma, \mathcal{G})$ and $(\Lambda, \mathcal{H})$ consists of a pair of maps $(f, f_X)$ with the following properties. The first map $f \colon \Gamma \to \Lambda$ sends vertices to vertices, and edges to edge paths. The second map $f_X \colon X_{\mathcal{G}} \to X_{\mathcal{H}}$ is a map of spaces such that the following diagram commutes,
\begin{equation*}\label{}\begin{tikzcd}
    X_{\mathcal{G}} \arrow[r, "f_X"] \arrow[d] & X_{\mathcal{H}} \arrow[d] \\
    \Gamma \arrow[r, "f"] & \Lambda
\end{tikzcd}
\end{equation*}
The vertical maps are the retractions obtained by collapsing the vertex spaces to their basepoints. 

A \emph{homotopy} from the morphism $(f, f_X) \colon (\Gamma, \mathcal{G} ) \to (\Lambda, \mathcal{H})$ to $(f', f'_X) \colon (\Gamma, \mathcal{G} ) \to (\Lambda, \mathcal{H})$ is a collection of morphisms \[\{(f_s, f_{X,s}) \colon \mathcal{G} \to \mathcal{H} : s\in [0,1]\},\] such that $\{f_s\}$ is a homotopy from $f$ to $f'$, and $\{f_{X,s}\}$ is a homotopy from $f_{X}$ to $f_{X}'$.

A morphism $F \colon \mathcal{G} \to \mathcal{H}$ is a \emph{homotopy equivalence,} if there exists a morphism $F' \colon \mathcal{H} \to \mathcal{G}$ 
such that $F \circ F'$ and $F' \circ F$ are homotopic to the identity morphisms. Any homotopy equivalence $H \colon \mathcal{G} \to \mathcal{H}$ induces an isomorphism $H_{*} \colon \pi_1(\mathcal{G}) \to \pi_1(\mathcal{H})$.

We will use the term \emph{combinatorial graph} when we want to emphasise that we are considering a graph with no extra structure.

\subsection{Topological representatives of \texorpdfstring{$\mathrm{Out}(W_n)$}{Out(Wn)} and Nielsen numbers}\label{sec:traintrackWn}

For each $n \geq 2$, define the \emph{thistle with $n$ prickles} to be the graph of groups $\mathcal{T}_n$, where the underlying graph is a tree with one vertex of degree $n$ and $n$ vertices of degree 1, and where each edge and the central vertex are labelled by the trivial group, and where the leaves are labelled by $\Z / 2$. Once and for all, fix the basepoint $\ast$ of $\mathcal{T}_n$ to be the central vertex. Then, there is a natural identification $\pi_1(\mathcal{T}_n, \ast) \simeq W_n$, so that each standard generator of $W_n$ is identified with the path in $\mathcal{T}_n$ given by the concatenation $e \cdot x \cdot \bar{e}$, where $e$ is an edge in $\mathcal{T}_n$ with $i(e) = \ast$ and $x$ is the generator of the group associated to the vertex $\tau(e)$.

Let $\Phi \in \mathrm{Out}(W_n)$. The \emph{standard topological representative} of $\Phi$ is the homotopy equivalence $\rho \colon (\mathcal{T}_n, \ast) \to (\mathcal{T}_n, \ast)$ determined by $\Phi$ and the identification $\pi_1(\mathcal{T}_n, \ast) \simeq W_n$ as above. A \emph{topological representative} of $\Phi$ is a pair $(F, \mathcal{G})$ where $\mathcal{G}$ is a graph of groups together with a homotopy equivalence $\alpha \colon \mathcal{T}_n \to \mathcal{G}$, and $F\colon \mathcal{G} \to \mathcal{G}$ is a homotopy equivalence, such that the following diagram commutes up to homotopy
\[
\begin{tikzcd}
    \mathcal{T}_n \arrow[d, "\alpha"] \arrow[r, "\rho"] & \mathcal{T}_n  \arrow[d, "\alpha"] \\
    \mathcal{G} \arrow[r, "F"]& \mathcal{G}
\end{tikzcd}
\]
where $\rho \colon \mathcal{T}_n \to \mathcal{T}_n$ is the standard representative of $\Phi$. We assume that $f$ is locally injective on the interiors of the edges of $\Gamma$. When we talk of the \emph{transition matrix, maximal filtration} and \emph{exponential strata} of $(F, \mathcal{G})$, we are referring to those objects associated to the underlying graph map $(f, \Gamma)$ (see Section~\ref{TopRep}). In particular, the topological representative $(F, \mathcal{G})$ is said to be \emph{irreducible} if the maximal filtration of the underlying graph map $(f, \Gamma)$ has length one. 

Let $(F, \mathcal{G})$ be a topological representative of $\Phi \in \mathrm{Out}(W_n)$. An \emph{invariant forest} for the representative $(F, \mathcal{G})$, where $F = (f,f_X)$, is an $f$-invariant subgraph $\Gamma_0$ of the underlying graph $\Gamma$, such that each component $C$ of $\Gamma_0$ is a tree and the fundamental group of the sub-graph of groups corresponding to $C$ acts with a global fixed point on its Bass--Serre tree. A forest is said to be \emph{non-trivial} if it contains at least one edge.

The outer automorphism $\Phi \in \mathrm{Out}(W_n)$ is said to be \emph{irreducible}, if every topological representative $(F, \mathcal{G})$ of $\Phi$, where the underlying graph $\Gamma$ has no inessential valence-one vertices and no invariant non-trivial forests, is irreducible. The \emph{stretch factor} of $\Phi$ is the infimum of the stretch factors of irreducible topological representatives of $\Phi$. The outer automorphism $\Phi$ is \emph{fully irreducible} if $\Phi^k$ is irreducible for every $k \geq 1$.

There exists a theory of (improved) relative train track representatives for elements of $\mathrm{Out}(W_n)$ \cite{Lyman2022b} (see also \cite{CollinsTurner1994}, \cite{FrancavigliaMartino2018} and  \cite{Lyman2022a} for earlier results on train tracks on graphs of groups), which is completely analogous to that for elements in $\mathrm{Out}(F_n)$. As in the case of $\mathrm{Out}(F_n)$, the stretch factor of an irreducible outer automorphism $\Phi \in \mathrm{Out}(W_n)$, as defined in the previous paragraph, coincides with the stretch factor of any train track representative. The stretch factor of a general element $\Phi \in \mathrm{Out}(W_n)$ is defined to be the stretch factor of any relative train track representative. 

The proof of the following lemma is completely analogous to the proof of Proposition~\ref{train_track}.

\begin{lemma} Let $\Phi \in \mathrm{Out}(W_n)$ be an outer automorphism of $W_n$ with stretch factor $\lambda$. Let $(F, \mathcal{G})$ be a topological representative of $\Phi$, with underlying graph map $f$. Then 
\[ \lambda = \mathrm{lim}\,\mathrm{sup}_{m \to \infty} N_m(f)^{1/m}.\]
\end{lemma}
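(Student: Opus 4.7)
The plan is to run the proof of Proposition~\ref{train_track} in the setting of graphs of groups, using Lyman's improved relative train track machinery for $\Out(W_n)$ \cite{Lyman2022b} in place of Bestvina--Feighn--Handel. The only novelty is accounting for the essential vertex groups, which are all of order two and hence contribute only a uniformly bounded number of fixed points.

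First I would pass to a suitable power: by Lyman's theorem there exists $k \geq 1$ such that $\Phi^k$ admits a topological representative $(f', \mathcal{G}')$ which is an improved relative train track. Fix a maximal filtration of the underlying graph and let $\{S_i\}_{i \in I}$ be the exponentially-growing strata, with Perron--Frobenius eigenvalues $\lambda_i$ and transition submatrices $A_i$. As in the proof of Proposition~\ref{train_track}, for each exponentially-growing $S_i$ I choose a length assignment $L$ on the edges with $L(e) > 0$ and $L(f'(e)) = \lambda_i \cdot L(e)$ on edges $e$ of $S_i$, so that the number of fixed points of $(f')^m$ in the interior of an edge $e \subseteq S_i$ is the diagonal entry of $A_i^m$ corresponding to $e$.

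The core estimate is that, by the defining properties of improved relative train tracks (which persist in Lyman's graph-of-groups setting), every periodic Nielsen path has period one, at most one indivisible Nielsen path meets each exponentially-growing stratum, and the number of Nielsen classes meeting a non-exponential stratum or containing a vertex stays uniformly bounded as $m \to \infty$. The finite vertex groups contribute only finitely many additional periodic orbits per $m$ (bounded by $2n$ times the number of essential vertices), since each vertex group is isomorphic to $\Z/2$. Consequently there is a constant $C$ depending only on $(f', \mathcal{G}')$ with
\[
\limsup_{m \to \infty} N_m(f')^{1/m} = \limsup_{m \to \infty} \Bigl(C + \sum_{i \in I} \mathrm{tr}(A_i^m)\Bigr)^{1/m} = \max_{i \in I} \lambda_i = \lambda_{f'}.
\]
Thus $N_\infty(\Phi^k)$ equals the stretch factor of $\Phi^k$.

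Finally, I would descend from $\Phi^k$ to $\Phi$. As in Proposition~\ref{train_track}, the analogue of \cite[Corollary~7.14]{FrancavigliaMartino2021} for $\Out(W_n)$ (which is available through Lyman's train track machinery, or by restricting to a characteristic free subgroup of index two and invoking the $F_n$ result together with Proposition~\ref{train_track} and the forthcoming covering argument in Section~\ref{sec:ProfiniteCoxeter}) gives $\lambda_{\Phi^k} = \lambda_\Phi^k$, while $N_\infty(\Phi^k) = N_\infty(\Phi)^k$ is immediate from the definition. Combining these yields $N_\infty(\Phi) = \lambda_\Phi$, which is the equality stated for any topological representative $(f, \mathcal{G})$ since Nielsen numbers are homotopy invariants. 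The main technical obstacle is confirming that the improved relative train track properties used above (essentially, uniform boundedness of the Nielsen classes outside the exponential strata) are recorded in Lyman's setup; this is precisely where we rely on \cite{Lyman2022b}.
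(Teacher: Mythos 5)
Your proposal is correct and follows the same route the paper itself indicates: the paper's ``proof'' of this lemma is literally the one-line remark that the argument is ``completely analogous'' to that of the $\Out(F_n)$ case (\Cref{train_track}), with Lyman's improved relative train tracks replacing Bestvina--Feighn--Handel. You have in fact supplied two details the paper leaves implicit, both of which are needed and correctly handled: (i) the essential vertices carry $\Z/2$ vertex spaces, whose contribution to the periodic orbit count is uniformly bounded and hence absorbed into the constant $C$; and (ii) the analogue of the Francaviglia--Martino power law $\lambda_{\Phi^k}=\lambda_\Phi^k$ for $\Out(W_n)$, which you observe can be obtained either from Lyman's machinery or by restricting to the index-two characteristic free subgroup --- the latter being essentially the covering argument the paper gives in \Cref{stretch factors of W_n auts} (which appears later in the same subsection, not in \Cref{sec:ProfiniteCoxeter} as you wrote, but this is a harmless mislabelling and not circular since that proposition does not depend on the present lemma). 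The closing appeal to homotopy invariance of Nielsen numbers to pass from the chosen relative train track representative to an arbitrary topological representative $(f,\mathcal{G})$ is also the right (and necessary) final step.
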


Before proceeding further, we take a detour to discuss irreducibility of matrices and graphs. 

Let $A \in M_n(\mathbb{Z})$ be a matrix with non-negative integer entries $a_{ij}$. We construct a directed graph $\Gamma_A$ associated to $A$, so that $\Gamma_A$ has $n$ vertices $\{v_1, \ldots, v_n \}$ and there exist $a_{ij}$ directed edges from $v_i$ to $v_j$, for every $i,j \leq n$. The directed graph $\Gamma_A$ is said to be \emph{irreducible}, if for any two vertices $u$ and $v$ of $\Gamma_A$, there exists a directed path from $u$ to $v$. The following is an elementary exercise.

\begin{lemma} The non-negative integer matrix $A$ is irreducible if and only if the associated graph $\Gamma_A$ is irreducible.
\end{lemma}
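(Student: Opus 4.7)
The plan is to reduce both notions of irreducibility to a single combinatorial statement about walks in $\Gamma_A$. The key observation is that the non-negative matrix entries of powers of $A$ count directed walks.

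First, I would prove by induction on $k$ that for every $k \geq 1$ and every pair $i, j$,
\[
(A^k)_{ij} \;=\; \#\{\text{directed walks of length } k \text{ from } v_i \text{ to } v_j \text{ in } \Gamma_A\}.
\]
For $k = 1$ this holds by construction of $\Gamma_A$. For the inductive step one writes
\[
(A^{k+1})_{ij} \;=\; \sum_{\ell=1}^{n} (A^k)_{i\ell}\, a_{\ell j},
\]
and observes that each term counts walks of length $k+1$ from $v_i$ to $v_j$ whose penultimate vertex is $v_\ell$. In particular, $(A^k)_{ij} > 0$ if and only if there exists a directed walk of length $k$ from $v_i$ to $v_j$, and the existence of any such walk is equivalent to the existence of a directed path from $v_i$ to $v_j$ (by shortcutting repeated vertices).

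Granting the combinatorial lemma, both implications are immediate. If $A$ is irreducible, then for every pair $(i,j)$ there exists $k \geq 1$ with $(A^k)_{ij} > 0$, hence a directed walk, and thus a directed path, from $v_i$ to $v_j$; so $\Gamma_A$ is irreducible. Conversely, if $\Gamma_A$ is irreducible then for every $(i,j)$ with $i \neq j$ there is a directed path of some length $k \geq 1$ from $v_i$ to $v_j$, giving $(A^k)_{ij} > 0$. For the diagonal case $i = j$ one needs a closed walk of positive length through $v_i$; assuming $n \geq 2$ this follows from strong connectivity by concatenating a path from $v_i$ to some $v_j$ with a path back, and when $n = 1$ the equivalence reduces trivially to the statement that $\Gamma_A$ has a loop exactly when $a_{11} > 0$.

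There is no substantive obstacle; the only point requiring a moment's care is the diagonal case $i = j$, where matrix irreducibility demands $k \geq 1$ (a closed walk of positive length) rather than the trivial empty walk at $v_i$. This is handled by the observation above.
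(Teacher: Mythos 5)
Your proof is correct, and it is the standard walk-counting argument: the paper itself gives no proof, merely remarking that the lemma is ``an elementary exercise,'' so there is no alternative route to compare against. You correctly identify the one small wrinkle, namely that the paper's definition of matrix irreducibility insists on $k\geq 1$, so for $i=j$ one must produce a closed walk of positive length rather than the empty walk; your handling of this (concatenating a path out and back when $n\geq 2$, and reducing to the presence of a loop when $n=1$) is exactly right.
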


We now prove a crucial lemma on the irreducibility of degree-two covers of directed graphs. In what follows, when we say \emph{path} from $u$ to $v$, we will always mean a directed path. Given an oriented edge $e$ in an oriented graph $\Gamma$, we write $i(e)$ to denote the initial vertex of $e$ in $\Gamma$ and $t(e)$ the terminal vertex.

\begin{lemma}\label{index_2_irred}
Let $\Gamma$ be a directed graph on $n$ vertices, and let $\Gamma'$ be a degree-two cover of $\Gamma$. If $\Gamma$ is irreducible then either $\Gamma'$ is irreducible, or it has two connected components and each is isomorphic to $\Gamma$.

Furthermore, if $\Gamma'$ is irreducible then the Perron--Frobenius eigenvalues of $A_{\Gamma'}$ and $A_\Gamma$ are equal.
\end{lemma}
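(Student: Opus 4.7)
The plan is to combine unique path lifting with the $\Z/2$-deck symmetry of the cover $p\colon \Gamma'\to\Gamma$ in order to analyse the strongly connected components (SCCs) of $\Gamma'$. Let $\tau$ denote the nontrivial deck transformation, which has order two, swaps the two preimages of each vertex, and respects the directed edge structure. Fix a vertex $u$ of $\Gamma$ with preimages $u_1,u_2=\tau(u_1)$, and let $C_i$ be the SCC of $u_i$ in $\Gamma'$. Applying $\tau$ shows $\tau(C_1)=C_2$ and that the existence of a directed path from $u_1$ to $u_2$ forces one from $u_2$ to $u_1$; hence either $C_1=C_2$, or no directed path runs between $u_1$ and $u_2$ at all.

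In the case $C_1=C_2$, I would verify directly that $\Gamma'$ is strongly connected: for any vertex $v^*\in V(\Gamma')$ lying over $v\in V(\Gamma)$, pick any directed cycle $u\to v\to u$ in $\Gamma$ (possible by irreducibility of $\Gamma$) and lift starting at whichever of $u_1,u_2$ is needed to reach $v^*$; then return to $u_1$ using the lift of the $v\to u$ segment together with, if necessary, a directed path from $u_2$ to $u_1$ in $C_1$. In the opposite case, lifting the cycle $u\to v\to u$ at $u_1$ must land back at $u_1$ (landing at $u_2$ is ruled out), so exactly one preimage of each vertex of $\Gamma$ lies in $C_1$, and by applying $\tau$ the other lies in $C_2$. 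Any directed edge between $C_1$ and $C_2$ would, combined with strong connectedness inside each component, produce a directed path from $u_1$ to $u_2$, contradicting the hypothesis of this case. Hence every edge of $\Gamma'$ is internal to $C_1$ or to $C_2$, and the covering property ensures that $p|_{C_i}$ is a graph isomorphism onto $\Gamma$; so $\Gamma'$ has two connected components, each isomorphic to $\Gamma$.

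For the Perron--Frobenius statement, suppose $\Gamma'$ is irreducible and let $v$ be a strictly positive Perron--Frobenius eigenvector of $A_\Gamma$ with eigenvalue $\lambda=\rho(A_\Gamma)$. Define $\tilde v\in\mathbb{R}^{V(\Gamma')}$ by $\tilde v(x)=v(p(x))$. Because $p$ is a covering of directed graphs, for each $x\in V(\Gamma')$ the map $p$ restricts to a bijection between the out-edges of $x$ in $\Gamma'$ and the out-edges of $p(x)$ in $\Gamma$; a routine computation then yields $A_{\Gamma'}\tilde v=\lambda\tilde v$. Since $\tilde v$ is strictly positive and $A_{\Gamma'}$ is irreducible, the uniqueness part of the Perron--Frobenius theorem forces $\lambda=\rho(A_{\Gamma'})$.

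The main obstacle will be the case analysis for the SCCs, specifically justifying the dichotomy and establishing that exactly one preimage of each vertex lies in each of $C_1$ and $C_2$ in the disconnected case; this is precisely where unique path lifting together with the symmetry provided by $\tau$ become indispensable, and where the assumption that the cover has degree two (rather than higher) simplifies the bookkeeping.
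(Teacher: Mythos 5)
Your proof is correct and takes a genuinely different route from the paper's. The paper inducts on the number $N$ of ``special'' edges --- those going from one fixed sheet $V_1 = \{v_i^1\}$ to the other $V_2 = \{v_i^2\}$ --- reducing to a modified cover $\Gamma''$ with $N-1$ special edges by swapping a pair of lifted edges, and then lifting shortest paths back. You instead exploit the nontrivial deck transformation $\tau$ (which exists since a degree-two cover of a connected graph is automatically regular, and respects orientations because they are pulled back from $\Gamma$) together with a strongly-connected-component analysis: the $\Z/2$-symmetry gives the clean dichotomy that $u_1,u_2$ either lie in a common SCC, forcing strong connectivity of $\Gamma'$, or admit no directed paths between them, forcing the two SCCs to be edge-disjoint and to each project isomorphically onto $\Gamma$. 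Your approach is tighter and also sidesteps two subtleties in the paper's inductive step: the assertion that $\Gamma''$ is irreducible once $N\geq 2$ does not follow from the induction hypothesis (with the inherited labelling $\Gamma''$ can still be a disjoint pair of copies of $\Gamma$ while carrying positively many special edges --- for instance a pair of disjoint $3$-cycles double-covering a $3$-cycle may have two special edges), and for the shortest paths $\eta_1,\eta_2$ in $\Gamma''$ to descend to paths in $\Gamma'$ each must avoid both replaced edges $e_1',e_2'$, whereas the minimality argument given rules out only one replaced edge per path. For the Perron--Frobenius statement, your pullback eigenvector $\tilde{v}(x)=v(p(x))$ is the same vector the paper constructs as the concatenation $v_{pf}'$ of two copies of $v_{pf}$, so that part coincides with the paper's proof.
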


\begin{proof} Let $\{v_1, \ldots, v_n\}$ be the vertex set of $\Gamma$. Let $v_i^1$ and $v_i^2$ be the two lifts of $v_i$ in $\Gamma'$, and write $V_1 = \{v_i^1 \mid 1 \leq i \leq n\}$ and $V_2 = \{v_i^2 \mid 1 \leq i \leq n\}$. Let $N$ be the number of edges $e$ in $\Gamma'$ such that $i(e) \in V_1$ and $t(e) \in V_2$. We call such edges \emph{special}. We prove our result by induction on $N$.

If $N=0$ then the lemma is clearly true, since $\Gamma'$ has two connected components and each is isomorphic to $\Gamma$. 

Let $N\geq 1$ and suppose the lemma is true whenever the number of special edges is at most $N-1$. Let $\Gamma' \to \Gamma$ be a degree-two cover with $N$ special edges. Note that since $\Gamma$ is irreducible, for any vertices $v_i$ and $v_j$ of $\Gamma$, there exists a path $\gamma$ from $v_i$ to $v_j$. This path has two lifts $\gamma_1$ and $\gamma_2$ in $\Gamma'$ such that either 
\begin{enumerate}[label=\roman*)]
    \item $\gamma_1$ joins $v_i^1$ to $v_j^1$ and $\gamma_2$ joins $v_i^2$ to $v_j^2$; or
    \item $\gamma_1$ joins $v_i^1$ to $v_j^2$ and $\gamma_2$ joins $v_i^2$ to $v_j^1$.
\end{enumerate}
Hence to prove the lemma it suffices to show that there exists a path in $\Gamma'$ from $v_k^1$ to $v_k^2$, and a path from $v_k^2$ to $v_k^1$, for all $k$. 

Let $e_1$ be a special edge and suppose that $i(e_1) = v_i^1$ and $t(e_1) = v_j^2$, for some $i$ and $j$. Then $\Gamma'$ contains an edge $e_2$ such that  $i(e_2) = v_i^2$ and $t(e_2) = v_j^1$. Construct a graph $\Gamma''$ from $\Gamma'$ by replacing $e_1$ with the edge $e_1'$ which joins $v_i^1$ to $v_j^1$, and replacing $e_2$ with the edge $e_2'$ which joins $v_i^2$ to $v_j^2$. Note that $\Gamma''$ is a degree-two cover of $\Gamma$ with $N-1$ special edges. 

Suppose first that $N = 1$ and fix index $k \leq n$. Since $\Gamma$ is irreducible, there exists a path in $\Gamma$ from $v_k$ to $v_i$. Let $\gamma$ be a shortest such path. Then $\gamma$ has two lifts $\gamma_1$ and $\gamma_2$ in $\Gamma''$. Since $\Gamma''$ has zero special edges, $\gamma_1$ only crosses edges with both endpoints in $V_1$ and $\gamma_2$ only crosses edges with both endpoints in $V_2$ (possibly after swapping $\gamma_1$ and $\gamma_2$). Also by minimality of the length of $\gamma$, the lifts of $\gamma$ do not cross the edges $e_1'$ and $e_2'$. Hence the path $\gamma_1$ descends to a path in $\Gamma'$ joining $v_k^1$ to $v_i^1$. Similarly one constructs a path from $v_j^2$ to $v_{k}^2$ in $\Gamma'$. The concatenation of these two paths and the edge $e_1$ gives a path from $v_k^1$ to $v_k^2$. 

Now assume $N\geq 2$. Then $\Gamma''$ is irreducible and thus there exists a shortest path $\eta_1$ in $\Gamma''$ from $v_k^1$ to $v_i^1$, and a shortest path $\eta_2$ from $v_j^2$ to $v_k^2$. Since $i(e_1') = v_i^1$, any shortest path from $v_k^1$ to $v_i^1$ does not contain $e_1'$. Similarly, any shortest path from $v_j^2$ to $v_k^2$ does not contain $e_2'$. Hence $\eta_1$ and $\eta_2$ descend to paths in $\Gamma'$. The concatenation of these paths, together with the edge $e_1$ give rise to a path from $v_k^1$ to $v_k^2$.  Similarly, one constructs a path from $v_k^2$ to $v_k^1$. Hence the statement holds for $\Gamma'$. This proves the first part of the lemma.

To prove the statement about equality of Perron--Frobenius eiganvalues, suppose that $\Gamma'$ is irreducible. Relabel the vertices of $\Gamma'$ so that for each $i \leq n$, the vertices labelled by $i$ and $i+n$ in $\Gamma'$ are the two lifts of the $i^{th}$ vertex of $\Gamma$. Let $a_{ij}$ and $a'_{ij}$ denote the $(i,j)^{th}$ elements of $A_{\Gamma}$ and $A_{\Gamma'}$, respectively. Since $\Gamma'$ is a degree-two cover of $\Gamma$, it follows that for every $i,j \leq n$,
\begin{equation} \label{entries} a_{ij} = a'_{ij} + a'_{i(j+n)} = a_{(i+n)j}' + a_{(i+n)(j+n)}'. \end{equation}
Let $v_{pf}$ denote the Perron--Frobenius eigenvector of $A_{\Gamma}$ and let $\lambda$ be the Perron--Frobenius eigenvalue. Let $v_{pf}'$ be the vector obtained by concatenating two copies of $v_{pf}$. Then by \eqref{entries},
\[A_{\Gamma'}v_{pf}' = \lambda \cdot v_{pf}'.\] 
Hence the Perron--Frobenius eigenvalue of $A_{\Gamma'}$ is $\lambda$.\end{proof}

Let $W_n$ be the universal Coxeter group with a free basis $\{a_1, \ldots, a_n\}.$ There exists a homomorphism $W_n \twoheadrightarrow \mathbb{Z}/2$ which maps each generator $a_i$ to the non-trivial element of $\mathbb{Z}/2$. The kernel $K \leq W_n$ is the unique torsion-free  index-two subgroup of $W_n$ and thus it is independent of the choice of the free basis. Moreover, $K$ is isomorphic to the free group of rank $n-1$. 

Fix a preferred free basis $X$ of the free group $F_{n-1}$. Let $\iota_X \in \mathrm{Aut}(F_{n-1})$ denote the automorphism which acts by inverting each element of $X$. We call $\iota_X$ the \emph{hyperelliptic involution} of $F_{n-1}$ with respect to $X$. We will write $\iota$ to denote $\iota_X$ when $X$ is clear from the context. Let $[\iota]$ be the image of $\iota$ in $\mathrm{Out}(F_{n-1})$.

\begin{remark}
    For any two choices of free generating sets $X$ and $Y$ of the free group $F$, the outer classes of the hyperelliptic involutions $[\iota_X]$ and $[\iota_Y]$ are conjugate in $\mathrm{Out}(F)$ \cite[Lemma~6.1]{BregmanFullarton2018}. 
\end{remark}

\begin{defn}[\cite{BregmanFullarton2018}]\label{defn:hyperelliptic}
    The \emph{hyperelliptic automorphism group} $\mathrm{HAut}(F_{n-1})$ is the centraliser of $\iota$ in $\Aut(F_{n-1})$. The \emph{hyperelliptic outer automorphism group} $\mathrm{HOut}(F_{n-1})$ of $F_{n-1}$ is the centraliser of $[\iota]$ in $\mathrm{Out}(F_{n-1})$.
\end{defn}

There is a homomorphism $\rho \colon \mathrm{Aut}(W_n) \to \mathrm{Aut}(F_{n-1})$ induced by restricting each automorphism of $W_n$ to the characteristic subgroup $K \leq W_{n-1}$. By \cite[Section~2 ]{Krstic1992}, the map $\rho$ restricts to an isomorphism 
\[\rho \colon \mathrm{Aut}(W_n) \to x^{-1}\,\mathrm{HAut}(F_{n-1}) \, x,\]
for some $x \in \mathrm{Aut}(F_{n-1})$.
Furthermore, the image of the subgroup $\mathrm{Inn}(W_n)$ of inner automorphisms of $W_n$ under $\rho$ is contained in the subgroup $\mathrm{Inn}(F_{n-1}) \cdot \langle \iota \rangle \cap \mathrm{HAut(F_{n-1})}$. Hence there is an isomorphism \[ \mathrm{Aut}(F_{n-1}) / \mathrm{Inn}(F_{n-1}) \to \left.\mathrm{HAut}(F_{n-1}) \right/ (\mathrm{Inn}(F_{n-1}) \cdot \langle \iota \rangle \cap \mathrm{HAut}(F_{n-1})) \]
Moreover, it is easy to see that $\mathrm{HAut}(F_{n-1}) \cap \mathrm{Inn}(F_{n-1}) = 1$, and hence there is an injective map 
\[\mathrm{Out}(W_n) \hookrightarrow \mathrm{HOut}(F_{n-1}) / \langle [\iota] \rangle.\]

It follows that each outer automorphism $\Phi$ in $\mathrm{Out}(W_n)$ defines a coset $\bar{\Phi}\cdot \langle [\iota] \rangle $ in the quotient $\mathrm{Out}(F_{n-1}) / \langle [\iota] \rangle$. Hence, there is a well-defined map $\mathrm{Out}(W_n) \to \mathrm{Out}(F_{n-1})$ which sends $\Phi$ to the outer automorphism $\bar{\Phi}^2$, which we label by $\Phi_K \in \mathrm{Out}(F_{n-1})$, and call the outer automorphism of $F_{n-1}$ \emph{induced} by $\Phi \in \mathrm{Out}(W_n)$.

\begin{prop}\label{stretch factors of W_n auts}

    Let $n \geq 3$ and $\Phi \in \mathrm{Out}(W_n)$ be an outer automorphism with stretch factor $\lambda(\Phi)$. Then, the stretch factor of the induced outer automorphism $\Phi_K \in \mathrm{Out}(F_{n-1})$ is equal to $\lambda(\Phi)^2$.

\end{prop}

\begin{proof}
    Let $(F , \mathcal{G})$ be a bounded relative train track representative of $\Phi^2 \in \mathrm{Out}(W_n)$, where $\mathcal{G} = (\Gamma, \mathcal{G})$ is a graph of groups as before, with the vertex $v_0$ in $\Gamma$ acting as a basepoint, and $F = (f, f_X)$. Let $\{a_1, \ldots, a_n\}$ be a free basis of $W_n$ so that each vertex of the underlying graph $\Gamma$ of $\mathcal{G}$ is labelled by some $\langle a_i \rangle \cong \Z / 2$ or the trivial group. Note that $\Gamma$ is simply connected. Let $K = \langle a_1a_2, a_1a_3, \ldots, a_1a_n \rangle$. 
    
    As before, let $X_{\mathcal{G}}$ denote the graph of spaces associated to $\mathcal{G}$. In particular, we identify $W_n$ with $\pi_1(X_{\mathcal{G}}, v_0, \Gamma)$. Let $\pi \colon Y \to X_{\mathcal{G}}$ be the cover of $X_{\mathcal{G}}$ corresponding to the subgroup $K$. Let $\widetilde{X}$ be a connected lift of $X_{\mathcal{G}}$ to $Y$ with $\tilde{v}_0 \in \widetilde{X}$ a lift of the basepoint $v_0$. 
    
    Since $K$ is a characteristic subgroup, there is a lift of the map $f_X$ to a map $f_Y \colon Y \to Y$ which represents the induced outer automorphism $\Phi_K$.

    Since each $a_i$ is not an element of $K$, the unique length-one loop in $X_{\mathcal{G}}$ contained in the free homotopy class of $a_i \in \pi_1(X_\mathcal{G})$ lifts to an edge with distinct endpoints. The endpoints are the two vertices of $Y$ which project down to the essential vertex labelled by $a_i$.
    
    Note that the morphism $f$ preserves the set of essential vertices. Let $Y'$ be the space obtained from $Y$ by collapsing the edges which join the two lifts of each essential vertex, and the lifts of the two-cells. Then $Y'$ is homotopy equivalent to $Y$, and there is a map $f_{Y'} \colon Y'\to Y'$ which is homotopic to $f_Y$. It follows that $(f_{Y'}, Y')$ is a topological representative of $\Phi_K \in \mathrm{Out}(F_{n-1})$. Then, $Y'$ is a (combinatorial) graph which is obtained by doubling the underlying graph $\Gamma$ of $\mathcal{G}$ along the essential vertices. In particular, the incidence matrix of $f_{Y'}$ gives rise to a directed graph which is an index-two cover of the directed graph associated to the incidence matrix of $f$.

    The relative train track structure of $f$ lifts to a relative train track structure of $f_{Y'}$. If $S$ is a non-zero stratum of $\mathcal{G}$ with stretch factor $\lambda$, then by Lemma~\ref{index_2_irred}, its lift to $Y'$ is either an irreducible stratum with stretch factor $\lambda$ or two irreducible strata, each with stretch factor $\lambda$. Then $\lambda(\Phi_K) = \lambda(\Phi^2) = \lambda(\Phi)^2$.
    
\end{proof}

 \subsection{Profinite invariants and almost rigidity of \{universal Coxeter\}-by-cyclic groups}\label{sec:ProfiniteCoxeter}

A group $G$ is said to be $\{$\emph{universal Coxeter$\}$-by-cyclic} if it fits into the short exact sequence
\[1 \to W_n \to G \to \Z \to 1.\]

For the remainder of this section, we let $(G_A, \varphi)$ and $(G_B, \psi)$ denote \{universal Coxeter\}-by-cyclic groups with fibred characters $\varphi \colon G_A \to \Z$ and $\psi \colon G_B \to \Z$.  We write $G_A = W_n \rtimes_{\Phi} \Z$ and $G_B = W_m \rtimes_{\Psi} \Z$ to denote the splittings of $G_A$ and $G_B$ induced by the characters, and let $K_A \leq G_A$ and $K_B \leq G_B$ be the unique torsion-free
index-two subgroups of the fibres. Recall that there is a well-defined map 
$\mathrm{Out}(W_n) \to \mathrm{Out}(F_{n-1})$ which sends an outer automorphism class $\Phi$ represented by $\phi \in \mathrm{Aut}(W_n)$, to the the outer automorphism class of $\phi^2|_K$, where $K \leq W_n$ is the unique torsion-free index-two subgroup. We write $\Phi_K$ to denote the image of $\Phi$ under this map, and call it the outer automorphism of $F_{n-1}$ \emph{induced by} $\Phi$.

Fix some $t \in \varphi^{-1}(1)$ and $s \in \psi^{-1}(1)$, and let 
\begin{equation}\label{index_2_free_by_z}\begin{split} H_A &= \langle K_A , t^2 \rangle_{G_A} \cong K_A \rtimes_{\Phi_{K_A}} \Z,  \\ H_B &= \langle K_B , s^2 \rangle_{G_B} \cong K_B \rtimes_{\Psi_{K_B}} \Z.\end{split}\end{equation}
We write $\bar{\varphi}$ to denote the character $\varphi \colon G_A \to \Z$ restricted to the subgroup $H_A$, and define $\bar{\psi}$ similarly. We note that the characters $\bar{\varphi}$ and $\bar{\psi}$ induce the splittings \eqref{index_2_free_by_z}.

For a group $G$ and prime $p$ we denote its \emph{pro-$p$ completion} by $\widehat{G}^\mathbf{p}$.  Note this is exactly the inverse limit of the system of finite quotients of order a power of $p$.

\begin{prop}\label{prop.profinite.Wn}
    Let $(G_A,\varphi)$ and $(G_B,\psi)$ be \{universal Coxeter\}-by-cyclic groups, and suppose $\Theta\colon \widehat{G}_A\to\widehat{G}_B$ is an isomorphism.  The following conclusions hold:
    \begin{enumerate}
        \item $\Theta$ is $\widehat{\Z}$-regular;\label{prop.propfinite.Wn.1}
        \item $G_A$ and $G_B$ have isomorphic fibres; \label{prop.propfinite.Wn.2}
        \item the free-by-cyclic groups $(H_A,\bar{\varphi})$ and $(H_B,\bar{\psi})$ satisfy that $\bar{\varphi}$ is the pullback of $\bar{\psi}$ via $\Theta|_{\widehat{H}_A}$; \label{prop.propfinite.Wn.3}
        \item $G_A$ and $G_B$ are good. \label{prop.propfinite.Wn.4}
    \end{enumerate}
\end{prop}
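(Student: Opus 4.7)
The plan is to establish the four claims in the order (1), (3), (2), (4).

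For (1), observe that $W_n$ is virtually free of finite rank, hence of type $\mathsf{FP}_\infty$, so $G_A$ and $G_B$ are $\{$type $\mathsf{FP}_\infty\}$-by-$\Z$. Since $W_n^{\mathrm{ab}} \cong (\Z/2)^n$ is torsion, $G_A^{\mathrm{fab}} \cong \Z$, and similarly for $G_B$, so $b_1(G_A) = b_1(G_B) = 1$. \Cref{mu unit} then produces a unit $\mu \in \widehat{\Z}^\times$ with $\MC(\Theta) = \mu\Z$, giving $\widehat{\Z}$-regularity of $\Theta$.

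The heart of the proof is (3). By \Cref{fibre closure iso} applied to $\Theta$, the restriction $\Theta_F = \Theta|_{\widehat{W}_n}$ is an isomorphism onto $\widehat{W}_m$. I next want to show that $\Theta_F$ sends the characteristic free subgroup $\widehat{K_A}$ to $\widehat{K_B}$. The crucial input is that every torsion element of $\widehat{W}_n$ is conjugate into one of the vertex groups $\langle a_i \rangle$, a classical consequence of the action of $\widehat{W}_n$ on its profinite Bass--Serre tree (which holds for free products of finite groups). Consequently the unique continuous character $\widehat{\alpha}_{0,B} \colon \widehat{W}_m \to \Z/2$ sending every generator of $W_m$ to $1$ maps every order-two element of $\widehat{W}_m$ to $1$. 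Since $\Theta_F$ preserves element orders, $\widehat{\alpha}_{0,B} \circ \Theta_F$ sends each $a_i$ to $1$ and therefore agrees with $\widehat{\alpha}_{0,A}$ on a topological generating set, yielding $\Theta_F(\widehat{K_A}) = \widehat{K_B}$. Hence $\Theta(\widehat{H}_A)$ is an index-two closed subgroup of $\widehat{G}_B$ above $\widehat{K_B}$ satisfying $\Theta(\widehat{H}_A) \cap \widehat{W}_m = \widehat{K_B}$. The index-two closed subgroups of $\widehat{G}_B$ above $\widehat{K_B}$ correspond to the non-trivial continuous characters of $\widehat{G_B/K_B} \cong \Z/2 \times \widehat{\Z}$; the intersection constraint rules out the one whose projection to $\widehat{W}_m$ equals all of $\widehat{W}_m$, leaving two options, both of which are the profinite completions of free-by-cyclic subgroups of $G_B$ of the form $\langle K_B, s' \rangle$ with $s' \in \psi^{-1}(1)$. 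After replacing $s$ by the appropriate $s'$ if necessary, I may assume $\Theta(\widehat{H}_A) = \widehat{H}_B$, and (3) then follows from the definition of the restricted pullback in \Cref{defn:Zhatreg}.

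Part (2) is now immediate: applying \Cref{fibre iso} to the free-by-cyclic groups $(H_A, \bar{\varphi})$ and $(H_B, \bar{\psi})$ with the restricted isomorphism $\Theta|_{\widehat{H}_A}$ yields $K_A \cong K_B$, so $n-1 = m-1$ and therefore $W_n \cong W_m$. For (4), $H_A$ is a finite-index free-by-cyclic subgroup of $G_A$, hence cohomologically good by \Cref{Serre good}; since $G_A$ is residually finite, goodness extends to $G_A$ by the general extension result in Corollary~2.9 of \cite{Lorensen2008}, and the same argument applies to $G_B$.

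The main obstacle is the identification $\Theta_F(\widehat{K_A}) = \widehat{K_B}$: it depends on the torsion-conjugacy theorem for profinite completions of free products of finite cyclic groups, together with a careful analysis of the lattice of index-two closed subgroups of $\widehat{G}_B$ above $\widehat{K_B}$ to pin down the image $\Theta(\widehat{H}_A)$ up to the two admissible choices.
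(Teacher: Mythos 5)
Your proof is correct, but it takes a genuinely different route from the paper's for parts (2) and (3), while (1) and (4) coincide.

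For (3), the paper's proof uses pro-$2$ cohomological dimension: $H_A$ is torsion-free so $\cd_2$ of its pro-$2$ completion is finite, pushing the $\Z/2$-quotient forward via $\Theta$ gives a character $\beta\colon G_B\onto \Z/2$ whose kernel, having profinite completion of finite $\cd_2$, must be torsion-free and hence an $H_B$-type subgroup. You instead work inside $\widehat{W_n}\to\widehat{W_m}$ directly, invoking the torsion-conjugacy theorem for free profinite products of finite groups (a Herfort--Ribes type result; you should cite it, and also cite the fact that $\widehat{W_m}$ is the free profinite product of the vertex groups) to see that the canonical character $\widehat{\alpha}_{0,B}$ kills no involutions, hence $\Theta_F(\widehat{K_A})=\widehat{K_B}$, and then you enumerate the index-two open subgroups of $\widehat{G_B}$ above $\widehat{K_B}$. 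A small simplification worth noting: $\ker\widehat{\alpha}_{0,B}=\overline{K_B}\cong\widehat{K_B}$ is a free profinite group, hence torsion-free, which gives the same conclusion about involutions without invoking the full torsion-conjugacy machinery. Both approaches need the same final bookkeeping about the two admissible index-two subgroups (equivalently, the possible choices of stable letter $s$), which you make explicit and the paper leaves implicit. For (2), the paper adapts the $\FF_2$-twisted Alexander polynomial argument from \Cref{fibre iso} directly to $G_A$ and $G_B$, whereas you obtain it as a corollary of (3) by applying \Cref{fibre iso} to the free-by-cyclic pair $(H_A,\bar\varphi)$, $(H_B,\bar\psi)$; your route is slightly slicker but requires proving (3) first, while the paper proves (2) independently. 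Your citation of \cite[Corollary~2.9]{Lorensen2008} for goodness passing to finite-index overgroups is not quite what that corollary says (the paper uses it only for goodness of free-by-cyclic groups, and asserts the overgroup step without citation), but the underlying fact is standard and the argument is sound.
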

\begin{proof}
    It is easy to see that $G_A$ and $G_B$ satisfy $b_1(G_A)=b_1(G_B)=1$.  Thus, \eqref{prop.propfinite.Wn.1} follows from \Cref{mu unit}.  Note that $b_1(W_n;\FF_2)=n$.  We may prove \eqref{prop.propfinite.Wn.2} by an identical argument to \Cref{fibre iso} but taking the twisted Alexander polynomials over $\FF_2$ instead of an arbitrary prime.  
    
The subgroups $H_A \leq G_A$ and $H_B \leq G_B$ have finite index in their respective overgroups, and are free-by-cyclic. Since goodness passes to finite index overgroups this proves \eqref{prop.propfinite.Wn.4}.
    
    Now, the group $H_A$ is the kernel of a map $\alpha\colon G_A\onto\Z/2$.  We see that $H_A$ is torsion-free and so its pro-$2$ completion has finite cohomological dimension, whereas $G_A$ has $2$-torsion so $\cd_2(\widehat{G}_A^{\mathbf 2})=\infty$ (see \cite[Section 1.1. and Proposition~11.1.5]{Wilson1998} for the definition of $\cd_2$ and the relevant facts).  Completing the map $\alpha$ to $\widehat{G}_A$ we obtain an induced map $\widehat{G}_B\onto\Z/2$ and hence a map $\beta\colon G_B\onto \Z/2$.  Now $\ker\beta$ is torsion-free since $\ker \widehat\beta\cong\ker\widehat\alpha$ and $\cd_2(\ker\widehat\alpha^\mathbf{2})$ is finite.  We have shown that $H_A$ and $H_B$ are profinitely isomorphic free-by-cyclic groups with monodromies $\bar\varphi$ and $\bar\psi$ respectively.  Since $\Theta$ is $\widehat{\Z}$-regular by \eqref{prop.propfinite.Wn.1}, it follows that $\bar\varphi$ is the pullback of $\bar\psi$ via $\Theta|_{\widehat H_A}$.
\end{proof}

\begin{thmx}\label{thmx:CoxeterStretchInvariance}
    Suppose that all free-by-cyclic groups with monodromy contained in $\mathrm{HOut}(F_n)$ (see \cref{defn:hyperelliptic}) for some $n$, are conjugacy separable.  
    
    Let $(G_A,\varphi)$ and $(G_B,\psi)$ be profinitely isomorphic \{universal Coxeter\}-by-cyclic groups. Let $\{\lambda_{A}^+,\lambda_{A}^-\}$ and $\{\lambda_{B}^+,\lambda_{B}^-\}$ be the stretch factors of $(G_A, \varphi)$ and $(G_B, \psi)$, respectively. Then
    \[ \{\lambda_{A}^+,\lambda_{A}^-\}=\{\lambda_{B}^+,\lambda_{B}^-\}. \]
\end{thmx}

\begin{proof}
    The groups $(G_A, \varphi)$ and $(G_B, \psi)$ have isomorphic fibres by \cref{prop.profinite.Wn} \cref{prop.propfinite.Wn.1}, and by \cref{prop.profinite.Wn} \cref{prop.propfinite.Wn.3}, the character $\bar{\varphi} \colon H_A \to \Z$ is the pullback of $\bar{\psi} \colon H_B \to \Z$ under a profinite isomorphism $\widehat{H}_A \to \widehat{H}_B$. Also, by assumption, $(H_A, \bar{\varphi})$ and $(H_B, \bar{\psi})$ are conjugacy separable free-by-cyclic groups. Hence by \Cref{Nielsen numbers equality}, the stretch factors associated to $(H_A, \bar{\varphi})$ and $(H_B, \bar{\psi})$ are equal. Thus by \cref{stretch factors of W_n auts} the stretch factors of $(G_A, \varphi)$ and $(G_B, \psi)$ are equal.
\end{proof}

\bibliographystyle{halpha}
\bibliography{refs.bib}

\begin{thebibliography}{BMRS20}

\bibitem[Bau71]{Baumslag1971}
Gilbert Baumslag.
\newblock Finitely generated cyclic extensions of free groups are residually finite.
\newblock {\em Bull. Austral. Math. Soc.}, 5:87--94, 1971.
\newblock \href{https://dx.doi.org/10.1017/S0004972700046906}{{\ttfamily 10.1017/S0004972700046906}}.

\bibitem[BCR16]{BridsonConderReid2016}
M.~R. Bridson, M.~D.~E. Conder, and A.~W. Reid.
\newblock Determining {F}uchsian groups by their finite quotients.
\newblock {\em Israel J. Math.}, 214(1):1--41, 2016.
\newblock \href{https://dx.doi.org/10.1007/s11856-016-1341-6}{{\ttfamily 10.1007/s11856-016-1341-6}}.

\bibitem[BF18]{BregmanFullarton2018}
Corey Bregman and Neil~J. Fullarton.
\newblock Hyperelliptic graphs and the period mapping on outer space.
\newblock {\em J. Topol.}, 11(1):221--256, 2018.
\newblock \href{https://dx.doi.org/10.1112/topo.12049}{{\ttfamily 10.1112/topo.12049}}.

\bibitem[BF20]{BoileauFriedl2020}
Michel Boileau and Stefan Friedl.
\newblock The profinite completion of 3-manifold groups, fiberedness and the {T}hurston norm.
\newblock In {\em What's next?---the mathematical legacy of {W}illiam {P}. {T}hurston}, volume 205 of {\em Ann. of Math. Stud.}, pages 21--44. Princeton Univ. Press, Princeton, NJ, 2020.
\newblock \href{https://dx.doi.org/10.2307/j.ctvthhdvv.5}{{\ttfamily 10.2307/j.ctvthhdvv.5}}.

\bibitem[BFH00]{BestvinaFeighnHandel2000}
Mladen Bestvina, Mark Feighn, and Michael Handel.
\newblock The {T}its alternative for {${\rm Out}(F_n)$}. {I}. {D}ynamics of exponentially-growing automorphisms.
\newblock {\em Ann. of Math. (2)}, 151(2):517--623, 2000.
\newblock \href{https://dx.doi.org/10.2307/121043}{{\ttfamily 10.2307/121043}}.

\bibitem[BG10]{BridsonGroves2010}
Martin~R. Bridson and Daniel Groves.
\newblock The quadratic isoperimetric inequality for mapping tori of free group automorphisms.
\newblock {\em Mem. Amer. Math. Soc.}, 203(955):xii+152, 2010.
\newblock \href{https://dx.doi.org/10.1090/S0065-9266-09-00578-X}{{\ttfamily 10.1090/S0065-9266-09-00578-X}}.

\bibitem[BH92]{BestvinaHandel1992}
Mladen Bestvina and Michael Handel.
\newblock Train tracks and automorphisms of free groups.
\newblock {\em Ann. of Math. (2)}, 135(1):1--51, 1992.
\newblock \href{https://dx.doi.org/10.2307/2946562}{{\ttfamily 10.2307/2946562}}.

\bibitem[BMRS20]{BridsonMcReynoldsReidSpitler2020}
Martin Bridson, David~Ben McReynolds, Alan Reid, and Ryan Spitler.
\newblock Absolute profinite rigidity and hyperbolic geometry.
\newblock {\em Ann. Math. (2)}, 192(3):679--719, 2020.
\newblock \href{https://dx.doi.org/10.4007/annals.2020.192.3.1}{{\ttfamily 10.4007/annals.2020.192.3.1}}.

\bibitem[BMV07]{BogopolskiMartinoVentura2007}
O.~Bogopolski, A.~Martino, and E.~Ventura.
\newblock The automorphism group of a free-by-cyclic group in rank 2.
\newblock {\em Comm. Algebra}, 35(5):1675--1690, 2007.
\newblock \href{https://dx.doi.org/10.1080/00927870601169333}{{\ttfamily 10.1080/00927870601169333}}.

\bibitem[BR20]{BridsonReid2020}
Martin~R. Bridson and Alan~W. Reid.
\newblock Profinite rigidity, fibering, and the figure-eight knot.
\newblock In {\em What's next?---the mathematical legacy of {W}illiam {P}. {T}hurston}, volume 205 of {\em Ann. of Math. Stud.}, pages 45--64. Princeton Univ. Press, Princeton, NJ, 2020.
\newblock \href{https://dx.doi.org/10.2307/j.ctvthhdvv.6}{{\ttfamily 10.2307/j.ctvthhdvv.6}}.

\bibitem[BR22]{BridsonReid2022}
M.~R. Bridson and A.~W. Reid.
\newblock Profinite rigidity, {K}leinian groups, and the cofinite {H}opf property.
\newblock {\em Michigan Math. J.}, 72:25--49, 2022.
\newblock \href{https://dx.doi.org/10.1307/mmj/20217218}{{\ttfamily 10.1307/mmj/20217218}}.

\bibitem[Bri00]{Brinkmann2000}
P.~Brinkmann.
\newblock Hyperbolic automorphisms of free groups.
\newblock {\em Geom. Funct. Anal.}, 10(5):1071--1089, 2000.
\newblock \href{https://dx.doi.org/10.1007/PL00001647}{{\ttfamily 10.1007/PL00001647}}.

\bibitem[BRW17]{BridsonReidWilton2017}
Martin~R. Bridson, Alan~W. Reid, and Henry Wilton.
\newblock Profinite rigidity and surface bundles over the circle.
\newblock {\em Bull. Lond. Math. Soc.}, 49(5):831--841, 2017.
\newblock \href{https://dx.doi.org/10.1112/blms.12076}{{\ttfamily 10.1112/blms.12076}}.

\bibitem[But13]{Button2013}
J.~O. Button.
\newblock Free by cyclic groups are large, 2013.
\newblock \href{https://arxiv.org/abs/1311.3506}{{\ttfamily arXiv:1311.3506 [math.GR]}}, arXiv:1311.3506 [math.GR].

\bibitem[CT94]{CollinsTurner1994}
D.~J. Collins and E.~C. Turner.
\newblock Efficient representatives for automorphisms of free products.
\newblock {\em Michigan Math. J.}, 41(3):443--464, 1994.
\newblock \href{https://dx.doi.org/10.1307/mmj/1029005072}{{\ttfamily 10.1307/mmj/1029005072}}.

\bibitem[CV86]{CullerVogtmann1986}
Marc Culler and Karen Vogtmann.
\newblock Moduli of graphs and automorphisms of free groups.
\newblock {\em Invent. Math.}, 84(1):91--119, 1986.
\newblock \href{https://dx.doi.org/10.1007/BF01388734}{{\ttfamily 10.1007/BF01388734}}.

\bibitem[CW22]{CheethamWest2022}
Tamunonye Cheetham-West.
\newblock Absolute profinite rigidity of some closed fibered hyperbolic 3-manifolds, 2022.
\newblock \href{https://arxiv.org/abs/2205.08693}{{\ttfamily arXiv:2205.08693 [math.GT]}}, arXiv:2205.08693 [math.GT].

\bibitem[DFPR82]{DixonFormanekPolandRibes1982}
John~D. Dixon, Edward~W. Formanek, John~C. Poland, and Luis Ribes.
\newblock Profinite completions and isomorphic finite quotients.
\newblock {\em J. Pure Appl. Algebra}, 23:227--231, 1982.
\newblock \href{https://dx.doi.org/10.1016/0022-4049(82)90098-6}{{\ttfamily 10.1016/0022-4049(82)90098-6}}.

\bibitem[FH99]{FeighnHandel1999}
Mark Feighn and Michael Handel.
\newblock Mapping tori of free group automorphisms are coherent.
\newblock {\em Ann. of Math. (2)}, 149(3):1061--1077, 1999.
\newblock \href{https://dx.doi.org/10.2307/121081}{{\ttfamily 10.2307/121081}}.

\bibitem[FM15]{FrancavigliaMartino2018}
Stefano Francaviglia and Armando Martino.
\newblock Stretching factors, metrics and train tracks for free products.
\newblock {\em Illinois J. Math.}, 59(4):859--899, 2015.

\bibitem[FM21]{FrancavigliaMartino2021}
Stefano Francaviglia and Armando Martino.
\newblock Displacements of automorphisms of free groups {I}: {D}isplacement functions, minpoints and train tracks.
\newblock {\em Trans. Amer. Math. Soc.}, 374(5):3215--3264, 2021.
\newblock \href{https://dx.doi.org/10.1090/tran/8333}{{\ttfamily 10.1090/tran/8333}}.

\bibitem[FMS21]{FrancavigliaMartinoSyrigos2021}
Stefano Francaviglia, Armando Martino, and Dionysios Syrigos.
\newblock The minimally displaced set of an irreducible automorphism of {$F_N$} is co-compact.
\newblock {\em Arch. Math. (Basel)}, 116(4):369--383, 2021.
\newblock \href{https://dx.doi.org/10.1007/s00013-021-01579-z}{{\ttfamily 10.1007/s00013-021-01579-z}}.

\bibitem[Fun13]{Funar2013}
Louis Funar.
\newblock Torus bundles not distinguished by {TQFT} invariants. {With} an appendix by {Louis} {Funar} and {Andrei} {Rapinchuk}.
\newblock {\em Geom. Topol.}, 17(4):2289--2344, 2013.
\newblock \href{https://dx.doi.org/10.2140/gt.2013.17.2289}{{\ttfamily 10.2140/gt.2013.17.2289}}.

\bibitem[FV08]{FriedlVidussi2008}
Stefan Friedl and Stefano Vidussi.
\newblock Symplectic {$S^1\times N^3$}, subgroup separability, and vanishing {T}hurston norm.
\newblock {\em J. Amer. Math. Soc.}, 21(2):597--610, 2008.
\newblock \href{https://dx.doi.org/10.1090/S0894-0347-07-00577-2}{{\ttfamily 10.1090/S0894-0347-07-00577-2}}.

\bibitem[FV11a]{FriedlVidussi2011survey}
Stefan Friedl and Stefano Vidussi.
\newblock A survey of twisted {A}lexander polynomials.
\newblock In {\em The mathematics of knots}, volume~1 of {\em Contrib. Math. Comput. Sci.}, pages 45--94. Springer, Heidelberg, 2011.
\newblock \href{https://dx.doi.org/10.1007/978-3-642-15637-3\_3}{{\ttfamily 10.1007/978-3-642-15637-3\_3}}.

\bibitem[FV11b]{FriedlVidussi2011annals}
Stefan Friedl and Stefano Vidussi.
\newblock Twisted {A}lexander polynomials detect fibered 3-manifolds.
\newblock {\em Ann. of Math. (2)}, 173(3):1587--1643, 2011.
\newblock \href{https://dx.doi.org/10.4007/annals.2011.173.3.8}{{\ttfamily 10.4007/annals.2011.173.3.8}}.

\bibitem[GH21]{GenevoisHorbez2021}
Anthony Genevois and Camille Horbez.
\newblock Acylindrical hyperbolicity of automorphism groups of infinitely ended groups.
\newblock {\em J. Topol.}, 14(3):963--991, 2021.

\bibitem[GS91]{GerstenStallings1991}
S.~M. Gersten and J.~R. Stallings.
\newblock Irreducible outer automorphisms of a free group.
\newblock {\em Proc. Amer. Math. Soc.}, 111(2):309--314, 1991.
\newblock \href{https://dx.doi.org/10.2307/2048318}{{\ttfamily 10.2307/2048318}}.

\bibitem[Hem14]{Hempel2014}
John Hempel.
\newblock Some 3-manifold groups with the same finite quotients, 2014.
\newblock \href{https://arxiv.org/abs/1409.3509}{{\ttfamily arXiv:1409.3509 [math.GT]}}, arXiv:1409.3509 [math.GT].

\bibitem[HK21]{HennekeKielak2021}
Fabian Henneke and Dawid Kielak.
\newblock Agrarian and {{\(L^2\)}}-invariants.
\newblock {\em Fundam. Math.}, 255(3):255--287, 2021.
\newblock \href{https://dx.doi.org/10.4064/fm808-4-2021}{{\ttfamily 10.4064/fm808-4-2021}}.

\bibitem[HK22]{HughesKielak2022}
Sam Hughes and Dawid Kielak.
\newblock Profinite rigidity of fibring, 2022.
\newblock \href{https://arxiv.org/abs/2206.11347}{{\ttfamily arXiv:2206.11347 [math.GR]}}, to appear in Rev. Mat. Iberoam..

\bibitem[HW08]{HaglundWise2008}
Fr\'{e}d\'{e}ric Haglund and Daniel~T. Wise.
\newblock Special cube complexes.
\newblock {\em Geom. Funct. Anal.}, 17(5):1551--1620, 2008.
\newblock \href{https://dx.doi.org/10.1007/s00039-007-0629-4}{{\ttfamily 10.1007/s00039-007-0629-4}}.

\bibitem[HW15]{HagenWise2015}
Mark~F. Hagen and Daniel~T. Wise.
\newblock Cubulating hyperbolic free-by-cyclic groups: the general case.
\newblock {\em Geom. Funct. Anal.}, 25(1):134--179, 2015.
\newblock \href{https://dx.doi.org/10.1007/s00039-015-0314-y}{{\ttfamily 10.1007/s00039-015-0314-y}}.

\bibitem[Jia83]{Jiang1983}
Bo~Ju Jiang.
\newblock {\em Lectures on {N}ielsen fixed point theory}, volume~14 of {\em Contemporary Mathematics}.
\newblock American Mathematical Society, Providence, R.I., 1983.

\bibitem[Jia96]{Jiang1996}
Boju Jiang.
\newblock Estimation of the number of periodic orbits.
\newblock {\em Pacific J. Math.}, 172(1):151--185, 1996.

\bibitem[JZ20]{Jaikin2020}
Andrei Jaikin-Zapirain.
\newblock Recognition of being fibered for compact 3-manifolds.
\newblock {\em Geom. Topol.}, 24(1):409--420, 2020.
\newblock \href{https://dx.doi.org/10.2140/gt.2020.24.409}{{\ttfamily 10.2140/gt.2020.24.409}}.

\bibitem[Kie20]{Kielak2020polytopes}
Dawid Kielak.
\newblock The {Bieri}-{Neumann}-{Strebel} invariants via {Newton} polytopes.
\newblock {\em Invent. Math.}, 219(3):1009--1068, 2020.
\newblock \href{https://dx.doi.org/10.1007/s00222-019-00919-9}{{\ttfamily 10.1007/s00222-019-00919-9}}.

\bibitem[Krs92]{Krstic1992}
Sava Krsti\'{c}.
\newblock Finitely generated virtually free groups have finitely presented automorphism group.
\newblock {\em Proc. London Math. Soc. (3)}, 64(1):49--69, 1992.
\newblock \href{https://dx.doi.org/10.1112/plms/s3-64.1.49}{{\ttfamily 10.1112/plms/s3-64.1.49}}.

\bibitem[L\"94]{Lueck1994}
Wolfgang L\"{u}ck.
\newblock {$L^2$}-{B}etti numbers of mapping tori and groups.
\newblock {\em Topology}, 33(2):203--214, 1994.
\newblock \href{https://dx.doi.org/10.1016/0040-9383(94)90011-6}{{\ttfamily 10.1016/0040-9383(94)90011-6}}.

\bibitem[Liu23a]{Liu2023}
Yi~Liu.
\newblock Finite-volume hyperbolic 3-manifolds are almost determined by their finite quotient groups.
\newblock {\em Invent. Math.}, 231(2):741--804, 2023.
\newblock \href{https://dx.doi.org/10.1007/s00222-022-01155-4}{{\ttfamily 10.1007/s00222-022-01155-4}}.

\bibitem[Liu23b]{Liu2023b}
Yi~Liu.
\newblock Mapping classes are almost determined by their finite quotient actions.
\newblock {\em Duke Math. J.}, 172(2), 2023.
\newblock \href{https://dx.doi.org/10.1215/00127094-2022-0047}{{\ttfamily 10.1215/00127094-2022-0047}}.

\bibitem[Lor08]{Lorensen2008}
Karl Lorensen.
\newblock Groups with the same cohomology as their profinite completions.
\newblock {\em J. Algebra}, 320(4):1704--1722, 2008.
\newblock \href{https://dx.doi.org/10.1016/j.jalgebra.2008.03.013}{{\ttfamily 10.1016/j.jalgebra.2008.03.013}}.

\bibitem[L{\"{u}}c02]{Lueck2002}
Wolfgang L{\"{u}}ck.
\newblock {\em {$L^2$}-invariants: theory and applications to geometry and {$K$}-theory}, volume~44 of {\em Ergebnisse der Mathematik und ihrer Grenzgebiete. 3. Folge. A Series of Modern Surveys in Mathematics [Results in Mathematics and Related Areas. 3rd Series. A Series of Modern Surveys in Mathematics]}.
\newblock Springer-Verlag, Berlin, 2002.
\newblock \href{https://dx.doi.org/10.1007/978-3-662-04687-6}{{\ttfamily 10.1007/978-3-662-04687-6}}.

\bibitem[Lym22a]{Lyman2022b}
Rylee~Alanza Lyman.
\newblock {CT}s for free products, 2022.
\newblock \href{https://arxiv.org/abs/2203.08868}{{\ttfamily arXiv:2203.08868 [math.GR]}}, arXiv:2203.08868 [math.GR].

\bibitem[Lym22b]{Lyman2022a}
Rylee~Alanza Lyman.
\newblock Train track maps on graphs of groups.
\newblock {\em Groups, Geom. Dyn.}, 16(4):1389--1422, 2022.
\newblock \href{https://dx.doi.org/10.4171/ggd/698}{{\ttfamily 10.4171/ggd/698}}.

\bibitem[Min06]{Minasyan2006}
Ashot Minasyan.
\newblock On residual properties of word hyperbolic groups.
\newblock {\em J. Group Theory}, 9(5):695--714, 2006.
\newblock \href{https://dx.doi.org/10.1515/JGT.2006.045}{{\ttfamily 10.1515/JGT.2006.045}}.

\bibitem[Mut21]{Mutanguha2021}
Jean~Pierre Mutanguha.
\newblock Irreducibility of a free group endomorphism is a mapping torus invariant.
\newblock {\em Comment. Math. Helv.}, 96(1):47--63, 2021.
\newblock \href{https://dx.doi.org/10.4171/cmh/506}{{\ttfamily 10.4171/cmh/506}}.

\bibitem[Rei15]{Reid2015}
Alan~W. Reid.
\newblock Profinite properties of discrete groups.
\newblock In {\em Groups {S}t {A}ndrews 2013}, volume 422 of {\em London Math. Soc. Lecture Note Ser.}, pages 73--104. Cambridge Univ. Press, Cambridge, 2015.

\bibitem[Riv08]{Rivin2008}
Igor Rivin.
\newblock Walks on groups, counting reducible matrices, polynomials, and surface and free group automorphisms.
\newblock {\em Duke Math. J.}, 142(2):353--379, 2008.
\newblock \href{https://dx.doi.org/10.1215/00127094-2008-009}{{\ttfamily 10.1215/00127094-2008-009}}.

\bibitem[Sen06]{Seneta2006}
E.~Seneta.
\newblock {\em Non-negative matrices and {M}arkov chains}.
\newblock Springer Series in Statistics. Springer, New York, 2006.
\newblock Revised reprint of the second (1981) edition [Springer-Verlag, New York; MR0719544].

\bibitem[Ser77]{Serre1977}
Jean-Pierre Serre.
\newblock {\em Linear representations of finite groups}.
\newblock Graduate Texts in Mathematics, Vol. 42. Springer-Verlag, New York-Heidelberg, 1977.
\newblock Translated from the second French edition by Leonard L. Scott.

\bibitem[Ste72]{Stebe1972}
Peter~F. Stebe.
\newblock Conjugacy separability of groups of integer matrices.
\newblock {\em Proc. Am. Math. Soc.}, 32:1--7, 1972.
\newblock \href{https://dx.doi.org/10.2307/2038292}{{\ttfamily 10.2307/2038292}}.

\bibitem[Tur86]{Turaev1986}
V.~G. Turaev.
\newblock Reidemeister torsion in knot theory.
\newblock {\em Uspekhi Mat. Nauk}, 41(1(247)):97--147, 240, 1986.

\bibitem[Uek18]{Ueki2018}
Jun Ueki.
\newblock The profinite completions of knot groups determine the {A}lexander polynomials.
\newblock {\em Algebr. Geom. Topol.}, 18(5):3013--3030, 2018.
\newblock \href{https://dx.doi.org/10.2140/agt.2018.18.3013}{{\ttfamily 10.2140/agt.2018.18.3013}}.

\bibitem[Wil98]{Wilson1998}
John~S. Wilson.
\newblock {\em Profinite groups}, volume~19 of {\em London Mathematical Society Monographs. New Series}.
\newblock The Clarendon Press, Oxford University Press, New York, 1998.

\bibitem[Wil17]{Wilkes2017}
Gareth Wilkes.
\newblock Profinite rigidity for {Seifert} fibre spaces.
\newblock {\em Geom. Dedicata}, 188:141--163, 2017.
\newblock \href{https://dx.doi.org/10.1007/s10711-016-0209-6}{{\ttfamily 10.1007/s10711-016-0209-6}}.

\bibitem[Wil18a]{Wilkes2018}
Gareth Wilkes.
\newblock Profinite completions, cohomology and {JSJ} decompositions of compact 3-manifolds.
\newblock {\em N. Z. J. Math.}, 48:101--113, 2018.

\bibitem[Wil18b]{Wilkes2018b}
Gareth Wilkes.
\newblock Profinite rigidity of graph manifolds and {JSJ} decompositions of 3-manifolds.
\newblock {\em J. Algebra}, 502:538--587, 2018.
\newblock \href{https://dx.doi.org/10.1016/j.jalgebra.2017.12.039}{{\ttfamily 10.1016/j.jalgebra.2017.12.039}}.

\bibitem[Wil19]{Wilkes2019}
Gareth Wilkes.
\newblock Profinite rigidity of graph manifolds. {II}: {Knots} and mapping classes.
\newblock {\em Isr. J. Math.}, 233(1):351--378, 2019.
\newblock \href{https://dx.doi.org/10.1007/s11856-019-1908-0}{{\ttfamily 10.1007/s11856-019-1908-0}}.

\bibitem[WZ10]{WiltonZalesskii2010}
Henry Wilton and Pavel Zalesskii.
\newblock Profinite properties of graph manifolds.
\newblock {\em Geom. Dedicata}, 147:29--45, 2010.
\newblock \href{https://dx.doi.org/10.1007/s10711-009-9437-3}{{\ttfamily 10.1007/s10711-009-9437-3}}.

\bibitem[WZ17a]{WiltonZalesskii2017}
Henry Wilton and Pavel Zalesskii.
\newblock Distinguishing geometries using finite quotients.
\newblock {\em Geom. Topol.}, 21(1):345--384, 2017.
\newblock \href{https://dx.doi.org/10.2140/gt.2017.21.345}{{\ttfamily 10.2140/gt.2017.21.345}}.

\bibitem[WZ17b]{WiltonZalesskii2017b}
Henry Wilton and Pavel Zalesskii.
\newblock Pro-{{\(p\)}} subgroups of profinite completions of 3-manifold groups.
\newblock {\em J. Lond. Math. Soc., II. Ser.}, 96(2):293--308, 2017.
\newblock \href{https://dx.doi.org/10.1112/jlms.12067}{{\ttfamily 10.1112/jlms.12067}}.

\bibitem[WZ19]{WiltonZalesskii2019}
Henry Wilton and Pavel Zalesskii.
\newblock Profinite detection of 3-manifold decompositions.
\newblock {\em Compos. Math.}, 155(2):246--259, 2019.
\newblock \href{https://dx.doi.org/10.1112/S0010437X1800787X}{{\ttfamily 10.1112/S0010437X1800787X}}.

\bibitem[Zal22]{Zalesskii2022}
Pavel Zalesskii.
\newblock The profinite completion of relatively hyperbolic virtually special groups, 2022.
\newblock \href{https://arxiv.org/abs/2205.13201}{{\ttfamily arXiv:2205.13201 [math.GR]}}, arXiv:2205.13201 [math.GR].

\end{thebibliography}

\end{document}